\newcommand{\N}{\mathbb{N}}
\crefname{hypothesis}{Hypothesis}{Hypotheses}
\title{Kernel Interpolation of High Dimensional Scattered Data\thanks{\today.
\funding{The first two authors of the article are partially
	supported by the
	National Natural Science Foundation of China [Grant Nos. 61876133,11771012].}}}
\author{Shao-bo Lin\thanks{Center for Intelligent Decision-Making and Machine Learning, School of Management, Xi'an Jiaotong University, Xi'an 710049, China
  (\email{sblin1983@gmail.com}).}
\and Xiangyu Chang\thanks{Center for Intelligent Decision-Making and Machine Learning, School of Management, Xi'an Jiaotong University, Xi'an 710049, China
  (\email{xiangyuchang@xjtu.edu.cn}).}
\and Xingping Sun\thanks{ Department
	of Mathematics, Missouri State University, Springfield, MO 65897,
	USA
  (\email{XSun@MissouriState.edu}).}}
\newcommand{\Z}{ 
    \mathbb{Z}
}
\newcommand{\sfgrad}[1][]{ 
	\nabla_{*}
}
\newcommand{\sfcurl}[1][]{ 
	\mathbf{L}
}
\newcommand{\Jw}[1][\alpha,\beta]{ 
w_{#1}
}
\newcommand{\imat}[1][d]{ 
    I
}
\newcommand{\Lpw}[2][\Jw]{ 
\mathbb{L}_{#2}(#1)
}
\newcommand{\InnerLGb}[2][{\Jw[r-\frac{1}{2},r-\frac{1}{2}]}]{ 
\left(#2\right)_{\Lpw[{#1}]{2}}
}
\newcommand{\Diff}[2][t]{ 
\ifthenelse{\equal{#2}{1}}{\frac{\mathrm{d}}{\mathrm{d}#1}}{
\left(\frac{\mathrm{d}}{\mathrm{d}#1}\right)^{#2}}
}
\newcommand{\R}{\mathbb{R}}
\begin{document}

\maketitle

\begin{abstract}
 Data sites selected from modeling high-dimensional problems often appear scattered in non-paternalistic ways. Except for  sporadic-clustering at some spots, they become relatively far apart as the  dimension of the ambient space grows. These features defy any theoretical treatment that requires local or global quasi-uniformity of  distribution of data sites. Incorporating a recently-developed application of integral operator theory in machine learning, we propose and study in the current article a new framework to analyze kernel interpolation of high dimensional data, which features bounding stochastic approximation error by  the spectrum of the underlying 
 kernel matrix. Both theoretical analysis and numerical simulations show that spectra of kernel matrices are  reliable and stable  barometers  for gauging the performance of kernel-interpolation methods for high dimensional data.
\end{abstract}

\begin{keywords}
	High dimension, kernel interpolation, random sampling, stochastic approximation
\end{keywords}

\begin{AMS}
  68T05, 94A20, 41A35
\end{AMS}

\section{Introduction}
Let $\mathcal X$ be a compact domain in $\mathbb R^d$ with Lipschitz boundary. Let $K(\cdot,\cdot):{\mathcal X}\times {\mathcal X} \rightarrow \R$ be a continuous, symmetric and strictly  positive-definite kernel. Suppose that a data set $D:=\{(x_i,y_i)\}_{i=1}^m$ is given, in which $\Xi:=\{x_i\}_{i=1}^m $ are $m$ scattered points from $\mathcal{X}$, and $\{y_i\}_{i=1}^m$
are values of a target function $f$ taken on $\Xi $. In employing a kernel method to model a  real world problem,  one designs or adopts an algorithm to select an $f_D \in K_\Xi:={\rm span}\{K_{x_j}\}_{i=1}^m$, which represents ``faithfully" the  target function $f$ on $\mathcal{X}$. Here $K_{x_j}$ denotes the function: $\mathcal{X} \ni x \mapsto K(x_j, x)$. While the selection of an algorithm is subject to practical constraints and (possibly) subjective bias,  and criteria for the faithfulness of the representation are up to improvising, veracity,  and (even) debate, the approximation capability of the subspace $K_\Xi$ is always at the core of every theoretical consideration. In a reproducing kernel Hilbert Space (RKHS) setting (often referred to as a native space in the approximation theory community), the best approximation from the subspace $K_\Xi$ is achieved via interpolation. That is, one chooses $f_D \in K_\Xi, $ such that $f_D(x_j)=y_j, \; j=1,\ldots,m,$ which can be precisely written as the following:
\begin{equation}\label{interpolation-est}
f_D= \sum_{i=1}^m a_i K_{x_i},\quad \mbox{in which}\quad (a_1,\dots,a_m)^T=\mathbb K^{-1}y_D.
\end{equation}
Here $\mathbb K=(K(x_i,x_j))_{i,j=1}^m$ denotes the interpolation matrix (also called kernel matrix), and $y_D=(y_1,\dots,y_m)^T$. 
For some radial basis kernels, such as thin plate splines, approximation power can transcend a native space barrier so that a ``near best" approximation order be realized for functions from a larger (than the native space) RKHS.  For readers who are interested in the above native space approximation narrative, we make reference to \cite{madych-nelson, Narcowich1991,  Narcowich2002, Narcowich2004, Narcowich2006, Narcowich2007, Park1991,  Schaback1995, Schaback2000, Wendland2004}, and the bibliographies therein.

Hangelbroek et al (\cite{hangel-narc-sun-ward,hangel-narc-ward-1,hangel-narc-ward-2}) have recently made significant advancement in expanding the approximation power of interpolation beyond the native space setting, a gist of which will be summarized as follows. Let
\[ h_\Xi:=\max_{x\in\mathcal X}\min\limits_{1 \le j \le m} d(x_j,x), \quad q_\Xi=\frac12\min\limits_{j\neq k}d(x_j,x_k).
\]
The former is the Hausdorff distance between the point set $\Xi $ and $\mathcal X$, but is more commonly  referred to in the literature as  mesh norm or fill-distance; the latter is the separation radius (or half of the minimal separation) of the point set $\Xi $. If there is a constant $1 \le C_d$ depending only on $d$, such that $h_\Xi/q_\Xi \le C_d $, then we say that 
the point set $\Xi $ is quasi-uniformly distributed in $\mathcal X$. Global or local quasi-uniformity of a data set $\Xi$ is a crucial analytical tool  for meshless kernel methods to achieve their approximation goals. In particular, approximation orders of meshless kernel methods are mostly given in terms of  $h_\Xi$.  

Let $\Omega$ be a $d$-dimensional Riemannian manifold. Let $\Xi^*$ be a discrete subset of $\Omega$ that is quasi-uniformly distributed in $\Omega$. Let $\chi_\xi$ be the Lagrange interpolating function headquartered at $\xi$ and associated with the surface splines or the Mat\'ern kernel. That is $\chi_\xi (\zeta) = \delta_{\xi, \zeta}, \; \xi, \zeta \in \Xi^*,$  where $\delta_{\xi, \zeta}$ is the Kronecker delta. Hangelbroek, Narcowich, and Ward \cite{hangel-narc-ward-1} established the following remarkable inequality:
\begin{equation}\label{hangel}
|\chi_\xi(x)| \le C(d) \exp \left[ - \nu(d) \frac{{\rm dist} (x, \xi)}{h_{\Xi^*}}\right], \quad x \in \Omega.
\end{equation}
Here  $C(d), \nu(d)>0$ are constants depending only on $d$ and the underlying kernels. However, both constants grow at an exponential rate with respect to $d$. They further showed that the interpolation operator is bounded from $C_p(\Omega)$ to itself, where $C_p(\Omega)$ denotes the totality of continuous functions on $\Omega$ with polynomial growth at infinity. In a follow up article, Hangelbroek et al \cite{hangel-narc-sun-ward} proved that the $L^2$-projector is bounded under the $L^\infty$-norm. 
Leveraging the exponential decay of $\chi_\xi$ away from the base point $\xi$ as shown in inequality \eqref{hangel}, Hangelbroek, Narcowich, and Ward \cite{hangel-narc-ward-2} and Fuselier et al \cite{fuselier-hangel-narc-ward} articulated
the notion ``local density function"  in which Lagrange interpolating functions are built on data sets whose cardinality are of logarithmic orders, and yet the interpolation scheme still achieves desirable approximation orders.  This has vastly reduced computational complexity, and enhanced the efficiency of many meshless methods in solving partial differential equations on domains of  relatively low dimensions; see \cite{hangel-narc-rieger-ward}.

Modern-day data scientists are encountering an onslaught of real world problems in which massive data sets are involved. In many cases,  data seem extremely disorganized and even outright chaotic, which not only poses challenges but also provides opportunities for data scientists to figure out ways to store, communicate and analyze them. A persisting challenge stems from experiences in dealing with the enormous number of features (variables) data sets exhibit.  For example,  microarrays for gene expression \cite{Baldi2011}  contain thousands of samples, each of which  in turn has tens of thousands of genes. Another well-known example is the natural image data set - ImageNet \cite{Deng2009}, which gathers about 14 million natural images classified in more than 20,000 categories. Each image has the original resolution with \(469\times 387=1,823,003\) pixels (dimensions). Our numerical simulations show that high dimensional data sites  may exhibit sporadic-clustering  at some spots, but are mostly scattered in non paternalistic ways and relatively far apart from each other, which defies any attempt to analyze them using the likes of local density functions.   

Suppose that mass is uniformly distributed on $[0,1]^d$, the unit cube in $\R^d$. Then for any fixed $0< \epsilon < \frac12,$ and a sufficiently large $d$,  the law of large numbers shows that the mass of $[0,1]^d$ is mostly concentrated in an $\epsilon$-neighbourhood of the hyperplane $\mathcal{L}: x_1 + \cdots + x_d = \frac{d}{2}$, which happens to be the orthogonal bisector of the main diagonal of $[0,1]^d$ (which has length $\sqrt{d}$).  Meanwhile the cube $[\epsilon, 1-\epsilon]^d$ has volume $(1-2\epsilon )^d$, which approaches zero exponentially fast with $d$. Thus, the mass of $[0,1]^d$ is mostly concentrated on the intersection of $\epsilon$-neighbourhood of the hyperplane $\mathcal{L}$ and the set $[0,1]^d \setminus [\epsilon, 1-\epsilon]^d$.  Figure \ref{fig:curse_dimensionality} depicts the situation for $d=3$.  

\begin{figure}[ht]
	\centering
	\includegraphics[scale=0.13]{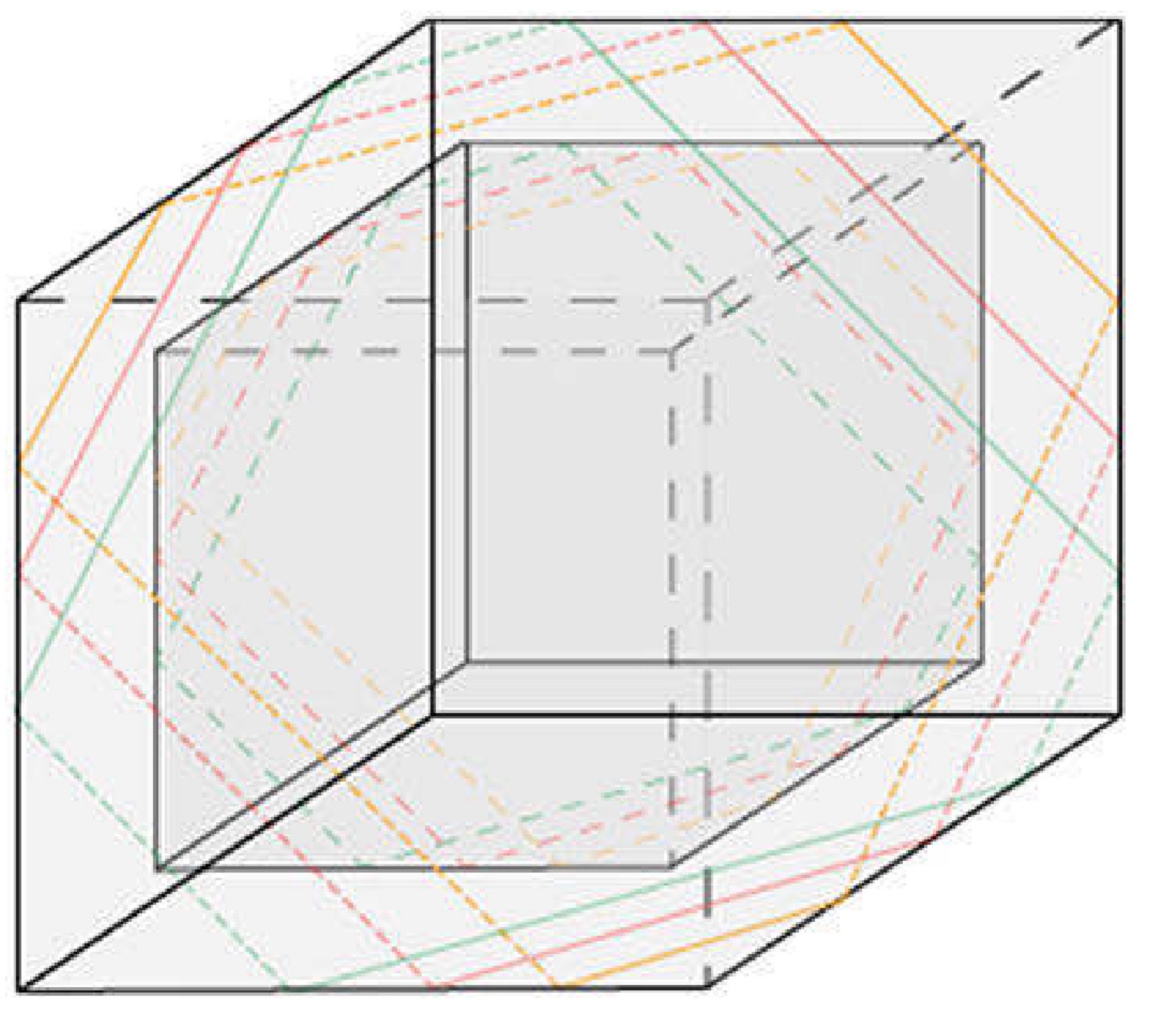}
	\caption{Concentration of mass on the unit cube.}\label{fig:curse_dimensionality}
\end{figure}

Figure \ref{subfig:sr} exhibits the increase of the separation radius $q_\Xi$ with $d$ for random samplings of $500$ points from $[0,1]^d$ (according to the uniform distribution). In Figure \ref{subfig:sr}, each red dot indicates the mean values (in $10$ trials) of separation radius for each dimension $d$ in the range $2 \le d \le 100$. Each red dot is accompanied with an error bar, indicating the range of variation of separation radius from these trials as determined by the double standard deviation. To further demonstrate the potency of our main methodology undertaken here, we have designed and carried out the following large scale numerical simulation.  For each given dimension $d$ in the range $2 \le d \le 100$, we first randomly select (according the uniform distribution on $[0,1]^d$) $500$ points $x_1,\ldots, x_{500}.$ We then calculate
the condition number (associated with the $\ell^2$-norm) of the corresponding kernel matrix $\mathbb{K}:=(G(x_i,x_j))\in\mathbb{R}^{500\times500}$, where $G(x,y)=\exp\{-\|x-y\|^2/2\}.$  Figure \ref{subfig:cn} shows that  the condition  number of $\mathbb{K}$ decays exponentially fast to one as $d$ increases from  $2$ to $ 100$. 
Similar to Figure 2 (a), each red dot indicates the mean value (in 10 trials) of the condition number of $\mathbb{K}$ for the corresponding dimension $d$, and the accompanied error bar indicates the double standard deviation. \footnote{We have repeated the same simulation many times, and got more or less the same result. This has motivated us to establish probabilistic lower bounds of $q_X$ in terms of $m$ and $d$; see Lemma \ref{Lemma:separation-for-gaussian} (in Appendix C). }

\begin{figure*}[!htb]
	\centering
	\subfigure[]{\includegraphics[scale=0.3]{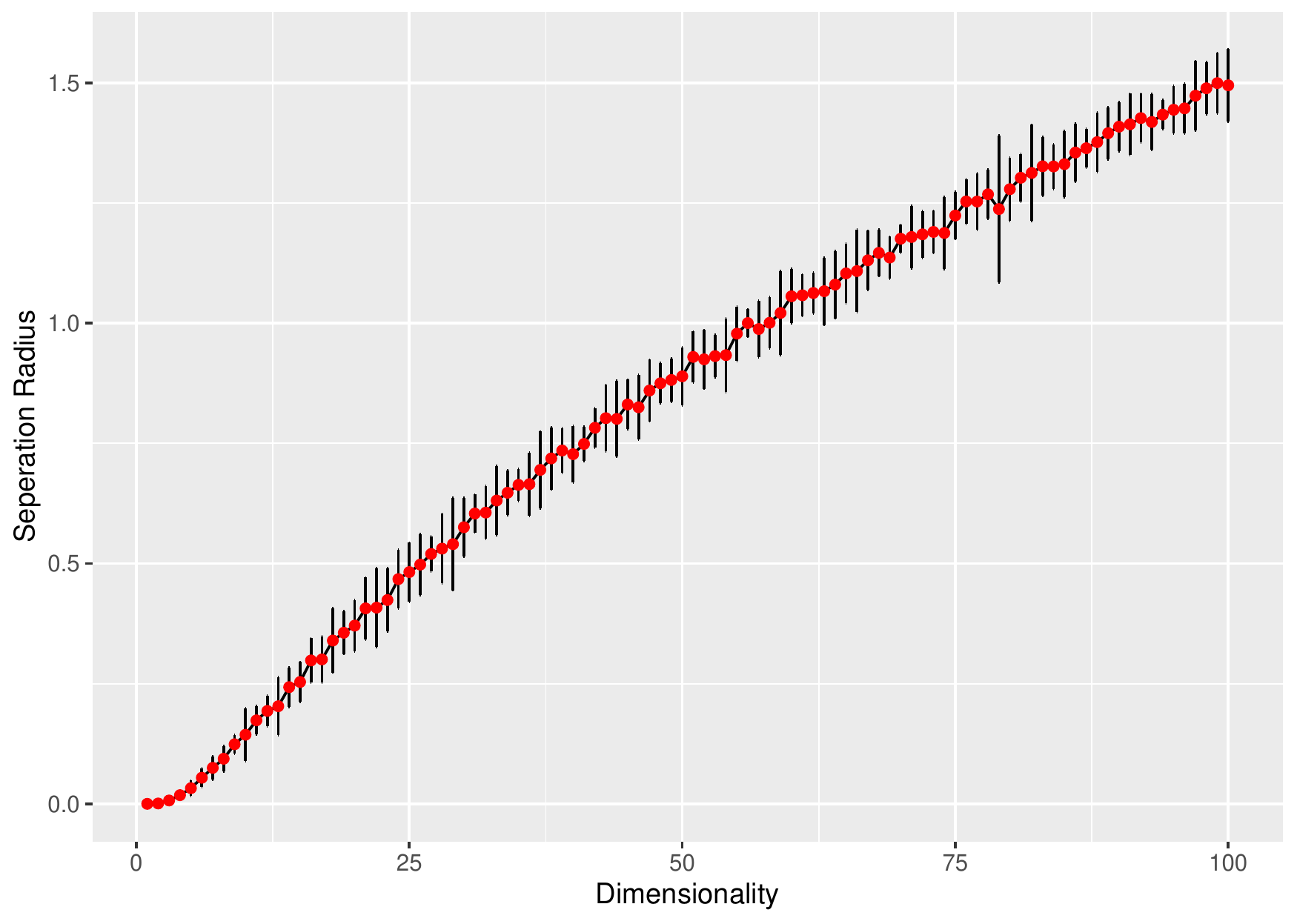}
		\label{subfig:sr}
	} 
	\subfigure[]{\includegraphics[scale=0.3]{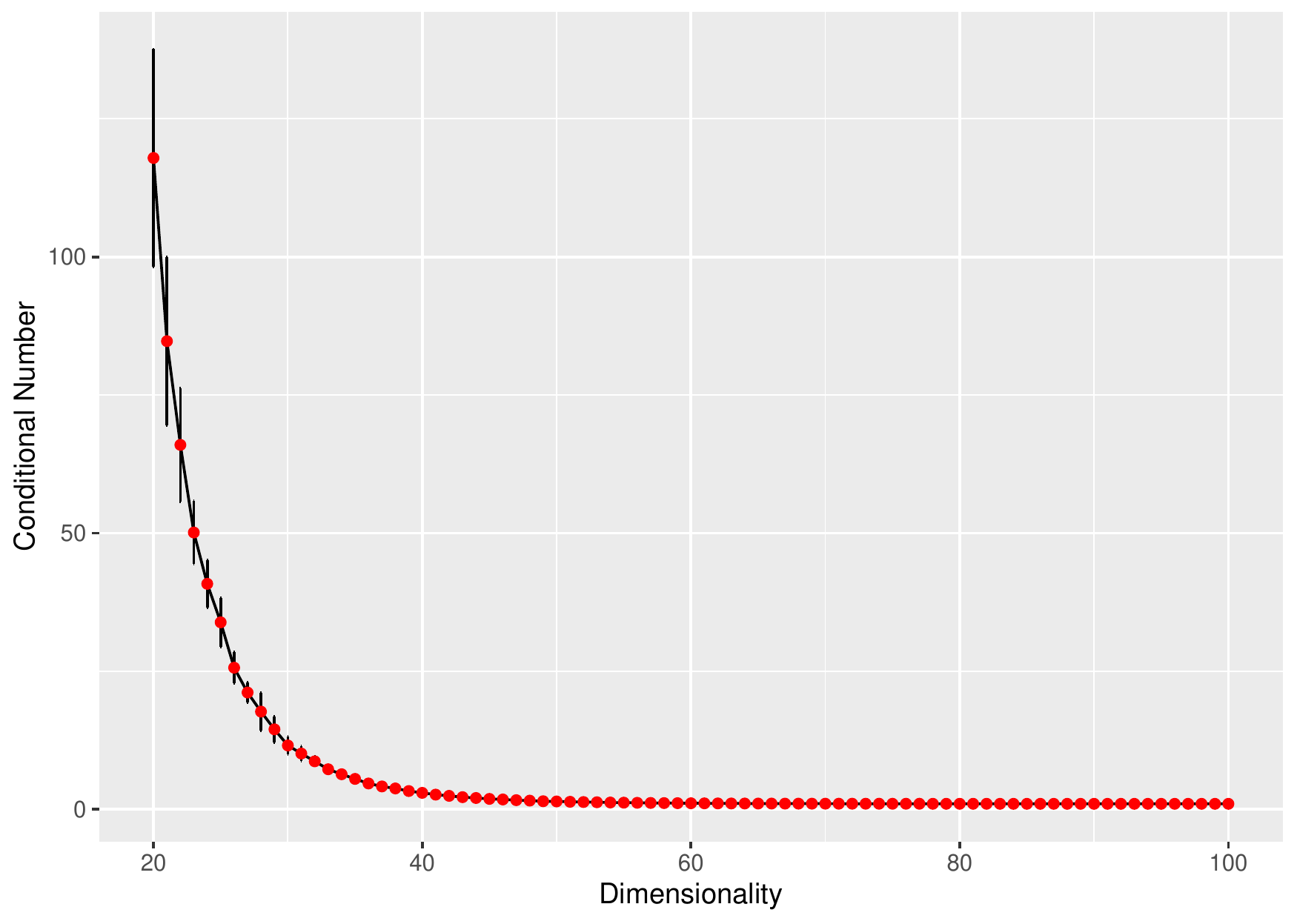}
		\label{subfig:cn}}
	\caption{The change of random separation radii and condition numbers as dimension increases.}
	\label{fig:motivation_example}
	\vspace{0.2in}
\end{figure*}

The main goal of the current paper is to propose and study a new stochastic framework for kernel interpolation of  high dimensional data.  Inspired by our numerical simulation results and Peetre's idea \cite{peetre} in the study of interpolation of operators, we introduce two quantities (parameters) in Equations (\ref{Def.Q}) and (\ref{Def.R}), and use them to bound the error of stochastic approximation for the underlying kernel interpolants. By fine-tuning the two parameters, we observe a $K$-functional of a hybrid (continuous and discrete) nature. We think the mathematical lineage interesting, and will investigate deeper connections in the future.

Based on the recently-developed integral operator theory \cite{Smale2007,Lin2017,Lin2019}, we first express the approximation error as the difference  between integral operators and the corresponding empirical discretizations.  We then formulate the difference in terms of the spectrum of the kernel matrix. Finally, we employ pertinent concentration inequalities \cite{Pinelis1994} in Banach spaces to derive the desired error estimates (Theorems \ref{Theorem:Stability-via-eigenvalue} and \ref{Theorem:native-barrier-without-noise-eigenvalue}). Working behind the scene are spectrum estimates of kernel matrix \cite{Ball1992, Narcowich1991,Schaback1995,Wendland2004,Elkaroui2010,Belkin2018,Liang2020}, of which we mention particularly that in Ball's estimate \cite{Ball1992} of the smallest eigenvalue of distance matrices in terms of the minimal separation between the data sites, the constant grows algebraically with dimension. 

To demonstrate the versatility of our method, we derive stochastic approximation errors for kernel interpolation under three different computing environments. Firstly, we establish a close relationship between approximation error and spectrum of the kernel matrix for noise-free data, that is, $D=\{(x_i,y_i)\}_{i=1}^m$ with $y_i=f^*(x_i)$ for $f^*\in\mathcal H_K$, where $\mathcal H_K$ is the RKHS associated with $K$. Secondly, we study the performance of kernel interpolation with the presence of noise. Under the circumstance, the data $\{y_i\}_{i=1}^m$ are of the form: $y_i=f^*(x_i)+\varepsilon_i$ in which $f^*\in\mathcal H_K$ and  $\varepsilon_i$ indicate some white noise. Our result shows that there is a trade-off between accuracy of approximation and stability of the underlying algorithm in terms of kernel selections. Finally, we investigate the approximation capability of kernel interpolation beyond the native space setting, assuming that data come from a target function outside of the native space, which we will refer to as ``trans-native space data". This  is figuratively called ``conquering  the native space barrier'' in \cite{Narcowich2007}.

The rest of paper is organized as follows. In Section \ref{Sec.operator.difference}, we outline the integral operator approach, which is a theoretical pillar of the current article,  and establish the relationship between the approximation error and the norm estimate of the corresponding integral operator.  In Section \ref{Sec.Random}, we formulate the approximation error of kernel interpolation in terms of the spectrum of kernel matrix. In Section \ref{sec.spectrum}, we give spectrum estimates for some widely used kernels and remarks on the implication for the ensuing sampling and interpolating operations. In Section \ref{Sec.Experiment}, we report some  numerical simulation results.  In Section \ref{Sec.proof}, we present the proofs of our results.
Not to distract readers' attention from the main narrative, we collect some frequently-used results in three appendices for easy referencing.

\section{Error Analysis for Kernel Interpolation via Finite Differences of Operators}\label{Sec.operator.difference}

This section features a novel integral operator approach  to analyze the approximation performance of  kernel interpolation. Prototypical ideas of this approach already appeared in \cite{Smale2004,Smale2005,Lin2017}.
Let $\rho_X$ be a probability measure on $\mathcal X$.  Denote by $L^2_{\rho_X}$ the space of $\rho_X$-square-integrable functions endowed with norm $\|\cdot\|_\rho$. Define the 
integral operator $L_K:\mathcal H_K\rightarrow\mathcal H_K$ by
\begin{equation}\label{integral-1}
L_K(f) =\int_{\mathcal X} K_x f(x)d\rho_X, \qquad f\in {\mathcal
	H}_K.
\end{equation}
Let $\mathcal L_K: L_{\rho_X}^2\rightarrow L_{\rho_X}^2$ be the integral operator defined by
$$
\mathcal L_Kf=\int_{\mathcal X}f(x)K_xd\rho_X,   f\in L_{\rho_X}^2.
$$
We have of course that $\mathcal L_Kf=  L_Kf$ for  $f\in\mathcal H_K$.
However, one can neither consider $\mathcal L_K$ an extension of $L_K$ from $H_K$ to $L_{\rho_X}^2$ nor $L_K$ a restriction of $\mathcal L_K$ on $H_K$, as the norms of two spaces are not equivalent (when restricted to $L_K$). In fact, for an arbitrary $f\in L_{\rho_X}^2$, we have (see \cite{Caponnetto2007})
that $
\|f\|_\rho=\|\mathcal L_K^{1/2}f\|_K$, where $\mathcal L_K^r\ (r >0)$  is
defined by spectral calculus and $\|f\|_K$ denotes the norm of the RKHS $\mathcal H_K$.   
Let $S_{D}:\mathcal H_K\rightarrow\mathbb R^{m}$ be the sampling
operator defined by
$$
S_{D}f:=(f(x_i))_{i=1}^m.
$$
Its scaled adjoint $S_D^T:\mathbb R^{m}\rightarrow
\mathcal H_K$   is
given by
$$
S_{D}^T{\bf c}:=\frac1{m}\sum_{i=1}^{m}c_iK_{x_i},\qquad {\bf
	c}:=(c_1,c_2,\dots,c_{m})^T
\in\mathbb
R^{m}.
$$
Then, we have
\begin{equation}\label{operator-matrix}
\frac1m\mathbb K=S_DS_D^T.
\end{equation}
This together with (\ref{interpolation-est}) implies
\begin{equation}\label{operator-rep1}
f_D=S_D^T(S_DS_D^T)^{-1}y_D.
\end{equation}
We carry out error analysis for the following three computing environments: (i) noise-free  data in the native space setting; (ii) noisy-data in the native space setting; (iii) trans-native space data. En route we will frequent some definitions and theorems pertaining to operator theory, which  we collect in  Appendix A
for easy referencing.

\subsection{Kernel interpolation of noise-free data}
In this subsection, we study the approximation error of $f_D$ defined by (\ref{interpolation-est}) when the data are noise-free, i.e., there exists a  function $f^*\in L^2_{\rho_X}$ such that
\begin{equation}\label{model-1}
y_i=f^*(x_i).
\end{equation}
Define the empirical version of the integral operator $L_K$ to be
$$
L_{K,D}f:=S_D^TS_Df=\frac1{m}\sum_{i=1}^mf(x_i)K_{x_i}.
$$
Since $K$ is strictly positive definite,  $L_{K,D}$ is a  positive operator  of rank   $m$.  Denote by $\{(\sigma_\ell^D,\phi_\ell^D)\}_{\ell=1}^\infty$   the normalized eigen-pairs of $L_{K,D}$ with
$\sigma_1^D\geq\sigma_2^D\geq\dots\geq\sigma_m^D>0$ and $\sigma_{m+j}^D=0$ for $j\geq1$. 
In the following proposition, we show that $f_D-f^*$ is in the null space of the operator $L_{K,D}$.
\begin{proposition}\label{Proposition:null-space}
If $D=\{(x_i,y_i)_{i=1}^m$ satisfies  \eqref{model-1}, then $f^*-f_D$ is in the null space of the operator $L_{K,D}$, i.e.
\begin{equation}\label{Null-space}
L_{K,D}(f^*-f_D)=0.
\end{equation}
\end{proposition}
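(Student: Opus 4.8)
The plan is to exploit the factorization $L_{K,D} = S_D^T S_D$ together with the interpolation condition satisfied by $f_D$. Since $L_{K,D}(f^* - f_D) = S_D^T\bigl(S_D(f^* - f_D)\bigr)$, it suffices to prove that $f^* - f_D$ lies in the null space of the sampling operator $S_D$, i.e.\ that $S_D f^* = S_D f_D$ as vectors in $\mathbb R^m$. In other words, the whole proposition should reduce to the elementary statement that $f_D$ reproduces the sampled values of $f^*$.

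First I would compute $S_D f^*$. By the noise-free model \eqref{model-1}, $S_D f^* = (f^*(x_i))_{i=1}^m = (y_i)_{i=1}^m = y_D$. Next I would compute $S_D f_D$ using the operator representation \eqref{operator-rep1}, namely $f_D = S_D^T(S_D S_D^T)^{-1}y_D$. Applying $S_D$ and cancelling gives $S_D f_D = (S_D S_D^T)(S_D S_D^T)^{-1}y_D = y_D$; here the invertibility of $S_D S_D^T = \frac1m \mathbb K$, guaranteed by \eqref{operator-matrix} together with the strict positive-definiteness of $K$, is precisely what legitimizes the cancellation. (Equivalently, one may verify directly that $\mathbb K \mathbb K^{-1} y_D = y_D$, which is just the assertion that the coefficients $\mathbf a = \mathbb K^{-1} y_D$ make $f_D$ interpolate the data.)

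Combining the two computations yields $S_D(f^* - f_D) = y_D - y_D = 0$, whence $L_{K,D}(f^* - f_D) = S_D^T \cdot 0 = 0$, as claimed. There is no serious analytical obstacle in this argument; the only point requiring a moment's care is that $L_{K,D}$ and $S_D$ act on functions solely through their values at the data sites, so one should note that $f^*(x_i)$ is well defined (indeed equal to $y_i$ by hypothesis), which makes $S_D f^*$ and $L_{K,D} f^*$ meaningful even though $f^*$ need only belong to $L^2_{\rho_X}$ rather than to $\mathcal H_K$. The conceptual content of the proposition is simply that the empirical operator $L_{K,D}$ perceives a function only through its samples on $\Xi$, and $f_D$ agrees with $f^*$ there by construction.
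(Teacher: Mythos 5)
Your proof is correct and rests on the same algebraic core as the paper's: the representation $f_D = S_D^T(S_DS_D^T)^{-1}y_D$ together with $y_D = S_Df^*$ from \eqref{model-1}, followed by the cancellation $(S_DS_D^T)(S_DS_D^T)^{-1}=I$ justified by strict positive definiteness of $K$. The paper additionally introduces an SVD of $S_D$ and the projection $P_m = S_D^T(S_DS_D^T)^{-1}S_D$ (scaffolding it reuses in later proofs, e.g.\ of Theorem \ref{Theorem:noise-less-operator}), but that machinery is inessential for this proposition, and your direct route via $S_D(f^*-f_D)=0$ followed by applying $S_D^T$ is the same computation in a cleaner order.
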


  Proposition \ref{Proposition:null-space} implies for any $\nu\geq 1/2$ that
\begin{eqnarray}\label{Interpolation-property}
    \|L_{K,D}^{\nu}(f^*-f_D)\|_K^2
    &=&
    \langle L_{K,D}^{\nu}(f^*-f_D), L_{K,D}^{\nu}(f^*-f_D)\rangle_K\nonumber\\
    &=&
    \langle L_{K,D}(f^*-f_D), L_{K,D}^{2\nu-1}(f^*-f_D)\rangle_K
    =0.
\end{eqnarray}
To quantitatively describe the approximation performance of $f_D$, some regularity of the target function $f^*$ should be imposed. We adopt the widely used regularity assumption \cite{Caponnetto2007,Lin2017,Lin2018} via the integral operator $\mathcal L_K$.
\begin{equation}\label{regularity-assump}
f^*=\mathcal L_K^r h^*,\qquad \ h^*\in L_{\rho_X}^2,\ r\geq0.
\end{equation}

Let $\{(\sigma_\ell,\phi_\ell)\}_{\ell=1}^\infty$ be the normalized eigen-pairs of $  L_{K}$ with
$\sigma_1\geq\sigma_2\geq\dots\geq0$.   Then Aronszajn thoerem \cite{aronszajn1950theory} shows 
 that  $\{(\sigma_\ell,\phi_\ell/\sqrt{\sigma_\ell})\}_{\ell=1}^\infty$ consists an normalized eigen-paris of $\mathcal L_K$. Furthermore,
the Mercer expansion \cite{aronszajn1950theory} shows 
$$
    K(x,x')=\sum_{\ell=1}^\infty \phi_\ell(x)\phi_\ell(x')
           =\sum_{\ell=1}^\infty \sigma_\ell\frac{\phi_\ell(x)}{\sqrt{\sigma_\ell}}\frac{\phi_\ell(x')}{\sqrt{\sigma_\ell}}
$$
Then
the regularity condition (\ref{regularity-assump}) is equivalent to
\begin{equation}\label{Regularity-1111}
      f^*=\mathcal L_K^rh^*=\sum_{\ell=1}^\infty \sigma^{r-1/2}_\ell\langle  h^*,\phi_\ell/\sqrt{\sigma_\ell}\rangle_\rho  \phi_\ell.
\end{equation}
This means that the parameter $r$  in (\ref{regularity-assump}) determines the smoothness of the target functions. In particular, (\ref{regularity-assump}) with $r=1/2$ implies $f^*\in\mathcal H_K$,
(\ref{regularity-assump}) with  $r<1/2$ implies $f^*\notin\mathcal H_K$ and  (\ref{regularity-assump}) with $r=0$ yields $f_\rho\in L_{\rho_X}^2$. 
 Generally speaking, the larger value of $r$ is, the smoother the function $f^*$ is. There is a rich literature devoted to characterizing the smoothness of functions in terms of the decay rate of eigenvalue sequence of the associated compact positive operators;  see Weyl \cite{weyl}, K\"uhn\cite{kuhn2}, Reade \cite{reade-2}, and the references therein. 

As the regularity is described via $L_K$ while $f_D-f^*$ is in the null space of $L_{K,D}$, we introduce two quantities to measure the difference between $L_K$ and $L_{K,D}$.  
For any $\lambda>0$, write
\begin{eqnarray}
\mathcal Q_{D,\lambda} &:=& \left\|(L_{K,D} + \lambda I)^{-1/2}(L_{K}+\lambda
I)^{1/2} \right\|,   \label{Def.Q}\\
\mathcal R_D &:=&
\|L_{K,D}-L_K\|_{HS}, \label{Def.R}
\end{eqnarray}
where $\|A\|$ and $\|A\|_{HS}$ denote, respectively, the spectral norm and  Hilbert-Schmidt norm of the   operator $A$.
We impose the interpolation condition (\ref{model-1}), and use    two quantities: $\mathcal Q_{D,\lambda}$ and $\mathcal R_D$  to bound the approximation error of kernel interpolants. A suitable combination of the two quantities above can be loosely interpreted  as a hybrid (discrete and continuous) version of the ``$K$-functional'' introduced by Peetre \cite{peetre} in the context of  operator interpolations.
Based on the above preliminaries, we are in a position to present our error estimate in the following theorem.

\begin{theorem}\label{Theorem:noise-less-operator}
Let $0<u\leq 1/2$.	Suppose that $D=\{(x_i,y_i)\}_{i=1}^m$ satisfies (\ref{model-1}) and  that  $f^*$ satisfies (\ref{regularity-assump}) with $r\geq 1/2$. Then we have
	\begin{equation}\label{noise-less-operator}
	\|L_K^u(f_D-f^*)\|_K
	\leq
	\min_{\lambda>0} \left\{\begin{array}{ll}
	\lambda^{r+u-1/2}\mathcal Q_{D,\lambda}^{2r+2u-1}\|h^*\|_\rho, & \quad 1/2\leq r\leq 1,\\
	(r-1/2)\kappa^{r-3/2}\|h^*\|_\rho\lambda^u\mathcal Q_{D,\lambda}\mathcal R_D,  & \quad r>1,
	\end{array}
	\right.
	\end{equation}
where $\kappa=\sqrt{\sup_{x,x'\in\mathcal X}K(x,x')}$.
\end{theorem}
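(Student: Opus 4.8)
The plan is to rewrite the interpolation error through the $\mathcal{H}_K$-orthogonal projection onto $K_\Xi$ and then reduce the whole estimate to two operator-norm bounds, one carrying $\mathcal Q_{D,\lambda}$ and (for $r>1$) one carrying $\mathcal R_D$. First I would record the structural facts. By \eqref{operator-rep1} and the noise-free model \eqref{model-1}, $f_D = S_D^T(S_DS_D^T)^{-1}S_D f^* = P f^*$, where $P := S_D^T(S_DS_D^T)^{-1}S_D$ is the orthogonal projection of $\mathcal H_K$ onto $K_\Xi$. Since $L_{K,D}=S_D^TS_D$ has range $K_\Xi$ (as $K$ is strictly positive definite), a short computation identifies $I-P$ with the orthogonal projection onto the null space of $L_{K,D}$, in agreement with Proposition \ref{Proposition:null-space}; consequently $f_D-f^*=-(I-P)f^*$ and, because $L_{K,D}$ and $P$ commute and $L_{K,D}$ annihilates the range of $I-P$, one has $(L_{K,D}+\lambda I)^\theta(I-P)=\lambda^\theta(I-P)$ for every real $\theta$. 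Using the Mercer/Aronszajn expansion behind \eqref{Regularity-1111}, I would restate \eqref{regularity-assump} in the native space as $f^*=L_K^{r-1/2}\tilde h$ with $\tilde h\in\mathcal H_K$ and $\|\tilde h\|_K=\|h^*\|_\rho$ (legitimate since $r\ge 1/2$). Finally, as $0<u\le 1/2$ and $L_K\le L_K+\lambda I$, operator monotonicity of $t\mapsto t^{2u}$ gives $\|L_K^u(f_D-f^*)\|_K\le\|(L_K+\lambda I)^u(f_D-f^*)\|_K$, so it suffices to bound the latter and minimize over $\lambda$.

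For $1/2\le r\le 1$, set $A=(L_K+\lambda I)^{1/2}$, $B=(L_{K,D}+\lambda I)^{1/2}$, $s=u$, $t=r-1/2$. I would first trade $L_K^{t}$ for $(L_K+\lambda I)^{t}$: the commuting correction $(L_K+\lambda I)^{-t}L_K^{t}$ is a function of $L_K$ of norm $\le 1$, so the estimate reduces to the operator norm of $A^{2s}(I-P)A^{2t}$. The commutation relation above yields the exact identity
\[
A^{2s}(I-P)A^{2t}=\lambda^{s+t}\,(A^{2s}B^{-2s})\,(I-P)\,(B^{-2t}A^{2t}).
\]
Applying the Cordes inequality $\|P_0^\theta Q_0^\theta\|\le\|P_0Q_0\|^\theta$ (valid for $P_0,Q_0\ge 0$ and $0\le\theta\le 1$) with $\theta=2s$ and $\theta=2t$ — both $\le 1$ precisely because $u\le 1/2$ and $r\le 1$ — bounds the two outer factors by $\mathcal Q_{D,\lambda}^{2s}$ and $\mathcal Q_{D,\lambda}^{2t}$, since $\|AB^{-1}\|=\|B^{-1}A\|=\mathcal Q_{D,\lambda}$. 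Hence $\|A^{2s}(I-P)A^{2t}\|\le\lambda^{s+t}\mathcal Q_{D,\lambda}^{2(s+t)}=\lambda^{r+u-1/2}\mathcal Q_{D,\lambda}^{2r+2u-1}$, and multiplying by $\|\tilde h\|_K=\|h^*\|_\rho$ gives the first branch.

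For $r>1$ we have $t=r-1/2>1/2$, so Cordes no longer controls $A^{2t}B^{-2t}$; instead I would use that $I-P$ annihilates the range of $L_{K,D}$. Since $(I-P)L_{K,D}^{r-1/2}=0$, the error may be rewritten for free as $f_D-f^*=-(I-P)\bigl(L_K^{r-1/2}-L_{K,D}^{r-1/2}\bigr)\tilde h$, and I would split
\[
\|(L_K+\lambda I)^u(f_D-f^*)\|_K\le\|(L_K+\lambda I)^u(I-P)\|\;\bigl\|L_K^{r-1/2}-L_{K,D}^{r-1/2}\bigr\|\;\|\tilde h\|_K.
\]
For the first factor, factoring $(L_K+\lambda I)^u=(L_K+\lambda I)^{u-1/2}(L_K+\lambda I)^{1/2}$ and using $\|(L_K+\lambda I)^{u-1/2}\|\le\lambda^{u-1/2}$ together with $(L_K+\lambda I)^{1/2}(I-P)=AB^{-1}\lambda^{1/2}(I-P)$ gives the clean bound $\|(L_K+\lambda I)^u(I-P)\|\le\lambda^u\mathcal Q_{D,\lambda}$. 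For the second factor I would dominate the operator norm by the Hilbert–Schmidt norm and invoke the operator-Lipschitz property of $t\mapsto t^{r-1/2}$ in the Hilbert–Schmidt norm: as the spectra of $L_K$ and $L_{K,D}$ lie in $[0,\kappa^2]$, this produces a bound of the form $\bigl\|L_K^{r-1/2}-L_{K,D}^{r-1/2}\bigr\|\le(r-1/2)\,\kappa^{2r-3}\,\|L_K-L_{K,D}\|_{HS}=(r-1/2)\kappa^{2r-3}\mathcal R_D$. Assembling the three factors yields the second branch, up to reconciling the displayed power of $\kappa$.

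The delicate step is this last Hilbert–Schmidt operator-Lipschitz estimate for the fractional power. The map $t\mapsto t^{r-1/2}$ is genuinely Lipschitz on $[0,\kappa^2]$ only when $r\ge 3/2$; for $1<r<3/2$ its derivative blows up at the origin, so the bound cannot be read off from a global Lipschitz constant and must instead be obtained by splitting off the small eigenvalues of $L_{K,D}$ (or through a H\"older-type refinement), while checking that the resulting constant is uniform in $\lambda$ and that the exponent of $\kappa$ matches the stated $\kappa^{r-3/2}$. This interplay between the non-Lipschitz behaviour of the power function near zero and the Hilbert–Schmidt control furnished by $\mathcal R_D$ is where I expect the real work to lie; once the commutation identity $(L_{K,D}+\lambda I)^\theta(I-P)=\lambda^\theta(I-P)$ and the Cordes inequality are in hand, the remaining manipulations are routine.
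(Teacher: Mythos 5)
Your argument follows essentially the same route as the paper's proof: write $f_D-f^*=-(I-P_m)f^*$, reduce the first branch to Cordes estimates against $\mathcal Q_{D,\lambda}$, and for $r>1$ insert $L_{K,D}^{r-1/2}$ for free (using $(I-P_m)L_{K,D}^{r-1/2}=0$) and bound $\|L_K^{r-1/2}-L_{K,D}^{r-1/2}\|_{HS}$ by an operator-Lipschitz property, producing $\mathcal R_D$. The only packaging difference is that you work from the exact identity $(L_{K,D}+\lambda I)^\theta(I-P_m)=\lambda^\theta(I-P_m)$ together with operator monotonicity of $t\mapsto t^{2u}$, where the paper routes the same estimates through Proposition \ref{Proposition:operator-bound-Pm} (the positivity argument giving $\|L_K^v(I-P_m)\|\le\lambda^{1/2}\|L_K^v(L_{K,D}+\lambda I)^{-1/2}\|$); both give identical exponents, and your bound $\|(L_K+\lambda I)^u(I-P_m)\|\le\lambda^u\mathcal Q_{D,\lambda}$ in the second branch is if anything tidier, since the paper's Cordes route naturally yields $\lambda^u\mathcal Q_{D,\lambda}^{2u}$ and then silently upgrades $\mathcal Q_{D,\lambda}^{2u}$ to $\mathcal Q_{D,\lambda}$.

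About the step you flag as delicate: you are right that it is the crux, but the paper does not do the extra work you anticipate. It simply invokes its Appendix A inequality \eqref{lip-operator} with $\nu=r-1/2$, whose linear branch $\|A^\nu-B^\nu\|_{HS}\le\nu\kappa^{\nu-1}\|A-B\|_{HS}$ is stated (and true) only for $\nu\ge1$, i.e.\ $r\ge3/2$. For $1<r<3/2$ the Appendix supplies only the H\"older bound $\|A^\nu-B^\nu\|_{HS}\le\|A-B\|_{HS}^\nu$, which would place $\mathcal R_D^{r-1/2}$, not $\mathcal R_D$, in the second branch of \eqref{noise-less-operator}; and no linear bound can hold uniformly for $\nu<1$, as rank-one examples $A=\epsilon\, P_1$, $B=0$ show. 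So the gap you identified is a gap in the paper's own proof rather than a defect of your argument relative to it (the paper seems half-aware of this: the constant $C_1$ defined at the end of the proof of Theorem \ref{Theorem:noise-less-eigenvalue} splits cases at $r=3/2$, not at $r=1$). Likewise your constant $(r-1/2)\kappa^{2r-3}$ is the correct one: since $\|L_K\|,\|L_{K,D}\|\le\kappa^2$, the Lipschitz constant of $t\mapsto t^{r-1/2}$ on the relevant spectral interval is $(r-1/2)(\kappa^2)^{r-3/2}$, and the paper's $(r-1/2)\kappa^{r-3/2}$ comes from taking the spectral bound to be $\kappa$ instead of $\kappa^2$. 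In short: your proof is complete and matches the paper for $1/2\le r\le1$ and for $r\ge3/2$, and the regime $1<r<3/2$ that you left open is equally open in the paper.
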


  Since $\mathcal X$ is compact, it is easy to check that $\kappa<\infty$, which implies $\|f\|_\infty\leq \kappa\|f\|_K$. This together with $\|f\|_\rho=\|\mathcal L_K^{1/2}f\|_K$ shows that \eqref{noise-less-operator} includes error estimates under both  $L_{\rho_X}^2$ norm and $L^\infty$ norm for $u=1/2$ and $u=0$, respectively.   In particular, 
 Theorem \ref{Theorem:noise-less-operator} with $u=1/2$ and $r=1/2$ yields the following corollary.
 
\begin{corollary}\label{Corollary:rho-noiseless-operator}
Suppose that $D=\{(x_i,y_i)\}_{i=1}^m$ satisfies (\ref{model-1}) and   $f^*\in\mathcal H_K$. Then we have
$$
	\|f_D-f^*\|_\rho
	\leq
	\min_{\lambda>0}\lambda^{1/2}\mathcal Q_{D,\lambda}\|h^*\|_\rho. 
$$
\end{corollary}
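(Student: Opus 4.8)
The plan is to read off the corollary as the boundary case $u=\tfrac12$, $r=\tfrac12$ of Theorem \ref{Theorem:noise-less-operator}, and then convert the $\mathcal H_K$-norm estimate it produces into a $\|\cdot\|_\rho$-estimate using the identity recorded in Section \ref{Sec.operator.difference}. First I would check that the hypotheses line up: the assumption $f^*\in\mathcal H_K$ is precisely the regularity condition (\ref{regularity-assump}) with $r=\tfrac12$ (choosing $h^*\in L^2_{\rho_X}$ with $f^*=\mathcal L_K^{1/2}h^*$), and $r=\tfrac12$ sits in the first branch $\tfrac12\le r\le 1$ of the theorem. Substituting $u=r=\tfrac12$ into the exponents gives $r+u-\tfrac12=\tfrac12$ and $2r+2u-1=1$, so Theorem \ref{Theorem:noise-less-operator} yields immediately
\[
\|L_K^{1/2}(f_D-f^*)\|_K \;\le\; \min_{\lambda>0}\,\lambda^{1/2}\,\mathcal Q_{D,\lambda}\,\|h^*\|_\rho .
\]

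The second step carries the only genuine content: replacing the left-hand side by $\|f_D-f^*\|_\rho$. Here I would invoke the identity $\|g\|_\rho=\|\mathcal L_K^{1/2}g\|_K$, valid for every $g\in L^2_{\rho_X}$ (cited from \cite{Caponnetto2007} in Section \ref{Sec.operator.difference}), applied to $g=f_D-f^*$. Since $f^*\in\mathcal H_K$ and $f_D=\sum_{i=1}^m a_iK_{x_i}\in\mathcal H_K$ by (\ref{interpolation-est}), the difference $g$ lies in $\mathcal H_K$; and on $\mathcal H_K$ the two square-root operators coincide, $\mathcal L_K^{1/2}g=L_K^{1/2}g$. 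Combining $\|f_D-f^*\|_\rho=\|L_K^{1/2}(f_D-f^*)\|_K$ with the displayed bound then finishes the proof, the minimization over $\lambda$ being inherited verbatim from the theorem.

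I expect the only delicate point to be the justification that $\mathcal L_K^{1/2}$ and $L_K^{1/2}$ agree on $\mathcal H_K$, so this is where I would spell out the spectral bookkeeping. By the Mercer expansion, $L_K$ and $\mathcal L_K$ share the eigenvalue sequence $\{\sigma_\ell\}$, with $\mathcal H_K$-normalized eigenfunctions $\phi_\ell$ for $L_K$ and $L^2_{\rho_X}$-normalized eigenfunctions $\phi_\ell/\sqrt{\sigma_\ell}$ for $\mathcal L_K$. Expanding $g=\sum_\ell c_\ell\phi_\ell\in\mathcal H_K$ and applying the functional calculus to each operator shows that both $\mathcal L_K^{1/2}g$ and $L_K^{1/2}g$ equal $\sum_\ell \sqrt{\sigma_\ell}\,c_\ell\,\phi_\ell$, whose squared $\mathcal H_K$-norm is $\sum_\ell \sigma_\ell c_\ell^2$; this confirms the identity and simultaneously reproves $\|g\|_\rho=\|L_K^{1/2}g\|_K$ directly. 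No optimization in $\lambda$ is required, and no use of $\mathcal R_D$ arises, since the regular case $r=\tfrac12$ activates only the $\mathcal Q_{D,\lambda}$ term.
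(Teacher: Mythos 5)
Your proposal is correct and takes essentially the same route as the paper: the paper also obtains this corollary by specializing Theorem \ref{Theorem:noise-less-operator} to $u=r=\tfrac12$ (so the exponents become $\lambda^{1/2}\mathcal Q_{D,\lambda}^{1}$) and invoking the identity $\|f\|_\rho=\|\mathcal L_K^{1/2}f\|_K$ to convert the $\mathcal H_K$-norm bound into the $L^2_{\rho_X}$-norm bound. Your additional spectral verification that $\mathcal L_K^{1/2}$ and $L_K^{1/2}$ coincide on $\mathcal H_K$ is a detail the paper leaves implicit, and it is carried out correctly.
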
  
 
As shown in the above corollary, if $f^*\in\mathcal H_K$, a standard assumption in  approximation theory \cite{hangel-narc-sun-ward,Narcowich2002,Narcowich2004} and learning theory \cite{Caponnetto2007,Smale2007,Lin2017}, the approximation rate depends only on   $\mathcal Q_{D,\lambda}$, the difference between $L_K$ and its discretion counterpart. Generally speaking, the mentioned difference in $\mathcal Q_{D,\lambda}$ decreases with respect to $\lambda$, making the minimization in Corollary \ref{Corollary:rho-noiseless-operator} be well definied.
Theorem \ref{Theorem:noise-less-operator} with $u=0$ and $r>1$ and $\|L_K\|\leq \kappa$ and $\lambda=1$ can also yield the following error estimate under the $L^\infty(\mathcal X)$ norm. 
 
\begin{corollary}
Suppose that $D=\{(x_i,y_i)\}_{i=1}^m$ satisfies (\ref{model-1}) and  that  $f^*$ satisfies (\ref{regularity-assump}) with $r>1$. Then we have
$$
	\|f_D-f^*\|_\infty
	\leq
	 (r-1/2)(\kappa+1)^{r}\|h^*\|_\rho\mathcal R_D .
$$
\end{corollary}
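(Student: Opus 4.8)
The plan is to read the corollary off Theorem~\ref{Theorem:noise-less-operator} by specializing to the endpoint $u=0$ (so that $L_K^u=I$) and then converting the resulting $\mathcal H_K$-norm bound into a sup-norm bound. The bridge is the pointwise estimate recorded just after the theorem: by the reproducing property $|f(x)|=|\langle f,K_x\rangle_K|\leq\|f\|_K\sqrt{K(x,x)}\leq\kappa\|f\|_K$, so $\|f\|_\infty\leq\kappa\|f\|_K$. Hence it suffices to control $\|f_D-f^*\|_K=\|L_K^0(f_D-f^*)\|_K$ and multiply by $\kappa$ at the end.

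First I would invoke the second branch ($r>1$) of \eqref{noise-less-operator} with $u=0$. Since the right-hand side is a minimum over $\lambda>0$, I am free to evaluate it at the convenient value $\lambda=1$; with $u=0$ the factor $\lambda^u$ collapses to $1$, leaving
\[
\|f_D-f^*\|_K\leq(r-1/2)\kappa^{r-3/2}\|h^*\|_\rho\,\mathcal Q_{D,1}\,\mathcal R_D.
\]
The only quantity still to estimate is $\mathcal Q_{D,1}$. Here I would use submultiplicativity of the spectral norm,
\[
\mathcal Q_{D,1}=\left\|(L_{K,D}+I)^{-1/2}(L_K+I)^{1/2}\right\|\leq\left\|(L_{K,D}+I)^{-1/2}\right\|\left\|(L_K+I)^{1/2}\right\|,
\]
and bound the two factors separately: because $L_{K,D}\geq0$ the first factor is at most $1$, while $\|L_K\|\leq\kappa$ gives $\|(L_K+I)^{1/2}\|\leq(\kappa+1)^{1/2}$. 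Thus $\mathcal Q_{D,1}\leq(\kappa+1)^{1/2}$.

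Combining these ingredients yields $\|f_D-f^*\|_\infty\leq\kappa\|f_D-f^*\|_K\leq(r-1/2)\kappa^{r-1/2}(\kappa+1)^{1/2}\|h^*\|_\rho\mathcal R_D$, and I would finish by absorbing the powers of $\kappa$ using $\kappa\leq\kappa+1$ together with $r-1/2>0$, namely $\kappa^{r-1/2}(\kappa+1)^{1/2}\leq(\kappa+1)^{r-1/2}(\kappa+1)^{1/2}=(\kappa+1)^{r}$, which produces exactly the claimed constant $(r-1/2)(\kappa+1)^{r}$. The argument is essentially bookkeeping once Theorem~\ref{Theorem:noise-less-operator} is available, and the constant manipulation is routine; the one place I expect to need a little care is the endpoint $u=0$, which sits at the boundary of the hypothesis $0<u\leq1/2$ of the theorem. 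I would therefore verify that \eqref{noise-less-operator} genuinely persists in the limit $u\downarrow0$ (equivalently that $L_K^0=I$ is admissible in the derivation of the theorem), rather than holding only on the open interval — this is the sole conceptual, as opposed to computational, step.
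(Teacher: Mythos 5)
Your proposal is correct and is essentially the paper's own proof: the paper obtains this corollary exactly by invoking Theorem~\ref{Theorem:noise-less-operator} with $u=0$, $r>1$, $\lambda=1$ and the bound $\|L_K\|\leq\kappa$, which matches your steps, including $\mathcal Q_{D,1}\leq\|(L_{K,D}+I)^{-1/2}\|\,\|(L_K+I)^{1/2}\|\leq(\kappa+1)^{1/2}$, the conversion $\|f\|_\infty\leq\kappa\|f\|_K$, and the absorption $\kappa^{r-1/2}(\kappa+1)^{1/2}\leq(\kappa+1)^r$. Your caution about the endpoint $u=0$ is warranted but resolves favorably: in the theorem's proof the factor $\lambda^u\mathcal Q_{D,\lambda}$ only arises from bounding $\|L_K^u(I-P_m)\|$, and at $u=0$ this is simply $\|I-P_m\|\leq1$ (consistent with Theorem~\ref{Theorem:Stability-via-operator}, which is stated for $0\leq u\leq 1/2$), so the estimate \eqref{noise-less-operator} persists at $u=0$.
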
 

All the above results show that the approximation error of kernel interpolation depends heavily on the differences of operators $L_K$ and $L_{K,D}$ and is independent of the dimension of input space $\mathcal X$. This makes our error analysis totally different from the classical results  \cite{Narcowich2002,Narcowich2004,hangel-narc-rieger-ward,hangel-narc-ward-1,Wendland2004} which introduced the distribution of scattered data such as the mesh norm, separation radius and mesh ratio to quantify the error.

\subsection{Kernel interpolation of noisy-data}
In this part, we study the approximation performance of kernel interpolation    when the data are noisy, that is, there exists an $f^*$ satisfying (\ref{regularity-assump}) with $r\geq 1/2$  such that
\begin{equation}\label{model-2}
y_i=f^*(x_i)+\varepsilon_i,
\end{equation}
where $\varepsilon_i$  satisfies $E[\varepsilon_i]=0$ and $|\varepsilon_i|\leq \gamma$ for some $\gamma\geq 0$. It should be mentioned that our noisy model (\ref{model-2}) is different from the classical setting \cite{Wendland2004,hesse2017radial} that is only available to  extremely  small noise. In fact, the magnitude of noise  in \eqref{model-2} can be comparable  with $\|f^*\|_\infty$. Instead, we   assume the noise to be randomly white noise, which is standard in statistics and learning theory \cite{Caponnetto2007,Liang2020,lin2021distributed}.

The  approximation error analysis for (\ref{model-2}) is much more sophisticated than the noise-free model (\ref{model-1}) which requires the kernel matrix to be well-conditioned. To gauge the effect of noise, we introduce the following quantity:
\begin{eqnarray} \label{Def.P}
\mathcal P_{D,\lambda}  := 
\left\|(L_K+\lambda
I)^{-1/2}(L_{K,D}f^*-S_D^Ty_D)\right\|_K.
\end{eqnarray}
By the aid of the restriction of  $\varepsilon_i$, i.e., $E[\varepsilon_i]=0$, $L_{K,D}f^*$ can be regarded as a population of $S_D^Ty_D$ in terms that
$$
    L_{K,D}f^*-S_D^Ty_D=\frac1m\sum_{i=1}^m\varepsilon_iK_{x_i}.
$$
Therefore, $\mathcal P_{D,\lambda}$ can be easily derived from some Hilbert-valued concentration inequality \cite{Caponnetto2007}. In the following theorem, we present an error estimate for kernel interpolation with noisy data. 

\begin{theorem}\label{Theorem:Stability-via-operator}
	Let $0\leq u\leq 1/2$. If  $D=\{(x_i,y_i)\}_{i=1}^m$ satisfies  (\ref{model-2}) and $f^*$ satisfies (\ref{regularity-assump}) with $r\geq 1/2$, then
	\begin{eqnarray}\label{noise-operator}
	\|L_K^{u}(f_D-f^*)\|_K
	&\leq&
	\min_{\mu>0}(2\mu^{u-1/2}\mathcal Q^2_{D,\mu}+ {\mu}^{u+1/2} (\sigma^D_m)^{-1}\mathcal Q^{2u+1}_{D,\mu})  \mathcal P_{D,\mu} \nonumber\\
	&+&   \min_{\lambda>0} \left\{\begin{array}{ll}
	\lambda^{r+u-1/2}\mathcal Q_{D,\lambda}^{2r+2u-1}\|h^*\|_\rho, & \quad 1/2\leq r\leq 1,\\
	(r-1/2)\kappa^{r-3/2}\|h^*\|_\rho\lambda^u\mathcal Q_{D,\lambda}\mathcal R_D,  & \quad r>1.
	\end{array}
	\right.
	\end{eqnarray}
\end{theorem}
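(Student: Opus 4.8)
The plan is to separate the total error into a noise-free interpolation error, already governed by Theorem~\ref{Theorem:noise-less-operator}, and a purely stochastic fluctuation measured by $\mathcal P_{D,\mu}$. Let $\widetilde f_D:=S_D^T(S_DS_D^T)^{-1}S_Df^*$ be the interpolant of the \emph{clean} data $\{(x_i,f^*(x_i))\}_{i=1}^m$, and write $\boldsymbol{\varepsilon}:=(\varepsilon_1,\dots,\varepsilon_m)^T$. Since $y_D=S_Df^*+\boldsymbol{\varepsilon}$, the representation \eqref{operator-rep1} yields
\[
f_D-f^*=(\widetilde f_D-f^*)+S_D^T(S_DS_D^T)^{-1}\boldsymbol{\varepsilon},
\]
so that $\|L_K^{u}(f_D-f^*)\|_K\le\|L_K^{u}(\widetilde f_D-f^*)\|_K+\|L_K^{u}S_D^T(S_DS_D^T)^{-1}\boldsymbol{\varepsilon}\|_K$ by the triangle inequality. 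The clean interpolant $\widetilde f_D$ satisfies the hypotheses of Theorem~\ref{Theorem:noise-less-operator}, so the first summand is bounded by precisely the second minimum in \eqref{noise-operator}. Everything then reduces to controlling the stochastic summand by the first minimum.

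First I would put the stochastic term in a shape that exposes $\mathcal P_{D,\mu}$. Multiplying it by $L_{K,D}=S_D^TS_D$ and using \eqref{operator-matrix} gives $L_{K,D}\,S_D^T(S_DS_D^T)^{-1}\boldsymbol{\varepsilon}=S_D^T\boldsymbol{\varepsilon}$; as this vector lies in the range of $L_{K,D}$, we get $S_D^T(S_DS_D^T)^{-1}\boldsymbol{\varepsilon}=L_{K,D}^{+}S_D^T\boldsymbol{\varepsilon}$, where $L_{K,D}^{+}$ inverts $L_{K,D}$ on its range and hence satisfies $\|L_{K,D}^{+}\|\le(\sigma_m^D)^{-1}$. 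On the other hand $L_{K,D}f^*-S_D^Ty_D=-S_D^T\boldsymbol{\varepsilon}$, so \eqref{Def.P} says exactly that $w:=(L_K+\mu I)^{-1/2}S_D^T\boldsymbol{\varepsilon}$ has $\|w\|_K=\mathcal P_{D,\mu}$. Writing $S_D^T\boldsymbol{\varepsilon}=(L_K+\mu I)^{1/2}w$ and using $\|L_K^{u}(L_K+\mu I)^{-u}\|\le 1$, the stochastic summand is at most $\Theta_\mu\,\mathcal P_{D,\mu}$ with
\[
\Theta_\mu:=\bigl\|(L_K+\mu I)^{u}L_{K,D}^{+}(L_K+\mu I)^{1/2}\bigr\|.
\]

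The crux is the estimate of $\Theta_\mu$, which I expect to be the main obstacle. I would split $L_{K,D}^{+}$ by the resolvent identity $L_{K,D}^{+}=(L_{K,D}+\mu I)^{-1}+\mu(L_{K,D}+\mu I)^{-1}L_{K,D}^{+}$, valid on the range of $L_{K,D}$. For the first piece, split $(L_{K,D}+\mu I)^{-1}$ and the factor $(L_K+\mu I)^{u}=(L_K+\mu I)^{u-1/2}(L_K+\mu I)^{1/2}$, pair the two halves of $(L_{K,D}+\mu I)^{-1}$ with the two available copies of $(L_K+\mu I)^{1/2}$ so as to create two factors $\mathcal Q_{D,\mu}$ in the sense of \eqref{Def.Q}, and bound the leftover $(L_K+\mu I)^{u-1/2}$ by $\mu^{u-1/2}$; this contributes a term of order $\mu^{u-1/2}\mathcal Q_{D,\mu}^{2}$. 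For the second piece, commuting $L_{K,D}^{+}$ with $(L_{K,D}+\mu I)^{-1/2}$ the same pairing produces one factor $\mathcal Q_{D,\mu}$, the bound $\|L_{K,D}^{+}\|\le(\sigma_m^D)^{-1}$ absorbs the pseudoinverse, and the remaining $(L_K+\mu I)^{u}(L_{K,D}+\mu I)^{-u}$ is handled by the Cordes-type inequality $\|(L_K+\mu I)^{u}(L_{K,D}+\mu I)^{-u}\|\le\mathcal Q_{D,\mu}^{2u}$ (valid since $0\le u\le 1/2$), together with $\|(L_{K,D}+\mu I)^{-(1/2-u)}\|\le\mu^{u-1/2}$; this contributes a term of order $\mu^{u+1/2}(\sigma_m^D)^{-1}\mathcal Q_{D,\mu}^{2u+1}$.

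Collecting the two contributions gives $\Theta_\mu\le c\,\mu^{u-1/2}\mathcal Q_{D,\mu}^{2}+\mu^{u+1/2}(\sigma_m^D)^{-1}\mathcal Q_{D,\mu}^{2u+1}$ with an absolute constant $c$ (equal to $2$ in \eqref{noise-operator}), and taking the infimum over $\mu>0$ recovers the first minimum. Adding the Theorem~\ref{Theorem:noise-less-operator} bound for $\|L_K^{u}(\widetilde f_D-f^*)\|_K$ completes the argument. The genuinely delicate points are the bookkeeping that makes every resolvent insertion pair off into exactly one factor $\mathcal Q_{D,\mu}$, and the use of the Cordes inequality to trade a fractional power of $L_{K,D}+\mu I$ for a power of $\mathcal Q_{D,\mu}$; the unavoidable appearance of $(\sigma_m^D)^{-1}$ is precisely where the conditioning of the kernel matrix $\mathbb K$ enters, quantifying the price of noise.
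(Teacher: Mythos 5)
Your proposal is correct, and it reaches the stated bound by the same overall strategy as the paper---splitting $f_D-f^*$ into the clean-data interpolant error (handled by Theorem~\ref{Theorem:noise-less-operator}) plus a noise term, and bounding the latter by $\mathcal Q_{D,\mu}$, $\mathcal P_{D,\mu}$ and $(\sigma_m^D)^{-1}$---but your algebraic engine for the noise term is genuinely different. The paper never introduces a pseudoinverse: it applies the second-order decomposition (\ref{Second-order-dec}) to $A=S_DS_D^T$, $B=S_DS_D^T+\mu I$ at the matrix level, pushes the result through the commutation relation (\ref{exchange}) to obtain \emph{three} operator terms, and controls the singular factor $S_D^T(S_DS_D^T)^{-2}S_D$ by inequality (\ref{proj-operator-norm-1}) of Proposition~\ref{Proposition:operator-bound-Pm} with $j=2$---an estimate the paper itself must earn through a power-iteration-and-limit argument. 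You instead identify the noise term as $L_{K,D}^{+}S_D^T\boldsymbol{\varepsilon}$ and expand $L_{K,D}^{+}$ by a first-order resolvent identity; the price of ill-conditioning then enters through the elementary spectral fact $\|L_{K,D}^{+}\|=(\sigma_m^D)^{-1}$, so Proposition~\ref{Proposition:operator-bound-Pm} is bypassed entirely, you get two terms instead of three, and your constant on the $\mu^{u-1/2}\mathcal Q^2_{D,\mu}$ term is $1$ rather than the paper's $2$. The pairing into $\mathcal Q_{D,\mu}$ factors and the Cordes step $\|(L_K+\mu I)^u(L_{K,D}+\mu I)^{-u}\|\leq\mathcal Q_{D,\mu}^{2u}$ (Cordes with exponent $2u\leq 1$) are identical in the two arguments. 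One point to tighten: your resolvent identity for $L_{K,D}^{+}$ holds only on the range of $L_{K,D}$, yet you substitute it inside the operator norm $\Theta_\mu$, whose right-hand factor $(L_K+\mu I)^{1/2}$ does not map into that range. The fix is immediate---either apply the identity to the specific vector $S_D^T\boldsymbol{\varepsilon}$ (which does lie in the range) before passing to operator norms, or use $L_{K,D}^{+}=L_{K,D}^{+}P_m$ so that the identity reads $L_{K,D}^{+}=(L_{K,D}+\mu I)^{-1}P_m+\mu(L_{K,D}+\mu I)^{-1}L_{K,D}^{+}$, and note that $P_m$ commutes with resolvents of $L_{K,D}$ and has norm at most $1$, so every bound you state survives unchanged.
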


Compared with Theorem \ref{Theorem:noise-less-operator}, there is an additional term involving $\mathcal P_{D,\mu}$ and $\sigma_m^D$ in the right-hand side of (\ref{noise-operator}) to capture the stability of the kernel interpolation.  It should be highlighted that $\mathcal P_{D,\mu}$ reflects the quality of noise and $(\sigma_m^D)^{-1}$, as well as the condition number $\sigma_1^D/\sigma_m^D$, is a standard measurement for the stability of kernel interpolation \cite{Schaback1995}. Different from existing analysis of kernel interpolation for low dimensional data \cite{Schaback1995,Schaback2000,Wendland2004}, our result presented in Theorem \ref{Theorem:Stability-via-operator} is available to data sampled in arbitrary input space in the sense that there is not the widely used mesh norm $h_\Xi$ involved in our analysis. In particular,  as Figure \ref{fig:motivation_example} (b) purports to show,  the minimum eigenvalue (condition number) is extremely small (large)  for low-dimensional data, and will be relatively large (small) for high-dimensional data. This makes our result be more suitable for tackling high dimensional data and shows the reason why
 the regularization term in \cite{Caponnetto2007,Smale2007,Lin2017,hesse2017radial}  is removable for kernel interpolation. Based on Theorem \ref{Theorem:Stability-via-operator}   with $r=1$, $u=1/2$ or  $u=0$, we can get the following error estimate for kernel interpolation.

\begin{corollary}\label{Corollary:Stability-via-operator-rho}
If  $D=\{(x_i,y_i)\}_{i=1}^m$ satisfies  (\ref{model-2}) and $f^*$ satisfies (\ref{regularity-assump}) with $r=1$, then
\begin{eqnarray*}
	\|f_D-f^*\|_\rho
	\leq
	\min_{\mu>0}(2+ {\mu}(\sigma^D_m)^{-1}) \mathcal Q^2_{D,\mu} \mathcal P_{D,\mu} 
	+   \min_{\lambda>0} \lambda\mathcal Q^2_{D,\lambda}\|h^*\|_\rho 
	\end{eqnarray*}
and
\begin{eqnarray*}
	\|f_D-f^*\|_\infty
	\leq
	\min_{\mu>0}(2\mu^{-1/2}\mathcal Q^2_{D,\mu}+ {\mu}^{1/2}(\sigma^D_m)^{-1}\mathcal Q_{D,\mu})\kappa \mathcal P_{D,\mu} 
	+   \min_{\lambda>0} \kappa \lambda^{1/2}\mathcal Q_{D,\lambda}\|h^*\|_\rho.
	\end{eqnarray*}
\end{corollary}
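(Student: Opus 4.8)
The plan is to obtain both displayed inequalities directly from Theorem \ref{Theorem:Stability-via-operator} by specializing the two free parameters $r$ and $u$; no new machinery is needed, since the corollary is exactly the case $r=1$ with the two admissible endpoints $u=1/2$ and $u=0$.

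First I would derive the $L^2_{\rho_X}$ estimate. Because $r=1\ge 1/2$, the regularity assumption (\ref{regularity-assump}) forces $f^*\in\mathcal H_K$, and since $f_D\in K_\Xi\subset\mathcal H_K$ the difference $f_D-f^*$ lies in $\mathcal H_K$. On $\mathcal H_K$ the operators $\mathcal L_K$ and $L_K$ agree, and the Aronszajn/Mercer description of their eigen-pairs recorded before (\ref{Regularity-1111}) gives $\|g\|_\rho=\|\mathcal L_K^{1/2}g\|_K=\|L_K^{1/2}g\|_K$ for every $g\in\mathcal H_K$. Applying this with $g=f_D-f^*$ yields $\|f_D-f^*\|_\rho=\|L_K^{1/2}(f_D-f^*)\|_K$, which is precisely the left side of (\ref{noise-operator}) at $u=1/2$. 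Setting $(r,u)=(1,1/2)$ on the right side, the stability term collapses to $\min_{\mu>0}(2\mu^{0}\mathcal{Q}^2_{D,\mu}+\mu^{1}(\sigma_m^D)^{-1}\mathcal{Q}^{2}_{D,\mu})\mathcal{P}_{D,\mu}=\min_{\mu>0}(2+\mu(\sigma_m^D)^{-1})\mathcal{Q}^2_{D,\mu}\mathcal{P}_{D,\mu}$, and (being in the regime $1/2\le r\le 1$) the bias term becomes $\min_{\lambda>0}\lambda^{r+u-1/2}\mathcal{Q}^{2r+2u-1}_{D,\lambda}\|h^*\|_\rho=\min_{\lambda>0}\lambda\mathcal{Q}^2_{D,\lambda}\|h^*\|_\rho$. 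Combining the two proves the first inequality.

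Next I would derive the $L^\infty$ estimate in the same spirit. Since $\mathcal X$ is compact we have $\kappa<\infty$ and hence $\|g\|_\infty\le\kappa\|g\|_K=\kappa\|L_K^{0}g\|_K$ for $g\in\mathcal H_K$; taking $g=f_D-f^*$ bounds $\|f_D-f^*\|_\infty$ by $\kappa$ times the left side of (\ref{noise-operator}) at $u=0$. Substituting $(r,u)=(1,0)$ into the right side, the stability term reads $\min_{\mu>0}(2\mu^{-1/2}\mathcal{Q}^2_{D,\mu}+\mu^{1/2}(\sigma_m^D)^{-1}\mathcal{Q}_{D,\mu})\mathcal{P}_{D,\mu}$ and the bias term (again with $1/2\le r\le 1$) reads $\min_{\lambda>0}\lambda^{1/2}\mathcal{Q}_{D,\lambda}\|h^*\|_\rho$. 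Multiplying the whole bound by the positive constant $\kappa$ and absorbing it into each of the two minima gives exactly the second asserted inequality.

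There is essentially no obstacle beyond bookkeeping: the substance is a substitution of the exponents $r+u-1/2$, $2r+2u-1$, $u-1/2$, $u+1/2$, $2u+1$ at $(r,u)=(1,1/2)$ and $(1,0)$. The only step that deserves a word of justification is the identification of the left-hand norm, namely that $\|L_K^{1/2}g\|_K=\|g\|_\rho$ and $\|L_K^{0}g\|_K=\|g\|_K$ for $g=f_D-f^*\in\mathcal H_K$; both follow from the spectral identities already established for $L_K$ and $\mathcal L_K$ together with the embedding $\|g\|_\infty\le\kappa\|g\|_K$.
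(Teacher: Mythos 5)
Your proposal is correct and matches the paper's own route exactly: the paper obtains this corollary by specializing Theorem \ref{Theorem:Stability-via-operator} to $r=1$ with $u=1/2$ (identifying $\|L_K^{1/2}(f_D-f^*)\|_K=\|f_D-f^*\|_\rho$) and $u=0$ (using $\|f_D-f^*\|_\infty\leq\kappa\|f_D-f^*\|_K$), precisely as you do. The exponent bookkeeping in both cases checks out, so nothing further is needed.
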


As $\mathcal Q_{D,\lambda}$ and $\mathcal P_{D,\mu}$ decreases with respect to $\lambda$ and $\mu$ respectively, the minimization  concerning $\mu$ and $\lambda$ is well defined.   To derive the approximation error in detail, we need to tightly bound $\mathcal Q_{D,\lambda}$ and $\mathcal P_{D,\mu}$ via  the spectrum of kernel matrix and then derive verifiable error estimates for kernel interpolation, which is the aim of Section \ref{Sec.Random}.

\subsection{Kernel interpolation for trans-native space data}
If $f^*\in\mathcal H_K$, then the underlying kernel interpolation can be regarded as a projection from $\mathcal H_K$ to $\mathcal H_{K,m}:=\mbox{span}\{\phi_1^{D},\dots,\phi_m^{D}\}$, which makes the analysis expedient. This effective technique is lost when we face a target function $f^*\notin \mathcal H_K$. The ensuing difficulty is referred to as the  ``native space barrier'' in \cite{Narcowich2007}.  To overcome it, Narcowich et al.
made good use of estimates for the minimal eigenvalue of the kernel matrix $\mathbb K$ (in terms of the minimal separation of data sites). But this approach runs into obstacles when dealing with high-dimensional  data.  In this subsection, we conduct analysis by modifying the integral operator approach used in the previous two subsections.  For this purpose,
define $\mathcal S_{D}:L_{\rho_X}^2\rightarrow\mathbb R^{m}$  by  \footnote{In this paper, we use  notations $S_D$ and $\mathcal S_D$ to denote respectively  sampling operators on $\mathcal H_K$ and $L_{\rho_X}^2$. The action of $\mathcal S_D$ is restricted  to the totality of all continuous functions on $\mathcal{X}$. We note that $S_D$ is a continuous linear operator but $\mathcal S_D$ is not. }
$$
\mathcal S_{D}f:=(f(x_i))_{i=1}^m.
$$
Define further
$$
\mathcal L_{K,D}f:=  S_D^T\mathcal S_Df=\frac1{m}\sum_{i=1}^mf(x_i)K_{x_i},\qquad f\in L_{\rho_X}^2.
$$
For $f\in\mathcal H_K$, we have $\mathcal L_{K,D}f=L_{K,D}f$.
Under the noiseless setting (\ref{model-1}) with $f^*$ satisfying (\ref{regularity-assump}) for $0<r<1/2$,
it  follows from (\ref{interpolation-est}) that
\begin{equation}\label{operator-rep12}
f_D=  S_D^T(  S_D  S_D^T)^{-1} \mathcal S_Df^*.
\end{equation}

Different from (\ref{Theorem:noise-less-operator}), $f\notin\mathcal H_K$ requires novel technique in analysis, even for noise-free data.
The basic idea is to find a $f_\mathcal H\in\mathcal H_K$ such that $f_\mathcal H$ is a good approximation of $f^*$ and then regard (\ref{model-1}) as a noisy model with $f^*-f_\mathcal H$  the noise and $f_\mathcal H$ the target function. In this way, the stability of kernel interpolation  involving $(\sigma_m^D)^{-1}$ should be considered in the estimate.
\begin{eqnarray}
\mathcal W_{D,\lambda} &:=& \left\|(L_K+\lambda
I)^{-1/2}(L_{K,D}-L_K)\right\|,\\  \label{Def.W}
\mathcal U_{D,\lambda,g} &=&
\left\|(L_K+\lambda
I)^{-1/2}(\mathcal L_{K,D}g-\mathcal L_Kg)\right\|_K \label{Def.U}.
\end{eqnarray}
Similar as $\mathcal Q_{D,\lambda}$ and $\mathcal R_D$, $\mathcal W_{D,\lambda}$ is also a quantity to measure the difference  between
$L_{K,D}$ and $L_K$. 
Like
$\mathcal P_{D,\lambda}$, $\mathcal U_{D,\lambda,g}$ 
with some $g$ describes the effect of ``noise'', i.e. $f^*-f_\mathcal H$.
Since $f^*\not\in\mathcal H_K$, it is impossible to derive error estimate under the $\mathcal H_K$ norm like Theorem \ref{Theorem:noise-less-operator}. We conduct our analysis under the $L_{\rho_X}^2$ norm in the following theorem.

\begin{theorem}\label{Theorem:native-barrier-without-noise}
	If (\ref{model-1}) holds with a continuous $f^*$ satisfying (\ref{regularity-assump}) for $0\leq r<1/2$, then
	\begin{eqnarray}\label{Native-barrier-error}
	\|f^*-f_D\|_\rho&\leq& \min_{\lambda>0}\big\{
	(1+(1+\lambda(\sigma_m^D)^{-1})\mathcal Q_{D,\lambda}^{2r+2})  \lambda^r\|h^*\|_\rho\nonumber\\
	&+&
	(2+\lambda(\sigma_m^D)^{-1})\mathcal Q^2_{D,\lambda}\mathcal U_{D,\lambda,f^*}+
	\lambda^{r-1/2} \mathcal Q_{D,\lambda}^2\mathcal W_{D,\lambda}\|h^*\|_\rho\big\}.
	\end{eqnarray}
\end{theorem}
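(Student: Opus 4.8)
The plan is to circumvent the native space barrier by replacing the out-of-native target $f^*$ with a regularized proxy that lives in $\mathcal H_K$, and then to treat the resulting residual as a (deterministic) ``noise'', exactly in the spirit of the conversion announced in the paragraph preceding the theorem and of Theorem \ref{Theorem:Stability-via-operator}. Concretely, for the same $\lambda>0$ that appears in the bound I would set $f_\lambda:=(\mathcal L_K+\lambda I)^{-1}\mathcal L_K f^*=\mathcal L_K(\mathcal L_K+\lambda I)^{-1}f^*$, which lies in $\mathcal H_K$ since $\mathrm{range}(\mathcal L_K)\subseteq\mathcal H_K$. Using $f^*=\mathcal L_K^r h^*$, $\|f\|_\rho=\|\mathcal L_K^{1/2}f\|_K$ and the elementary spectral bound $\sigma^{a}/(\sigma+\lambda)\le\lambda^{a-1}$ for $0\le a\le1$, I would record two estimates at the outset: the bias bound $\|f_\lambda-f^*\|_\rho=\lambda\,\|(\mathcal L_K+\lambda I)^{-1}\mathcal L_K^r h^*\|_\rho\le\lambda^r\|h^*\|_\rho$ (this supplies the leading ``$1\cdot\lambda^r\|h^*\|_\rho$'' contribution), and the native-norm bound $\|f_\lambda\|_K=\|\mathcal L_K^{-1/2}f_\lambda\|_\rho\le\lambda^{r-1/2}\|h^*\|_\rho$ (the source of every $\lambda^{r-1/2}$ factor in the estimation terms). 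Here $0\le r<1/2$ is what makes $\lambda^{r-1/2}$ blow up as $\lambda\to0$, consistent with the shape of the theorem.

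The second step is an exact algebraic decomposition of $f_D-f^*$. From the interpolation formula \eqref{operator-rep12} one obtains the operator form of the interpolation condition, $L_{K,D}f_D=S_D^T\mathcal S_D f^*=\mathcal L_{K,D}f^*$; since $f_\lambda\in\mathcal H_K$ gives $\mathcal L_{K,D}f_\lambda=L_{K,D}f_\lambda$, subtracting yields $L_{K,D}(f_D-f_\lambda)=\mathcal L_{K,D}(f^*-f_\lambda)$. Writing $P:=S_D^T(S_DS_D^T)^{-1}S_D$ for the $\mathcal H_K$-orthogonal projector onto $\mathrm{span}\{K_{x_i}\}=\mathrm{range}(L_{K,D})$, noting $f_D\in\mathrm{range}(P)$ and $(I-P)f_\lambda\in\mathrm{null}(L_{K,D})$, and inverting $L_{K,D}$ on its range, I would arrive at the three-term split
\[
f_D-f^*=\underbrace{(f_\lambda-f^*)}_{\mathrm{(A)}}\;-\;\underbrace{(I-P)f_\lambda}_{\mathrm{(B)}}\;+\;\underbrace{L_{K,D}^{\dagger}\,\mathcal L_{K,D}(f^*-f_\lambda)}_{\mathrm{(C)}},
\]
where $L_{K,D}^{\dagger}$ is the Moore--Penrose inverse, so that $\|L_{K,D}^{\dagger}\|=(\sigma_m^D)^{-1}$ on $\mathrm{range}(L_{K,D})$. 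Term (A) is already controlled; (B) is the native-space interpolation error of the proxy, and (C) is the noise-amplification term. This split is the heart of turning an out-of-native problem into an in-native noisy one.

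Third, I would estimate (B) and (C) in $\|\cdot\|_\rho=\|\mathcal L_K^{1/2}\cdot\|_K$ by systematically inserting $(L_K+\lambda I)^{\pm1/2}$ and $(L_{K,D}+\lambda I)^{\pm1/2}$ factors and charging each change of frame to a factor of $\mathcal Q_{D,\lambda}$. For (B) I use $(I-P)f_\lambda\in\mathrm{null}(L_{K,D})$, so $(L_{K,D}+\lambda I)^{-1/2}(I-P)f_\lambda=\lambda^{-1/2}(I-P)f_\lambda$, together with $L_K(I-P)f_\lambda=(L_K-L_{K,D})(I-P)f_\lambda$; combined with $\|L_K^{1/2}(L_K+\lambda I)^{-1/2}\|\le1$, the bound $\|f_\lambda\|_K\le\lambda^{r-1/2}\|h^*\|_\rho$, and $\mathcal Q_{D,\lambda}\ge1$, this produces a contribution governed by $\mathcal Q_{D,\lambda}$, $\mathcal W_{D,\lambda}$ and $\lambda^{r-1/2}$, i.e. the stated $\lambda^{r-1/2}\mathcal Q_{D,\lambda}^{2}\mathcal W_{D,\lambda}\|h^*\|_\rho$ term. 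For (C) the key is the algebraic split $\mathcal L_{K,D}(f^*-f_\lambda)=[\mathcal L_{K,D}f^*-\mathcal L_K f^*]-(L_{K,D}-L_K)f_\lambda+\mathcal L_K(f^*-f_\lambda)$: the first bracket is exactly what $\mathcal U_{D,\lambda,f^*}$ measures, the second is governed by $\mathcal W_{D,\lambda}$ and $\|f_\lambda\|_K$, and the third is a genuine bias of order $\lambda^r\|h^*\|_\rho$. Composing these with the prefactor $L_{K,D}^{\dagger}$ — more precisely, comparing $L_{K,D}^{\dagger}$ with $(L_{K,D}+\lambda I)^{-1}$ on the range, which exposes the amplification factors $\lambda(\sigma_m^D)^{-1}$ — and converting the $L_{K,D}$-frame back to the $L_K$-frame (each swap costing a $\mathcal Q_{D,\lambda}$) is what assembles the remaining pieces $(1+\lambda(\sigma_m^D)^{-1})\mathcal Q_{D,\lambda}^{2r+2}\lambda^r\|h^*\|_\rho$ and $(2+\lambda(\sigma_m^D)^{-1})\mathcal Q_{D,\lambda}^2\mathcal U_{D,\lambda,f^*}$, after which a triangle inequality over (A)--(C) and the minimization over $\lambda$ finish the proof.

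I expect the main obstacle to be precisely the bookkeeping in term (C). Because $f^*\notin\mathcal H_K$, the residual $f^*-f_\lambda$ is not small in the native norm, and the interpolant amplifies it through $L_{K,D}^{\dagger}$, whose norm on the range is $(\sigma_m^D)^{-1}$ — the very quantity the paper flags as large for low-dimensional data. Threading $(L_K+\lambda I)^{\pm1/2}$ and $(L_{K,D}+\lambda I)^{\pm1/2}$ through $L_{K,D}^{\dagger}$ so that every frame change is paid for by exactly one $\mathcal Q_{D,\lambda}$, while the surviving powers of $\lambda$ collect into $\lambda^r$ (with the accompanying $\mathcal Q_{D,\lambda}^{2r+2}$) and the amplification collects into $\lambda(\sigma_m^D)^{-1}$, is delicate; the constraint $0\le r<1/2$ must be invoked repeatedly to guarantee that the spectral multipliers $\sigma^{s}/(\sigma+\lambda)$ stay bounded by the advertised powers of $\lambda$.
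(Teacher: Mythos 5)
Your plan is sound in its essentials but follows a genuinely different route from the paper's. Both arguments hinge on the same proxy $f_\lambda=(L_K+\lambda I)^{-1}\mathcal L_Kf^*$ with the same two a priori bounds, $\|f^*-f_\lambda\|_\rho\le\lambda^r\|h^*\|_\rho$ and $\|f_\lambda\|_K\le\lambda^{r-1/2}\|h^*\|_\rho$; the divergence is in the intermediate object. The paper passes through the \emph{empirical ridge solution} $f_{D,\lambda}=(L_{K,D}+\lambda I)^{-1}\mathcal L_{K,D}f^*$: it bounds $\|f_\lambda-f_{D,\lambda}\|_\rho$ by a first-order resolvent identity, which yields $\lambda^{r-1/2}\mathcal Q_{D,\lambda}^2\mathcal W_{D,\lambda}\|h^*\|_\rho+\mathcal Q_{D,\lambda}^2\mathcal U_{D,\lambda,f^*}$, and bounds $\|f_{D,\lambda}-f_D\|_\rho$ by the second-order decomposition \eqref{Second-order-dec} combined with \eqref{exchange} and $\|S_D^T(S_DS_D^T)^{-2}S_D\|\le(\sigma_m^D)^{-1}$, which yields $(1+\lambda(\sigma_m^D)^{-1})\bigl(\mathcal Q_{D,\lambda}^2\mathcal U_{D,\lambda,f^*}+\lambda^r\mathcal Q_{D,\lambda}^{2r+2}\|h^*\|_\rho\bigr)$. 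You instead pass through $P_mf_\lambda$ and invert $L_{K,D}$ on its range. Your identity $f_D-P_mf_\lambda=L_{K,D}^{\dagger}\mathcal L_{K,D}(f^*-f_\lambda)$ is correct (since $L_{K,D}f_D=\mathcal L_{K,D}f^*$ and $L_{K,D}P_mf_\lambda=L_{K,D}f_\lambda$), your three-way split of $\mathcal L_{K,D}(f^*-f_\lambda)$ checks out, and your comparison of $L_{K,D}^{\dagger}$ with $(L_{K,D}+\lambda I)^{-1}$ on the range, namely $L_{K,D}^{\dagger}=(I+\lambda L_{K,D}^{\dagger})(L_{K,D}+\lambda I)^{-1}$, is exactly the pseudo-inverse avatar of the paper's second-order decomposition, so all the advertised estimates are achievable.

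The caveat is that your arrangement does not reproduce the literal inequality \eqref{Native-barrier-error}, and the discrepancy is structural, not just constant-chasing: what matters is \emph{which function} is pushed through the stability-amplified inverse. The paper routes $\mathcal S_Df^*$ itself through $(S_DS_D^T)^{-1}$, so the amplified piece splits only into $\mathcal U_{D,\lambda,f^*}$ and $\lambda^r\mathcal Q_{D,\lambda}^{2r+2}$ contributions, and $\mathcal W_{D,\lambda}$ appears solely in the unamplified statistical piece with coefficient exactly $\lambda^{r-1/2}\mathcal Q_{D,\lambda}^2\|h^*\|_\rho$. You route $\mathcal L_{K,D}(f^*-f_\lambda)$ through $L_{K,D}^{\dagger}$; since this residual contains $f_\lambda$, the term $(L_{K,D}-L_K)f_\lambda$ reappears \emph{inside} the amplified piece, so your $\mathcal W_{D,\lambda}$-term necessarily carries the extra factor $(1+\lambda(\sigma_m^D)^{-1})$, and the projection piece $(I-P_m)f_\lambda$ contributes an additional $\lambda^r\mathcal Q_{D,\lambda}\|h^*\|_\rho$ not covered by the stated bound when $r$ is near $0$. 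Your variant is equally serviceable downstream (in Theorem \ref{Theorem:native-barrier-without-noise-eigenvalue} the amplification factor gets merged into those terms anyway), but it is strictly weaker than \eqref{Native-barrier-error} precisely when $\lambda(\sigma_m^D)^{-1}$ is large, i.e. in the ill-conditioned regime the theorem is designed to expose. If you want the inequality as stated, replace your intermediate object $P_mf_\lambda$ by $f_{D,\lambda}$: that is what keeps $f_\lambda$ — and hence $\mathcal W_{D,\lambda}$ — out of the pseudo-inverse's reach.
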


Similar as Theorem \ref{Theorem:Stability-via-operator}, the estimate in Theorem \ref{Theorem:native-barrier-without-noise} requires the stability of kernel interpolation, even though there is not any noise involved in the model (\ref{model-1}). As described above, for $f^*\notin\mathcal H_K$, we regard $f^*-f_{\mathcal H}$ as the noise in the analysis. Such a technique is not novel and has been adopted in \cite{Narcowich2007} for the same purpose for kernel interpolation on the sphere.  As a result, our result requires the minimal eigenvalue $\sigma_m^D$ of the kernel matrix to be not so small, which makes the analysis be only available for high-dimensional data.

We then show the roughness of $f$ satisfying (\ref{regularity-assump}) with $0\leq r<1/2$ by taking the algebraic decaying eigenvalues for examples. 
Assume $\mathcal X=\mathbb S^d$, the $d+1$-dimensional unit sphere embedded into $\mathbb R^{d+1}$ and
$\sigma_\ell\leq a\ell^{2\alpha}$ with $\alpha>1/2$ and $a>0$.
Then it is easy to check that $\mathcal H_K$ is equivalent to the Sobolev space   $\mathcal W_2^\tau(\mathbb S^d)$ with $\tau=\alpha d$ \cite{Narcowich2007}. As shown in \eqref{Regularity-1111}, different $r$   implies different eigenvalue decaying of $f^*$. 
In this way, (\ref{regularity-assump}) with $r$ running out $(0,1/2)$  is equivalent to interpolation space between $L^2_{\rho_X}$ and $\mathcal H_K$ with all indexes.

\section{Error Analysis via Spectrum of Kernel Matrix for Random Sampling}\label{Sec.Random}
Many existing error estimates for kernel interpolation are given in terms of $h_\Xi$, the mesh norm of an underlying sampling set $\Xi$; see e.g. \cite{Narcowich2002,Narcowich2004,Wendland2004,Narcowich2006,Narcowich2007}). Generally speaking, a smaller $h_\Xi$ gives rise to a more favorable error estimate. Assume quasi-uniformity of point distribution. Then we have $h_\xi\sim m^{-1/d}$, where $m$ is the cardinality of $\Xi$, which becomes infeasible when  $d$ is sufficiently large. More importantly, Monte Carlo simulations we have run based on  many high dimensional problems show that 
the mesh norm $h_\Xi$ and the minimal separation $q_{_\Xi}$ are lager than one with high probability. This feature has rendered many error-analysis techniques developed in the literature  ineffective in dealing with high dimensional problems. 
As mentioned in the introduction, we approach error analysis for kernel interpolation by adopting different measurements of operator differences 
  in which the spectrum of the kernel matrix plays a prominent role.  We point out that the error estimates presented here are probabilistic, which is the next best thing under situations  
where deterministic error analysis methods are impossible to implement. 
Motivations of our pursuit stem from two sources: (i) our simulations on condition numbers of Gaussian-kernel matrices (see Figure 2); (ii) the results we have gathered in Appendix B in which relations between finite-differences of operators and the spectrum of kernel matrices are given and in Appendix C which summarizes probabilistic estimates for lower bounds of $q_{_\Xi}$. As such, readers may find it helpful to review the pertinent results before proceeding. To be detailed, we carry out probabilistic error analysis   of kernel interpolation in terms of random sampling setting, i.e.  $\Xi=\{x_i\}_{i=1}^m$ are drawn i.i.d.  according to $\rho_X$. Such a setting has been adopted in the well known learning theory framework \cite{Smale2005,Smale2007,Caponnetto2007} and is   more suitable than the deterministic sampling setting for high-dimensional data \cite{hall2005geometric}.  

As the differences between $L_K$ and its empirical counterpart is not intuitive, we quantify the error analysis in terms of the spectrum of kernel matrix by using the recently developed integral operator technique in \cite{Lin2017,Guo2017,Blanchard2016}. For the sake of brevity, we only present   error estimates under the $L_{\rho_X}^2$ norm and those under the $\mathcal H_K$ norm as well as uniform norm can be derived by using results in the previous section and the same method in this section. 

Denote the eigenvalues of $\mathbb K$  by
$\{\sigma_{\ell,\mathbb K}\}_{\ell=1}^m$ with $ \sigma_{1,\mathbb K}\geq
\sigma_{2,\mathbb K} \ge \dots \geq \sigma_{m,\mathbb K}>0$.
From (\ref{operator-matrix}), we   have
\begin{equation}\label{operator-matrix-eigenvalue}
\sigma_{\ell,\mathbb K}=m\sigma^D_\ell,\qquad \ell=1,\dots,m.
\end{equation}
Write
\begin{equation}\label{Def.A}
\mathcal A_{D,\lambda}:=
\left(\frac{1}{m\lambda}+\frac{1}{\sqrt{m\lambda}}\right)\max\left\{1,\sqrt{\sum_{\ell=1}^m
	\frac{\sigma_{\ell,\mathbb K}}{\lambda m+\sigma_{\ell,\mathbb K}}}\right\}.
\end{equation}
It is easy to check that $\mathcal A_{D,\lambda}$  is non-increasing  with respect to the eigenvalues of the kernel matrix and decreases with respect to $\lambda$.    
The following theorem quantifies the error of kernel interpolation for noiseless data.

\begin{theorem}\label{Theorem:noise-less-eigenvalue}
	Let $0<\delta<1$. If $D=\{(x_i,y_i)\}_{i=1}^m$ satisfies (\ref{model-1}), $f^*$ satisfies (\ref{regularity-assump}) with $r\geq 1/2$ and $X=\{x_i\}_{i=1}^m$ are  drawn identically and independently according to $\rho_X$, then with confidence at least $1-\delta$, there holds
	\begin{equation}\label{noise-less-eigenvalue}
	\|f_D-f^*\|_\rho
	\leq
	C_1\log^{6}\frac{16}\delta\left\{\begin{array}{lll}
	&\min\limits_{\lambda>0}\{\lambda^r
	(\mathcal A_{D,\lambda}+1)^{2r}\} , \quad & 1/2\leq r\leq 1,\\
	&\frac1{\sqrt{m}} \min\limits_{\lambda>0}\{\sqrt{\lambda}(\mathcal A_{D,\lambda}+1)\},\quad  & r>1,
	\end{array}
	\right.
	\end{equation}
	where $C_1$ is a constant depending only on $\kappa$, $\|h^*\|_\rho$ and $r$.
\end{theorem}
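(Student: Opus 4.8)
The plan is to convert the operator-theoretic estimate of Theorem~\ref{Theorem:noise-less-operator} into a spectral one by controlling $\mathcal Q_{D,\lambda}$ and $\mathcal R_D$ probabilistically. First I would specialize Theorem~\ref{Theorem:noise-less-operator} to $u=1/2$: since $r\ge 1/2$ forces $f^*\in\mathcal H_K$, we have $f_D-f^*\in\mathcal H_K$, and by the identity $\|g\|_\rho=\|\mathcal L_K^{1/2}g\|_K$ together with the coincidence of $L_K$ and $\mathcal L_K$ on $\mathcal H_K$, the left-hand side $\|L_K^{1/2}(f_D-f^*)\|_K$ is exactly $\|f_D-f^*\|_\rho$. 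This reduces the task to bounding, with confidence $1-\delta$, the quantity $\mathcal Q_{D,\lambda}$ in \eqref{Def.Q} (for $1/2\le r\le 1$) and the pair $\mathcal Q_{D,\lambda},\mathcal R_D$ in \eqref{Def.Q}--\eqref{Def.R} (for $r>1$) in terms of the kernel-matrix spectrum encoded by $\mathcal A_{D,\lambda}$ in \eqref{Def.A}.

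The core step is the probabilistic bound $\mathcal Q_{D,\lambda}\le C(\mathcal A_{D,\lambda}+1)\log^{3}(16/\delta)$. I would write $L_{K,D}=\frac1m\sum_{i=1}^m\langle\cdot,K_{x_i}\rangle_K K_{x_i}$ as an empirical average of i.i.d.\ rank-one operators with mean $L_K$, and split $\mathcal Q_{D,\lambda}^2=\|I+(L_{K,D}+\lambda I)^{-1/2}(L_K-L_{K,D})(L_{K,D}+\lambda I)^{-1/2}\|$, reducing everything to the deviation $(L_{K,D}+\lambda I)^{-1/2}(L_K-L_{K,D})(L_{K,D}+\lambda I)^{-1/2}$. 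Applying the Hilbert-space Bernstein inequality of Pinelis~\cite{Pinelis1994} (as adapted in \cite{Caponnetto2007,Lin2017,Guo2017,Blanchard2016}), the uniform bound $\kappa^2$ supplies the $1/(m\lambda)$ term while the variance is governed by the \emph{empirical} effective dimension $\sum_{\ell=1}^m\sigma^D_\ell/(\lambda+\sigma^D_\ell)$. Using $\sigma_{\ell,\mathbb K}=m\sigma^D_\ell$ from \eqref{operator-matrix-eigenvalue}, this equals $\sum_{\ell=1}^m\sigma_{\ell,\mathbb K}/(\lambda m+\sigma_{\ell,\mathbb K})$, the quantity inside $\mathcal A_{D,\lambda}$. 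This is where the kernel-matrix eigenvalues enter and the bound becomes fully data-dependent and dimension-free.

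For the branch $r>1$ I would additionally bound $\mathcal R_D=\|L_{K,D}-L_K\|_{HS}$. Since $\|\langle\cdot,K_x\rangle_K K_x\|_{HS}\le\kappa^2$ uniformly, the same i.i.d.\ average admits a Hilbert--Schmidt concentration bound, giving $\mathcal R_D\le C m^{-1/2}\log(16/\delta)$ with high probability. Substituting this and the $\mathcal Q_{D,\lambda}$ bound into the $r>1$ line of \eqref{noise-less-operator} (with $u=1/2$) yields $\lambda^{1/2}\mathcal Q_{D,\lambda}\mathcal R_D\lesssim m^{-1/2}\sqrt\lambda(\mathcal A_{D,\lambda}+1)$, matching the stated second case; for $1/2\le r\le 1$ the term $\lambda^r\mathcal Q_{D,\lambda}^{2r}$ directly gives $\lambda^r(\mathcal A_{D,\lambda}+1)^{2r}$. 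I would then take a union bound over the two or three concentration events, collect all logarithmic factors into the prefactor $\log^{6}(16/\delta)$ (the sixth power arising from raising the $\mathcal Q_{D,\lambda}$-bound to the power $2r\le 2$), and minimize over $\lambda>0$.

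The main obstacle is the passage from the variance proxy a naive Bernstein bound produces --- namely the \emph{continuous} effective dimension $\sum_\ell\sigma_\ell/(\lambda+\sigma_\ell)$ built from the unknown spectrum of $L_K$ --- to the \emph{empirical} one built from the computable kernel-matrix eigenvalues $\sigma_{\ell,\mathbb K}$. Making the estimate genuinely data-dependent requires either running the concentration argument directly against $(L_{K,D}+\lambda I)^{-1/2}$ rather than $(L_K+\lambda I)^{-1/2}$, or a self-bounding comparison showing the two effective dimensions are equivalent once the deviation is controlled; keeping track of the resulting probability budget and log powers while introducing no dependence on the mesh norm $h_\Xi$ or on the dimension $d$ is the delicate part of the argument.
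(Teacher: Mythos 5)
Your proposal follows essentially the same route as the paper: specialize Theorem~\ref{Theorem:noise-less-operator} at $u=1/2$, bound $\mathcal Q_{D,\lambda}$ (and, for $r>1$, $\mathcal R_D$) with high probability in terms of $\mathcal A_{D,\lambda}$ via Hilbert-space concentration, take a union bound, and minimize over $\lambda$. The ``obstacle'' you flag at the end --- converting the continuous effective dimension $\mathcal N(\lambda)$ into the empirical, kernel-matrix one --- is exactly what the paper dispatches by combining Lemma~\ref{Lemma:Operator-product} (second-order decomposition, from \cite{Guo2017}) with the effective-dimension comparison of Lemma~\ref{Lemma:Effective dimension} (from \cite{Mucke2018}), i.e.\ the ``self-bounding comparison'' alternative you named, so your plan is correct and matches the paper's argument.
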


 It should be noted that there are two terms,  $\sqrt{\lambda}$ and $\sqrt{\lambda}\mathcal A_{D,\lambda}$,  involved in the right-hand side of (\ref{noise-less-eigenvalue}). The former  increases with respect to $\lambda$ while the latter decreases with $\lambda$.   Therefore, there is a  unique $\lambda_0$ minimizing $\sqrt{\lambda}+\sqrt{\lambda}\mathcal A_{D,\lambda}$, making our estimate be well defined.
In view of (\ref{Def.A}), Theorem \ref{Theorem:noise-less-eigenvalue} shows that the approximation error of kernel interpolation can be given in terms of the trace of the matrix $\mathbb K(\mathbb K+\lambda m I)^{-1}$, which depends only on the spectrum of the kernel matrix. From (\ref{noise-less-eigenvalue}), we conclude that kernel matrices with smaller eigenvalues perform better than those with larger ones. Thus, for interpolation of noise-free data with target functions from the native space, kernels with small eigenvalues, such as the Gaussian kernel, are preferable.  The main difference between Theorem \ref{Theorem:noise-less-eigenvalue} and existing results in \cite{Narcowich2002,Narcowich2004,Narcowich2006,Narcowich2007} is that there are not any measurements for the distributions of scattered data such as the mesh norm and separation radius that would be large when $d$ is large involved in our error estimate of noise-free data.

Though kernel interpolation performs well for noise-free data, 
data obtained from modeling  real-world problems often contain noises. To tolerate noises,  the kernel matrix must be well-conditioned. In the following theorem, we quantify the approximation performance  of kernel interpolation with noisy data via the spectrum of kernel matrix.

\begin{theorem}\label{Theorem:Stability-via-eigenvalue}
	Let $0<\delta<1$. If $D=\{(x_i,y_i)\}_{i=1}^m$ satisfies (\ref{model-2}), $f^*$ satisfies (\ref{regularity-assump}) with $r\geq 1/2$ and $\Xi=\{x_i\}_{i=1}^m$ are i.i.d. drawn according to $\rho_X$, then with confidence at least $1-\delta$, there holds
	\begin{eqnarray}\label{noise-eigenvalue}
	 \|f_D-f^*\|_\rho
	&\leq&
	C_2\min_{\mu>0}\left\{(1+ {\mu}m (\sigma_{m,\mathbb K})^{-1}) ( \mathcal A_{D,\mu}+1)^2
	\sqrt{\mu}\mathcal A_{D,\mu}\right\} \log^6\frac{16}{\delta}\nonumber\\
	&+&    C_1\log^{6}\frac{16}\delta\left\{\begin{array}{ll}
	\min\limits_{\lambda>0}\{\lambda^r
	(\mathcal A_{D,\lambda}+1)^{2r}\} , & \quad 1/2\leq r\leq 1,\\
	\frac1{\sqrt{m}} \min\limits_{\lambda>0}\{\sqrt{\lambda}(\mathcal A_{D,\lambda}+1)\}, & \quad r>1,
	\end{array}
	\right.
	\end{eqnarray}
	where
	$C_2$ is a constant depending only on $\kappa$, $\gamma$ and $\|f^*\|_\infty$.
\end{theorem}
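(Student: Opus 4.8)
The plan is to reduce everything to the operator-level estimate of Theorem~\ref{Theorem:Stability-via-operator} and then replace each operator-difference quantity by its spectral surrogate built from $\mathbb{K}$. First I would specialize Theorem~\ref{Theorem:Stability-via-operator} to $u=1/2$. Since $r\geq 1/2$ forces $f^*\in\mathcal H_K$ and $f_D\in K_\Xi\subset\mathcal H_K$, the difference $f_D-f^*$ lies in $\mathcal H_K$; on such a function $\|L_K^{1/2}(\cdot)\|_K$ coincides with $\|\cdot\|_\rho=\|\mathcal L_K^{1/2}(\cdot)\|_K$ (both equal $\sqrt{\sum_\ell a_\ell^2\sigma_\ell}$ in the Mercer basis). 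With $u=1/2$ the two noise prefactors collapse to $(2+\mu(\sigma_m^D)^{-1})\mathcal Q_{D,\mu}^2$, so that
\begin{equation*}
\|f_D-f^*\|_\rho\leq \min_{\mu>0}(2+\mu(\sigma_m^D)^{-1})\mathcal Q_{D,\mu}^2\,\mathcal P_{D,\mu}+\min_{\lambda>0}\{\cdots\},
\end{equation*}
where the last minimum is exactly the noise-free bound of Theorem~\ref{Theorem:noise-less-operator}.

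The second (noise-free) summand has already been converted into spectral form in the proof of Theorem~\ref{Theorem:noise-less-eigenvalue}, so I would quote that result verbatim to produce the $C_1\log^6\frac{16}{\delta}\{\cdots\}$ term, requiring no new work there. For the first (noise) summand I would invoke the spectral surrogates. Using $\sigma_{\ell,\mathbb K}=m\sigma_\ell^D$ from \eqref{operator-matrix-eigenvalue}, I rewrite $\mu(\sigma_m^D)^{-1}=\mu m/\sigma_{m,\mathbb K}$. The estimate $\mathcal Q_{D,\mu}\leq C(\mathcal A_{D,\mu}+1)$, valid with high confidence, is supplied by the operator-difference/spectrum dictionary collected in Appendix~B (the same bound already underlies Theorem~\ref{Theorem:noise-less-eigenvalue}), and accounts for the factor $(\mathcal A_{D,\mu}+1)^2$.

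The crux is the probabilistic bound on $\mathcal P_{D,\mu}$ from \eqref{Def.P}, namely $\mathcal P_{D,\mu}\leq C\sqrt{\mu}\,\mathcal A_{D,\mu}$ up to a logarithmic factor in $1/\delta$. Here I would use the identity $L_{K,D}f^*-S_D^Ty_D=\frac1m\sum_i\varepsilon_iK_{x_i}$ and apply a Hilbert-space Bernstein/Pinelis inequality \cite{Pinelis1994,Caponnetto2007} to the mean-zero summands $\xi_i=(L_K+\mu I)^{-1/2}\varepsilon_iK_{x_i}$. From $E[\varepsilon_i]=0$, $|\varepsilon_i|\leq\gamma$, and $\|K_x\|_K\leq\kappa$ one gets the almost-sure bound $\|\xi_i\|_K\leq\gamma\kappa\mu^{-1/2}$ and a variance controlled by $\gamma^2\,\mathrm{tr}\bigl((L_K+\mu I)^{-1}L_{K,D}\bigr)$. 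The concentration then yields a bound of the shape $\bigl(\frac1{m\mu}+\frac1{\sqrt{m\mu}}\bigr)$ times the square root of an effective dimension; converting the population inverse to the empirical spectrum (again via the Appendix~B dictionary) turns the effective dimension into $\sum_\ell\sigma_{\ell,\mathbb K}/(\mu m+\sigma_{\ell,\mathbb K})$, exactly the quantity inside $\mathcal A_{D,\mu}$, so that $\sqrt{\mu}\,\mathcal A_{D,\mu}$ emerges after multiplying through by $\sqrt{\mu}$.

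Finally I would merge the high-probability events (the bound on $\mathcal Q_{D,\mu}$, the bound on $\mathcal P_{D,\mu}$, and the noise-free conversion) by a union bound, rescaling each confidence and absorbing the resulting constants, with the product of the individual logarithmic prices accumulating into $\log^6\frac{16}{\delta}$; collecting $\kappa,\gamma,\|f^*\|_\infty$ into $C_2$ and $\kappa,\|h^*\|_\rho,r$ into $C_1$ then gives \eqref{noise-eigenvalue}. I expect the main obstacle to be the $\mathcal P_{D,\mu}$ step: controlling the variance of $\frac1m\sum_i\varepsilon_iK_{x_i}$ by the \emph{empirical} effective dimension rather than the population one, without inflating the power of $(\mathcal A_{D,\mu}+1)$ beyond the two copies already furnished by $\mathcal Q_{D,\mu}^2$, and arranging the several failure probabilities so that their logarithmic factors assemble precisely into $\log^6\frac{16}{\delta}$.
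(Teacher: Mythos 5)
Your proposal is correct and follows essentially the same route as the paper: specialize Theorem~\ref{Theorem:Stability-via-operator} to $u=1/2$, reuse the noise-free spectral bound of Theorem~\ref{Theorem:noise-less-eigenvalue}, convert $(\sigma_m^D)^{-1}$ to $\mu m/\sigma_{m,\mathbb K}$ via \eqref{operator-matrix-eigenvalue}, and bound $\mathcal Q_{D,\mu}^2\mathcal P_{D,\mu}$ by $(\mathcal A_{D,\mu}+1)^2\sqrt{\mu}\,\mathcal A_{D,\mu}$ up to $\log^6\frac{16}{\delta}$ using the Appendix~B machinery (the paper packages your Pinelis-concentration-plus-effective-dimension-conversion step for $\mathcal P_{D,\mu}$ as Lemma~\ref{Lemma:functional-difference} combined with Lemma~\ref{Lemma:Effective dimension}, i.e.\ Lemma~\ref{Lemma:Q} with $M=\|f^*\|_\infty+\gamma$). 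The only cosmetic deviation is that you concentrate the noise sum directly with $|\varepsilon_i|\le\gamma$ rather than invoking the cited bound with $|y_i|\le\|f^*\|_\infty+\gamma$, which changes nothing of substance.
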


It is noteworthy to mention that (\ref{model-2}) can tolerate a noise level comparable to the magnitude of  $y_D$.  Therefore, besides the  noise-free approximation error (the second term in the right-hand side of (\ref{noise-eigenvalue})), it requires an additional term involving the smallest  eigenvalue of $\mathbb K$ to reflect the stability of kernel interpolation. Such an additional term presents a restriction on the selection of the kernel of the input dimension $d$. For noise-free data,  Theorem \ref{Theorem:Stability-via-eigenvalue} illustrates the the eigenvalues of the kernel matrix the smaller the better, implying both small $d$ and kernel with fast eigenvalue decaying. This situation changes dramatically when the data are noised. In particular, as shown in the next section or Appendix C, the smallest eigenvalue of kernel matrix increases with respect to $d$. This makes our analysis be available for data sampled in a high dimensional input space.

Since analysis of approximating noisy data has been conducted in \cite{Smale2007,Caponnetto2007,SteinwartHS,Lin2017} by using the well known kernel ridge regression (KRR) algorithm:
\begin{equation}\label{KRR}
    f_{D,\lambda}=\arg\min_{f\in\mathcal H_K}\frac1m\sum_{i=1}^m(f(x_i)-y_i)^2+\lambda\|f\|_K^2, \qquad \lambda>0.
\end{equation}
The main reason of introducing the regularization term is to overcome well known over-fitting phenomenon and enhance the stability of the approximant. Our algorithm can be regarded as a kernel ``ridgeless'' regression via taking $\lambda=0$. Theorem \ref{Theorem:Stability-via-eigenvalue} together with eigenvalue estimate in the next section and Appendix C then implies that if $d$ is sufficiently large, there is not over-fitting for kernel interpolation.

Finally, we   quantify the approximation error  for kernel interpolation of trans-native space data in terms of the spectrum of $\mathbb K$ in the following theorem.

\begin{theorem}\label{Theorem:native-barrier-without-noise-eigenvalue}
	Let $0<\delta<1$. Suppose that $D=\{(x_i,y_i)\}_{i=1}^m$ satisfies (\ref{model-1}), that $f^*$ is continuous and satisfies (\ref{regularity-assump}) with $0<r<1/2$, and that  $\Xi=\{x_i\}_{i=1}^m$ are  drawn i.i.d. according to $\rho_X$. Then with confidence at least $1-\delta$, there holds
	\begin{eqnarray}\label{Native-barrier-error-eigenvalue}
	  \|f^*-f_D\|_\rho &\leq& C_3\log^{6}\frac{24}\delta
	\min_{\lambda>0}\big\{
	(1+(1+\lambda m(\sigma_{m,\mathbb K})^{-1})
	\mathcal A_{D,\lambda}+1)^{2r+2}    \lambda^r\nonumber\\
	&+&
	((1+\lambda m(\sigma_{m,\mathbb K})^{-1}) +
	\lambda^{r-1/2}) ( \mathcal A_{D,\lambda}+1)^{2}\sqrt{\lambda}\mathcal A_{D,\lambda}  \big\},
	\end{eqnarray}
	where $C_3$ is a constant depending only on $\kappa$, $\|h\|_\rho$, $\|f^*\|_\infty$ and $r$.
\end{theorem}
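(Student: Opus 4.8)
The plan is to treat Theorem~\ref{Theorem:native-barrier-without-noise-eigenvalue} as the data-dependent, spectral incarnation of the operator-level estimate in Theorem~\ref{Theorem:native-barrier-without-noise}: I would start from the deterministic bound \eqref{Native-barrier-error} and translate each operator-theoretic quantity appearing there --- namely $\mathcal Q_{D,\lambda}$, $\mathcal W_{D,\lambda}$, $\mathcal U_{D,\lambda,f^*}$ and $\sigma_m^D$ --- into a quantity governed by the spectrum $\{\sigma_{\ell,\mathbb K}\}_{\ell=1}^m$ of the kernel matrix, and then take a union bound over the probabilistic events involved. Since \eqref{Native-barrier-error} already holds for every realization of $\Xi$, the only stochastic content is the estimation of $\mathcal Q$, $\mathcal W$ and $\mathcal U$ in terms of $\mathcal A_{D,\lambda}$.

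The translation proceeds in two layers. The eigenvalue $\sigma_m^D$ is handled exactly: by \eqref{operator-matrix-eigenvalue} we have $\sigma_{m,\mathbb K}=m\sigma_m^D$, hence $\lambda(\sigma_m^D)^{-1}=\lambda m(\sigma_{m,\mathbb K})^{-1}$ with no probabilistic cost, which produces the factors $1+\lambda m(\sigma_{m,\mathbb K})^{-1}$ in \eqref{Native-barrier-error-eigenvalue}. The remaining three quantities are estimated through the integral-operator concentration machinery collected in Appendix~B. Concretely, I would invoke, each with confidence at least $1-\delta/3$, bounds of the form $\mathcal Q_{D,\lambda}\le C(\mathcal A_{D,\lambda}+1)$, $\mathcal W_{D,\lambda}\le C\sqrt{\lambda}\,\mathcal A_{D,\lambda}$, and $\mathcal U_{D,\lambda,f^*}\le C\sqrt{\lambda}\,\mathcal A_{D,\lambda}$ (each up to a logarithmic factor), where the implied constants depend only on $\kappa$, $\|f^*\|_\infty$ and $r$, and the logarithmic factors issue from a Pinelis-type inequality for Hilbert-space-valued random variables. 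The essential point is that $\mathcal A_{D,\lambda}$ is built from the \emph{empirical} effective dimension $\sum_{\ell=1}^m \sigma_{\ell,\mathbb K}/(\lambda m+\sigma_{\ell,\mathbb K})=\mathrm{tr}\bigl(\mathbb K(\mathbb K+\lambda m I)^{-1}\bigr)$, so these must be genuinely data-dependent estimates.

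Substituting into \eqref{Native-barrier-error} and matching terms then yields the claim. The first summand $(1+(1+\lambda(\sigma_m^D)^{-1})\mathcal Q_{D,\lambda}^{2r+2})\lambda^r\|h^*\|_\rho$ becomes the first term of \eqref{Native-barrier-error-eigenvalue} via the bounds on $\sigma_m^D$ and $\mathcal Q_{D,\lambda}$, the exponent $2r+2$ being inherited verbatim. The second and third summands, carrying $\mathcal U_{D,\lambda,f^*}$ and $\mathcal W_{D,\lambda}$ respectively, both collapse to a common $(\mathcal A_{D,\lambda}+1)^2\sqrt{\lambda}\,\mathcal A_{D,\lambda}$ once $\mathcal Q^2_{D,\lambda}$ and the factors $\mathcal U,\mathcal W$ are replaced, and their coefficients combine into $(1+\lambda m(\sigma_{m,\mathbb K})^{-1})+\lambda^{r-1/2}$, which is exactly the second term. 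A union bound over the three events upgrades the confidence to $1-\delta$, the product of the logarithmic factors is absorbed into a single $\log^{6}(24/\delta)$, all of $\kappa$, $\|h^*\|_\rho$, $\|f^*\|_\infty$ and $r$ are collected into $C_3$, and taking $\min_{\lambda>0}$ on both sides finishes the argument.

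The main obstacle is the probabilistic control of $\mathcal U_{D,\lambda,f^*}$ together with the data-dependence of $\mathcal A_{D,\lambda}$. Unlike the noise-free $\mathcal H_K$ analysis, here $f^*\notin\mathcal H_K$ is merely continuous, so $\mathcal U_{D,\lambda,f^*}$ measures the fluctuation of the empirical operator $\mathcal L_{K,D}$ applied to a function outside the native space, and the concentration must rest on boundedness (hence $\|f^*\|_\infty$) rather than on any $\mathcal H_K$-norm; this is precisely why $\|f^*\|_\infty$ enters $C_3$. Moreover, because the effective dimension in $\mathcal A_{D,\lambda}$ is the empirical one and not the population quantity $\mathrm{tr}\bigl(L_K(L_K+\lambda I)^{-1}\bigr)$, the concentration inequalities have to be phrased so that the random spectrum of $\mathbb K$ appears on the right-hand side; reconciling the empirical and population effective dimensions up to universal constants on the same high-probability event is the delicate step, and is exactly where the estimates tabulated in Appendix~B do the heavy lifting.
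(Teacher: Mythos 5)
Your proposal is correct and follows essentially the same route as the paper: the paper's proof likewise starts from the deterministic bound \eqref{Native-barrier-error} of Theorem~\ref{Theorem:native-barrier-without-noise} with $u=1/2$, invokes Lemma~\ref{Lemma:Q} (Appendix B) to replace $\mathcal Q_{D,\lambda}$, $\mathcal W_{D,\lambda}$ and $\mathcal U_{D,\lambda,f^*}$ by $\mathcal A_{D,\lambda}$-expressions on a union event (whence the $\log^{6}(24/\delta)$ factor), and converts $\sigma_m^D$ to $\sigma_{m,\mathbb K}/m$ via \eqref{operator-matrix-eigenvalue}. Your identification of the empirical-versus-population effective dimension as the delicate step is exactly what Lemma~\ref{Lemma:Effective dimension} handles inside Lemma~\ref{Lemma:Q}.
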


By adeptly coordinating  and manipulating decay rates and bandwidths of Fourier transforms,  Narcowich et al \cite{Narcowich2004, Narcowich2006} found a marvelous way of projecting the approximation power of a higher-order Sobolev spline kernel into a larger RKHS associated with a lower-order Sobolev spline kernel; see also \cite{levesley-sun}. This method  depends in a crucial way on the quasi-uniformity of sampling-point distribution. In contrast, Theorem \ref{Theorem:native-barrier-without-noise-eigenvalue} only requires the presence of the spectrum of the underlying kernel matrix to achieve the desired stochastic approximation goal, the passage of which is reflected in the appearance of the extra quantity $\lambda m(\sigma_{m,\mathbb K})^{-1}$ in the error estimate. This is noticeably different from the case $f^*\in\mathcal H_K$.
	
The estimate given in Theorem \ref{Theorem:native-barrier-without-noise-eigenvalue} strongly indicates the importance of a well-conditioned  kernel matrix in overcoming ``the native space barrier". In this way, our analysis is only suitable for  high dimensional data.

\section{Spectrum  Analysis for Random Kernel Matrices}\label{sec.spectrum}

In this section, we assume that
$\Xi$ is generated by $m$-independent copies of the uniformly distributed random variable on $\mathcal X,$ and estimate the spectra of the ensuing random kernel matrices. Spectral analysis for other probabilistic distribution and dot product kernels can be found in \cite{Elkaroui2010,Liang2020}.
Throughout this section, we work with radial kernels. 

Let $(\sigma_\ell,\phi_\ell/\sqrt{\sigma_\ell})$ be the eigen-pairs of the integral operator $\mathcal L_{K}$ (defined in \eqref{integral-1}). For the special case in which  $\mathcal X$ is the unit (open) ball of $\R^d$, Steinwart at al \cite{SteinwartHS} showed that eigenvalues $\sigma_\ell$ associated with the reproducing kernel of the Sobolev space $W^{\tau}_2(\mathcal X)$ $(\tau > d/2)$ satisfy $\sigma_\ell \leq c_0 \ell^{-2 \tau/d}, \; \ell \in \Z_+$, where $c_0$ is an absolute constant. (These are also referred to in the literature as Sobolev spline kernel.) That is, $\sigma_\ell$ satisfy inequality \eqref{Eigenvalue-decay-ass-alg} below.
Inspecting pertinent work in \cite{Blanchard2012} and \cite{Belkin2018}, one concludes that eigen-values $\sigma_\ell$ associated with Gaussian kernels satisfy inequality \eqref{Eigenvalue-decay-ass-exp} below.  Accordingly, we assume in the sequel that the eigen-value sequence of the integral operator $\mathcal L_{K}$ (defined in (\ref{integral-1})) satisfies one of the following two inequalities:
\begin{align}
\sigma_\ell  \leq & c_0\  \ell^{-\beta},\quad \quad \beta>1; \label{Eigenvalue-decay-ass-alg}\\
\sigma_\ell \leq & c_0\  e^{-\alpha\ \ell^{1/d}}, \;\ \alpha>0,\label{Eigenvalue-decay-ass-exp}
\end{align}
in which $c_0>0$ is an absolute constant. 
The following proposition gives an upper-bound estimate of $\mathcal A_{D,\lambda}$ under the above conventions.

\begin{proposition}\label{Proposition:uniform-spectrum}
	Let $0<\delta<1$ be given. Then for any $0 < \lambda \le 1$, the following inequalities hold true with confidence $1-\delta$,
	\begin{eqnarray*}
	\mathcal A_{D,\lambda}\leq  C_4 \left(\frac{1}{m\lambda}+\frac{1}{\sqrt{m\lambda}}\right)\left(1+\frac{1}{m\lambda}\right)\log^2\frac{4}{\delta}
	\left\{\begin{array}{ll}
	\lambda^{-1/(2\beta)},  & \quad \mbox{if (\ref{Eigenvalue-decay-ass-alg})},\\
	\sqrt{d!}\ \alpha^{-d/2}\  \log^{d/2}\frac1\lambda, & \quad \mbox{if (\ref{Eigenvalue-decay-ass-exp})},
	\end{array}
	\right.
	\end{eqnarray*}
	where $C_4$ is a constant depending only on $c_0$.
\end{proposition}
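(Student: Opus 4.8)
The plan is to peel $\mathcal A_{D,\lambda}$ apart into the prefactor $\frac{1}{m\lambda}+\frac{1}{\sqrt{m\lambda}}$, which is already exactly the one appearing in the claimed inequality, and the remaining factor $\max\{1,\sqrt{\mathcal N_D(\lambda)}\}$, where
$$\mathcal N_D(\lambda):=\sum_{\ell=1}^m\frac{\sigma_{\ell,\mathbb K}}{\lambda m+\sigma_{\ell,\mathbb K}}=\sum_{\ell=1}^m\frac{\sigma_\ell^D}{\lambda+\sigma_\ell^D}=\mathrm{tr}\bigl(L_{K,D}(L_{K,D}+\lambda I)^{-1}\bigr)$$
is the empirical effective dimension; the first rewriting uses $\sigma_{\ell,\mathbb K}=m\sigma_\ell^D$ from \eqref{operator-matrix-eigenvalue}. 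Thus it suffices to show, with confidence $1-\delta$, that $\sqrt{\mathcal N_D(\lambda)}$ is at most $C_4(1+\frac1{m\lambda})\log^2\frac4\delta$ times the stated $d$-dependent expression.

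The second step, and the probabilistic heart of the argument, is to transfer the empirical effective dimension to the population effective dimension $\mathcal N_K(\lambda):=\mathrm{tr}\bigl(L_K(L_K+\lambda I)^{-1}\bigr)=\sum_{\ell\ge1}\frac{\sigma_\ell}{\lambda+\sigma_\ell}$. Using the factorization $L_{K,D}+\lambda I=(L_K+\lambda I)^{1/2}\bigl(I+(L_K+\lambda I)^{-1/2}(L_{K,D}-L_K)(L_K+\lambda I)^{-1/2}\bigr)(L_K+\lambda I)^{1/2}$, I would first derive the deterministic domination $(L_{K,D}+\lambda I)^{-1}\preceq\mathcal Q_{D,\lambda}^2(L_K+\lambda I)^{-1}$, where $\mathcal Q_{D,\lambda}^2=\|(L_{K,D}+\lambda I)^{-1/2}(L_K+\lambda I)^{1/2}\|^2$ is the quantity from \eqref{Def.Q}, and conclude
$$\mathcal N_D(\lambda)\le\mathcal Q_{D,\lambda}^2\,\mathrm{tr}\bigl((L_K+\lambda I)^{-1}L_{K,D}\bigr).$$
I would then control the two random factors separately: a Hilbert-space Bernstein inequality (\cite{Pinelis1994}, as applied in \cite{Lin2017,Guo2017,Blanchard2016} and summarized in Appendix B) bounds the self-adjoint perturbation and hence $\mathcal Q_{D,\lambda}^2$, while the scalar $\mathrm{tr}\bigl((L_K+\lambda I)^{-1}L_{K,D}\bigr)=\frac1m\sum_i\langle K_{x_i},(L_K+\lambda I)^{-1}K_{x_i}\rangle_K$ is an average of i.i.d.\ terms with mean $\mathcal N_K(\lambda)$ and range at most $\kappa^2/\lambda$, so a scalar Bernstein inequality concentrates it around $\mathcal N_K(\lambda)$. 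Combining the two gives $\mathcal N_D(\lambda)\le c\,(1+\frac1{m\lambda})^2\log^4\frac4\delta\,\mathcal N_K(\lambda)$ for an absolute constant $c$, which after taking square roots is precisely the required form.

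The final step is deterministic: estimate $\mathcal N_K(\lambda)=\sum_\ell\frac{\sigma_\ell}{\lambda+\sigma_\ell}$ under the two decay hypotheses. For the algebraic decay \eqref{Eigenvalue-decay-ass-alg} I would split the sum at the crossover index $\ell^\ast\sim(c_0/\lambda)^{1/\beta}$, bounding the first $\ell^\ast$ summands by $1$ and each later summand by $\sigma_\ell/\lambda\le c_0\ell^{-\beta}/\lambda$ and comparing the tail with $\int_{\ell^\ast}^\infty x^{-\beta}\,dx$; both contributions are of order $\lambda^{-1/\beta}$, giving $\sqrt{\mathcal N_K(\lambda)}\le c\,\lambda^{-1/(2\beta)}$. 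For the exponential decay \eqref{Eigenvalue-decay-ass-exp} I would instead use the elementary inequality $\frac{t}{\lambda+t}\le(t/\lambda)^s$, valid for all $t\ge0$ and $0\le s\le1$, to obtain $\mathcal N_K(\lambda)\le\lambda^{-s}\sum_\ell\sigma_\ell^s\le c_0^s\lambda^{-s}\sum_\ell e^{-\alpha s\ell^{1/d}}$, and then the integral comparison $\sum_\ell e^{-\alpha s\ell^{1/d}}\le\int_0^\infty e^{-\alpha sx^{1/d}}\,dx=d!\,(\alpha s)^{-d}$ (the substitution $u=\alpha s x^{1/d}$ turns the integral into $\frac{d}{(\alpha s)^d}\Gamma(d)$). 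This is exactly where $d!$ enters. Choosing the balancing value $s=1/\log(1/\lambda)\in(0,1]$ makes $\lambda^{-s}=e$ and $s^{-d}=\log^d(1/\lambda)$, so $\mathcal N_K(\lambda)\le c\,d!\,\alpha^{-d}\log^d(1/\lambda)$ and therefore $\sqrt{\mathcal N_K(\lambda)}\le c'\sqrt{d!}\,\alpha^{-d/2}\log^{d/2}(1/\lambda)$.

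I expect the main obstacle to be the second step: producing the concentration with the precise $(1+\frac1{m\lambda})$ dependence and only a power of $\log\frac4\delta$, while ensuring that the constant $C_4$ genuinely depends on $c_0$ alone and is free of $d$. The delicate bookkeeping lies in verifying that neither the perturbation bound for $\mathcal Q_{D,\lambda}^2$ nor the variance proxy in the Bernstein step secretly reintroduces a dimension-dependent constant, so that the entire $d$-dependence is confined to the explicit factor $\sqrt{d!}\,\alpha^{-d/2}\log^{d/2}(1/\lambda)$ produced in the last step.
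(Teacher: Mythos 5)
Your overall architecture coincides with the paper's: factor $\mathcal A_{D,\lambda}$ into the prefactor times $\max\{1,\sqrt{\mathcal N_D(\lambda)}\}$, transfer the empirical effective dimension $\mathcal N_D(\lambda)$ to its population counterpart $\mathcal N(\lambda)$ with high probability, and then bound $\mathcal N(\lambda)$ deterministically under \eqref{Eigenvalue-decay-ass-alg} or \eqref{Eigenvalue-decay-ass-exp}. Your first and third steps are correct. In fact your treatment of the exponential case --- the inequality $t/(\lambda+t)\le(t/\lambda)^s$, the comparison $\sum_{\ell\ge1}e^{-\alpha s\ell^{1/d}}\le\int_0^\infty e^{-\alpha sx^{1/d}}\,dx=d!\,(\alpha s)^{-d}$, and the choice $s=1/\log(1/\lambda)$ --- is a clean substitute for the paper's Lemma \ref{multi-log-esti} and yields the same $d!\,\alpha^{-d}\log^d(1/\lambda)$ bound for $\lambda\le e^{-1}$ (both your argument and the paper's need a trivial patch near $\lambda=1$, where $\log^{d/2}(1/\lambda)$ drops below the floor coming from the $\max\{\cdot,1\}$ in \eqref{Def.A}).

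The genuine gap is your second step, which is exactly the point where the paper does not argue at all but instead quotes Lemma \ref{Lemma:Effective dimension}, i.e.\ \cite[Corollary 2.2]{Mucke2018}: with confidence $1-\delta$, $\sqrt{\max\{\mathcal N_D(\lambda),1\}}\le 17(1+\frac{1}{m\lambda})\sqrt{\max\{\mathcal N(\lambda),1\}}\log^2\frac4\delta$. This is precisely the statement you set out to re-derive, and your sketch does not reach it. Your reductions are fine up to the product bound $\mathcal N_D(\lambda)\le\mathcal Q_{D,\lambda}^2\,\mathrm{Tr}\bigl((L_K+\lambda I)^{-1}L_{K,D}\bigr)$, and the scalar Bernstein step for the trace factor works (mean $\mathcal N(\lambda)$, range $\kappa^2/\lambda$). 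The failure is in the factor $\mathcal Q_{D,\lambda}^2$. The only Bernstein-type bound for $\mathcal Q_{D,\lambda}$ that holds without restrictions on $m\lambda$ --- the paper's Lemma \ref{Lemma:Operator-product}, taken from \cite{Guo2017} --- reads $\mathcal Q_{D,\lambda}\le\sqrt2\,(\mathcal B_{m,\lambda}/\sqrt\lambda+1)\log\frac2\delta$ with $\mathcal B_{m,\lambda}/\sqrt\lambda=\frac{2\kappa}{\sqrt{m\lambda}}\bigl(\frac{\kappa}{\sqrt{m\lambda}}+\sqrt{\mathcal N(\lambda)}\bigr)$, so it carries the term $\kappa\sqrt{\mathcal N(\lambda)}/\sqrt{m\lambda}$. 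Squaring it and multiplying by the trace factor, which is of size $\mathcal N(\lambda)$, produces a term of order $\kappa^2\mathcal N(\lambda)^2/(m\lambda)$, quadratic in $\mathcal N(\lambda)$; this cannot be absorbed into $c\,(1+\frac1{m\lambda})^2\log^4\frac4\delta\,\max\{\mathcal N(\lambda),1\}$. Concretely, under \eqref{Eigenvalue-decay-ass-alg} with $\lambda\asymp1/m$ your route gives $\mathcal A_{D,\lambda}$ of order $\lambda^{-1/\beta}$ (up to logarithms) instead of the stated $\lambda^{-1/(2\beta)}$, losing a full power. The alternative you hint at --- controlling the symmetric perturbation $\|(L_K+\lambda I)^{-1/2}(L_K-L_{K,D})(L_K+\lambda I)^{-1/2}\|$ and inverting a Neumann series, which does keep $\mathcal N(\lambda)$ out of the $\mathcal Q$-bound --- works only on the event that this norm stays bounded away from $1$, hence requires $m\lambda\ge c\log\frac2\delta$; that is the very ``mild restriction on $m$ and $\delta$'' that the paper attributes to \cite{Blanchard2016} and that Proposition \ref{Proposition:uniform-spectrum} does not impose. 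Obtaining the restriction-free factor $(1+\frac1{m\lambda})\log^2\frac4\delta$ is the actual content of \cite{Mucke2018} and needs an idea beyond the two concentration bounds you combine; as written, your step two would fail. The fix consistent with the paper is simply to invoke Lemma \ref{Lemma:Effective dimension} at that point.
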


We devote the rest of the section to lower bounds of the minimal eigenvalue of the kernel matrix $\mathbb K$, which has been studied extensively in the radial basis function research community; see \cite{Narcowich1991,Ball1992,Schaback1995,Wendland2004} and the references therein. The main theme of the research is to bound 
the smallest eigenvalue of $\mathbb K$ in terms of  the separation radius. For Gaussian kernel $G_a$ and Sobolev spline kernel
$S_\tau$ defined respectively by:
\begin{eqnarray*}
G_a(x,x')&=&e^{-a\|x-x'\|_2^2}\; ( a>0),  \\
S_\tau(x,x')&=&\frac{2\pi^d}{\Gamma(\tau)}\mathbb B_{\tau-d/2}(\|x-x'\|_2)(\|x-x'\|_2/2)^{\tau-d/2} \; (\tau>d/2),
\end{eqnarray*}
where $ \mathbb B_{\nu}(t)$ is the modified Bessel function of the second kind, we find in \cite[Table 12.1]{Wendland2004} the following estimates:
\begin{eqnarray}
\sigma_{m,G_a}&\geq&
\frac1{2^{2d+1}\Gamma(d/2+1)}\left(\frac{6.38d}{q_{_\Xi}\sqrt{a}}\right)^d
\exp\left[-\left(\frac{6.38d}{q_{_\Xi}\sqrt{a}}\right)\right],\label{stability-for-Gaussian}\\
\sigma_{m,S_\tau}  &\geq&
\frac{q_{_\Xi}^{2\tau-d}}{2^{2\tau+2d+1}\pi^{d/2}\Gamma(d/2+1)}
\frac1{(6.38d)^{2\tau-d}}\left(1+\frac{q_{_\Xi}^2}{162.8d^2}\right)^{-\tau}.\label{stability-for-Sobolev-111}
\end{eqnarray}
Making use of the two inequalities above, Lemma \ref{Lemma:mesh-separation} in Appendix C (or Lemma \ref{Lemma:separation-for-gaussian} in Appendix C for the normal distribution), we derive estimates for $\sigma_{m,G_a} $ and $\sigma_{m,S_\tau}$. These results join forces  with Proposition \ref{Proposition:uniform-spectrum} and approximation results in the previous section, and give stochastic  error estimates for kernel interpolations with many highly-applicable kernels. We present here such error estimates for kernel interpolations while  Sobolev spline kernels and Gaussian kernels are employed. 
%
%
\begin{corollary}\label{Corollary:noise-less-rate}
	Let $0<\delta<1$. If $K(\cdot,\cdot)= S_\tau(\cdot,\cdot)$ with $\tau>d/2$,  $\mathcal X=[0,1]^d$, $D=\{(x_i,y_i)\}_{i=1}^m$ satisfies (\ref{model-1}), $f^*$ satisfies (\ref{regularity-assump}) with $r\geq 1/2$, $\Xi$ is generated by $m$ independent copies of the random variable uniformly distributed in $\mathcal X$.
	Then with confidence at least $1-\delta$, we have
	\begin{equation}\label{noise-less-rate}
	\|f_D-f^*\|_\rho
	\leq
	C_5\log^{8}\frac{16}\delta\left\{\begin{array}{ll}
	m^{-\frac{2r\tau}{2\tau+d}}, & 1/2\leq r\leq 1,\\
	m^{-\frac{ 2\tau+d/2}{2\tau+d}}, & r>1,
	\end{array}
	\right.
	\end{equation}
	where $C_5$ is a constant depending only on $C_1$, $C_4$ and $r$.
\end{corollary}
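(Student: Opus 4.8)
The plan is to combine the operator-difference error bound of Theorem~\ref{Theorem:noise-less-eigenvalue} with the spectral estimate of Proposition~\ref{Proposition:uniform-spectrum}, and then to calibrate the free parameter $\lambda$ against the sample size $m$. Since $K=S_\tau$ is the Sobolev spline kernel, the eigenvalues of $\mathcal L_K$ obey the algebraic decay \eqref{Eigenvalue-decay-ass-alg} with $\beta=2\tau/d$ (this is exactly the decay $\sigma_\ell\le c_0\ell^{-2\tau/d}$ recorded above for $W_2^\tau$), and note that $\beta>1$ is equivalent to the standing hypothesis $\tau>d/2$. Because this corollary concerns the \emph{noiseless} model \eqref{model-1}, no lower bound on the smallest eigenvalue $\sigma_{m,\mathbb K}$ is needed; only Theorem~\ref{Theorem:noise-less-eigenvalue} and Proposition~\ref{Proposition:uniform-spectrum} enter. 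Both are valid with confidence $1-\delta$ for the same random sample $\Xi$, so I would first apply each at level $1-\delta/2$ and take a union bound, after which they hold simultaneously with confidence at least $1-\delta$.

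First I would insert the bound of Proposition~\ref{Proposition:uniform-spectrum} into $\mathcal A_{D,\lambda}$. Restricting to $0<\lambda\le 1$ with $m\lambda\ge 1$, the prefactor $\bigl(\tfrac1{m\lambda}+\tfrac1{\sqrt{m\lambda}}\bigr)\bigl(1+\tfrac1{m\lambda}\bigr)$ is of order $1/\sqrt{m\lambda}$, so in the algebraic case the proposition gives, for a constant $C$ depending only on $c_0$,
\[
\mathcal A_{D,\lambda}\ \le\ C\,\frac{1}{\sqrt{m\lambda}}\,\lambda^{-d/(4\tau)}\,\log^2\tfrac4\delta
\ =\ C\,m^{-1/2}\,\lambda^{-\frac{2\tau+d}{4\tau}}\,\log^2\tfrac4\delta .
\]

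The decisive step is the choice $\lambda=\lambda^\ast:=m^{-\frac{2\tau}{2\tau+d}}$. With this choice $\lambda^\ast\le 1$ and $m\lambda^\ast=m^{\frac{d}{2\tau+d}}\ge 1$, so the range constraints of the proposition and the prefactor simplification are both legitimate, and the algebraic part of the displayed bound equals $1$; hence $\mathcal A_{D,\lambda^\ast}$ is of order $\log^2(4/\delta)$ and $(\mathcal A_{D,\lambda^\ast}+1)$ is of logarithmic size. Substituting $\lambda^\ast$ into Theorem~\ref{Theorem:noise-less-eigenvalue} reads off both rates simultaneously: for $1/2\le r\le 1$ the dominant factor is $(\lambda^\ast)^r=m^{-\frac{2r\tau}{2\tau+d}}$, while for $r>1$ the factor $\frac{1}{\sqrt m}\sqrt{\lambda^\ast}=m^{-\frac12-\frac{\tau}{2\tau+d}}=m^{-\frac{2\tau+d/2}{2\tau+d}}$ appears, matching the two branches of \eqref{noise-less-rate}. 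Since only an upper bound is required, it is legitimate to evaluate the minimum over $\lambda$ at the single admissible value $\lambda^\ast$ rather than at the true minimizer.

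The main obstacle is bookkeeping rather than anything conceptual. I must verify that $\lambda^\ast$ simultaneously satisfies $0<\lambda^\ast\le 1$ and $m\lambda^\ast\ge 1$ (done above), and I must track how the logarithmic factors compound: the $\log^6(16/\delta)$ of Theorem~\ref{Theorem:noise-less-eigenvalue} combines with the power of $\log^2(4/\delta)$ carried by $\mathcal A_{D,\lambda^\ast}$ (entering to the first power when $r>1$, so that $\log^6\cdot\log^2=\log^8$), together with the $\delta/2$ rescaling from the union bound, to produce the stated $\log^{8}(16/\delta)$ factor in \eqref{noise-less-rate}. Finally, absorbing $c_0$, $C_1$, $C_4$, and the $r$-dependent constants into a single constant $C_5$ completes the estimate.
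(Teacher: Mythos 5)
Your proposal is correct and follows essentially the same route as the paper's own proof: apply Proposition~\ref{Proposition:uniform-spectrum} with $\beta=2\tau/d$, choose $\lambda=m^{-\frac{2\tau}{2\tau+d}}$ so that $\mathcal A_{D,\lambda}$ reduces to a purely logarithmic factor, and substitute into Theorem~\ref{Theorem:noise-less-eigenvalue} to read off both rates. Your explicit verification that $0<\lambda^\ast\le 1$ and $m\lambda^\ast\ge 1$, and your $\delta/2$ union bound for combining the two probabilistic events, are slightly more careful than the paper's write-up (which invokes both statements at level $1-\delta$ without rescaling), but they do not change the argument.
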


\begin{corollary}\label{Corollary:noise-less-rate-G}
	Let $0<\delta<1$. If $K(\cdot,\cdot)= G_a(\cdot,\cdot)$ with $a>0$,  $\mathcal X=[0,1]^d$, $D=\{(x_i,y_i)\}_{i=1}^m$ satisfies (\ref{model-1}), $f^*$ satisfies (\ref{regularity-assump}) with $r\geq 1/2$, $\Xi$ is generated by $m$ independent copies of the random variable uniformly distributed in $\mathcal X$.
	Then with confidence at least $1-\delta$, we have
	\begin{equation}\label{noise-less-rate-G}
	\|f_D-f^*\|_\rho
	\leq
	C_6\sqrt{d}a^{-d/2}\log^{8}\frac{16}\delta\left\{\begin{array}{ll}
	(m^{-1}\log^{d}m)^r, & 1/2\leq r\leq 1,\\
	m^{-1}\log^{d/2}m, & r>1,
	\end{array}
	\right.
	\end{equation}
	where   $C_6$ is a constant depending only on $C_1$, $C_4$ and $r$
\end{corollary}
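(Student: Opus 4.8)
The plan is to treat Corollary~\ref{Corollary:noise-less-rate-G} as the Gaussian instance of Theorem~\ref{Theorem:noise-less-eigenvalue}: once the spectral quantity $\mathcal A_{D,\lambda}$ is controlled through Proposition~\ref{Proposition:uniform-spectrum}, the right-hand side of \eqref{noise-less-eigenvalue} becomes an explicit function of $\lambda$, and it remains only to optimize. First I would assemble the two inputs. Because $G_a(x,x)=1$ we have $\kappa=1$, so the constant $C_1$ from Theorem~\ref{Theorem:noise-less-eigenvalue} hides no kernel-size factor. The Gaussian spectrum obeys the exponential decay \eqref{Eigenvalue-decay-ass-exp}; from the tensor-product structure of $G_a$ together with the estimates of \cite{Blanchard2012,Belkin2018}, the decay exponent $\alpha$ is proportional to $a$ with a $d$-dependent constant, and the crucial bookkeeping is that $\sqrt{d!}\,\alpha^{-d/2}$ collapses (after a Stirling reduction) to $\sqrt d\,a^{-d/2}=:P$. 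With this identification, Proposition~\ref{Proposition:uniform-spectrum} gives, with confidence $1-\delta$ and for $0<\lambda\le 1$,
\[
\mathcal A_{D,\lambda}\le C_4\left(\frac1{m\lambda}+\frac1{\sqrt{m\lambda}}\right)\left(1+\frac1{m\lambda}\right)P\,\log^{d/2}\tfrac1\lambda\,\log^2\tfrac4\delta .
\]

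Next I would substitute this into \eqref{noise-less-eigenvalue} and select $\lambda$. Since $\mathcal A_{D,\lambda}$ decreases in $\lambda$ while the explicit factors $\lambda^{r}$ (for $1/2\le r\le1$) and $\sqrt{\lambda}$ (for $r>1$) increase, the minimizer balances the two. Working in the regime $m\lambda\ge 1$, the displayed bound simplifies to a multiple of $P\,\log^{d/2}(1/\lambda)/\sqrt{m\lambda}$, whence $\mathcal A_{D,\lambda}$ is of order $1$ exactly when $m\lambda$ is of order $P^2\log^{d}(1/\lambda)$. Choosing $\lambda_*$ of order $m^{-1}P^2\log^{d}m$ (so that $\log(1/\lambda_*)$ is of order $\log m$) and inserting it into the two branches of \eqref{noise-less-eigenvalue} yields the rates $(m^{-1}\log^{d}m)^{r}$ for $1/2\le r\le 1$ and $m^{-1}\log^{d/2}m$ for $r>1$, each carrying the appropriate power of $P=\sqrt d\,a^{-d/2}$. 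A union bound over the confidence-$(1-\delta)$ events of Theorem~\ref{Theorem:noise-less-eigenvalue} and Proposition~\ref{Proposition:uniform-spectrum} (after halving $\delta$) secures overall confidence $1-\delta$, and the logarithmic factors $\log^{6}\tfrac{16}\delta$ and $\log^{2}\tfrac4\delta$ multiply to the stated $\log^{8}\tfrac{16}\delta$.

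The optimization itself is routine; the delicate part is the honest extraction of the factor $\sqrt d\,a^{-d/2}$. This demands the precise $d$- and $a$-dependence of the exponential-decay constant $\alpha$ for $G_a$ on $[0,1]^d$ (not merely its existence), followed by the Stirling reconciliation of $\sqrt{d!}\,\alpha^{-d/2}$ with $\sqrt d\,a^{-d/2}$. I expect this constant tracking to be the main obstacle, together with two consistency checks at the chosen scale: that $\lambda_*$ genuinely satisfies $0<\lambda_*\le 1$ and $m\lambda_*\ge 1$ in the asymptotic regime where the simplified Proposition bound is valid, and that the power of $P$ attached to each branch (to the first power for $r>1$, and the power arising from $(\mathcal A_{D,\lambda}+1)^{2r}$ for $1/2\le r\le1$) is tracked correctly into the final estimate \eqref{noise-less-rate-G}.
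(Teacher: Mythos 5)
Your proposal is correct and follows essentially the same route as the paper: apply Proposition \ref{Proposition:uniform-spectrum} in the exponential-decay case \eqref{Eigenvalue-decay-ass-exp} to bound $\mathcal A_{D,\lambda}$, substitute into \eqref{noise-less-eigenvalue}, choose $\lambda$, and intersect the two confidence events so that $\log^{6}\frac{16}{\delta}\cdot\log^{2}\frac{4}{\delta}$ produces the stated $\log^{8}\frac{16}{\delta}$. Two remarks: the paper's choice is simply $\lambda=m^{-1}$ (so $m\lambda=1$ and the $\log^{d/2}m$ factor rides inside $\mathcal A_{D,\lambda}$), which yields exactly the same rates as your $\lambda_*\sim m^{-1}P^{2}\log^{d}m$; and the bookkeeping you rightly flag as the main obstacle --- passing from the Proposition's $\sqrt{d!}\,\alpha^{-d/2}$ to the stated $\sqrt{d}\,a^{-d/2}$, and from $(\mathcal A_{D,\lambda}+1)^{2r}$ to a single first power of $\sqrt{d}\,a^{-d/2}$ in \eqref{noise-less-rate-G} --- is not actually carried out in the paper either, whose proof invokes the Proposition ``with $\alpha=a$'' and writes $\sqrt{d}\,a^{-d/2}$ outright, so your concern identifies a genuine looseness in the paper's constant tracking rather than a defect of your own plan.
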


\section{Numerical Results}\label{Sec.Experiment}

In this section,  we present results of both toy simulations and a real world data experiments to verify our theoretical assertions and show the performance of kernel interpolation in tackling high dimensional data. Given a randomly generated (according to uniform distributions as mentioned above) training set $D=\{(x_i,y_i)\}_{i=1}^m$, we construct an approximant $f_{D,\gamma}$ with kernel parameter $\gamma>0$ of the form:
\[
f_{D,\gamma}=\sum_{i=1}^m(\mathbb{G}^{-1}_\gamma y_D)_i G_\gamma(x_i,\cdot),
\]
in which $\mathbb{G}_\gamma$ denotes the corresponding kernel matrix, $(V)_i$ the $i$th component of the vector $v$. 

\subsection{Toy simulations}
This subsection conducts three  simulations to substantiate numerically our main theoretical findings. In the first simulation, we show that the spectrum of a kernel matrix $\mathbb{K}$ is a suitable barometer to gauge the behavior of $\|f_{D,\gamma} - f^*\|_\rho$ in high dimensional spaces, which offers strong numerical evidences to support the theoretical results. The second simulation aims to study the     role of dimensionality  in reflecting the approximation performance of interpolation, showing the necessity of high dimensionality for kernel interpolation. 
The last simulation is designed to be a comprehensive study of quasi-interpolation with different regularization parameters in high dimensional spaces to show the redundancy  of the regularization term for high dimensional data. 
We reiterate here  that quasi-interpolation reduces to interpolation if the regularization parameter is set to zero.

{\bf Simulation I.} In this simulation, we set $\mathcal X=[-1, 1]^d$, which we simply refer to as the cube in the sequel.  We generate $m\in\{500,600, \dots, 1500\}$ samples for training and the inputs 
$
\{x_i=(x_{i,1},x_{i,2},\dots,x_{i,d})^T\}_{i=1}^m
$
are independently drawn according to the uniform distribution on the cube. Then Gaussian kernel $G_\gamma(x, x')=\exp\left(-\frac{\gamma\|x - x'\|^2}{2}\right)$ with the kernel parameter $\gamma=0.05$ is employed for constructing the kernel matrix $\mathbb{K}$. We further define the main item of Theorem  \ref{Theorem:noise-less-eigenvalue} as the approximation error:
\begin{equation}
   \text{AE} =\left\{\begin{array}{ll}
	\min\limits_{\lambda>0}\{\lambda^r
	(\mathcal A_{D,\lambda}+1)^{2r}\} , & \quad 1/2\leq r\leq 1,\\
	\frac1{\sqrt{m}} \min\limits_{\lambda>0}\{\sqrt{\lambda}(\mathcal A_{D,\lambda}+1)\}, & \quad r>1.
	\end{array}
	\right.
\end{equation}

The corresponding outputs $\{y_i\}_{i=1}^m$ are generated from the following regression model:
\begin{equation}\label{simulation-noiseless-model}
y_i = f^*(x_i)  =\sum_{j=1}^d c_j\exp(-x_{i,j}^2),
\end{equation}
where the regression coefficients $(c_1,\dots,c_d)^T$ are sampled according to the uniform distribution on $[-1,1]^d$.
This means $f^*\in\mathcal{H}_K$, so we can set $r=1/2$ in our simulation study. We run 20 independent trials of the simulation and depict the mean values of AE and the rooted mean square error (RMSE) of 500 test samples in Figure \ref{fig:sim-1}.
\begin{figure}
	\centering
	\subfigure[]{\includegraphics[scale=0.33]{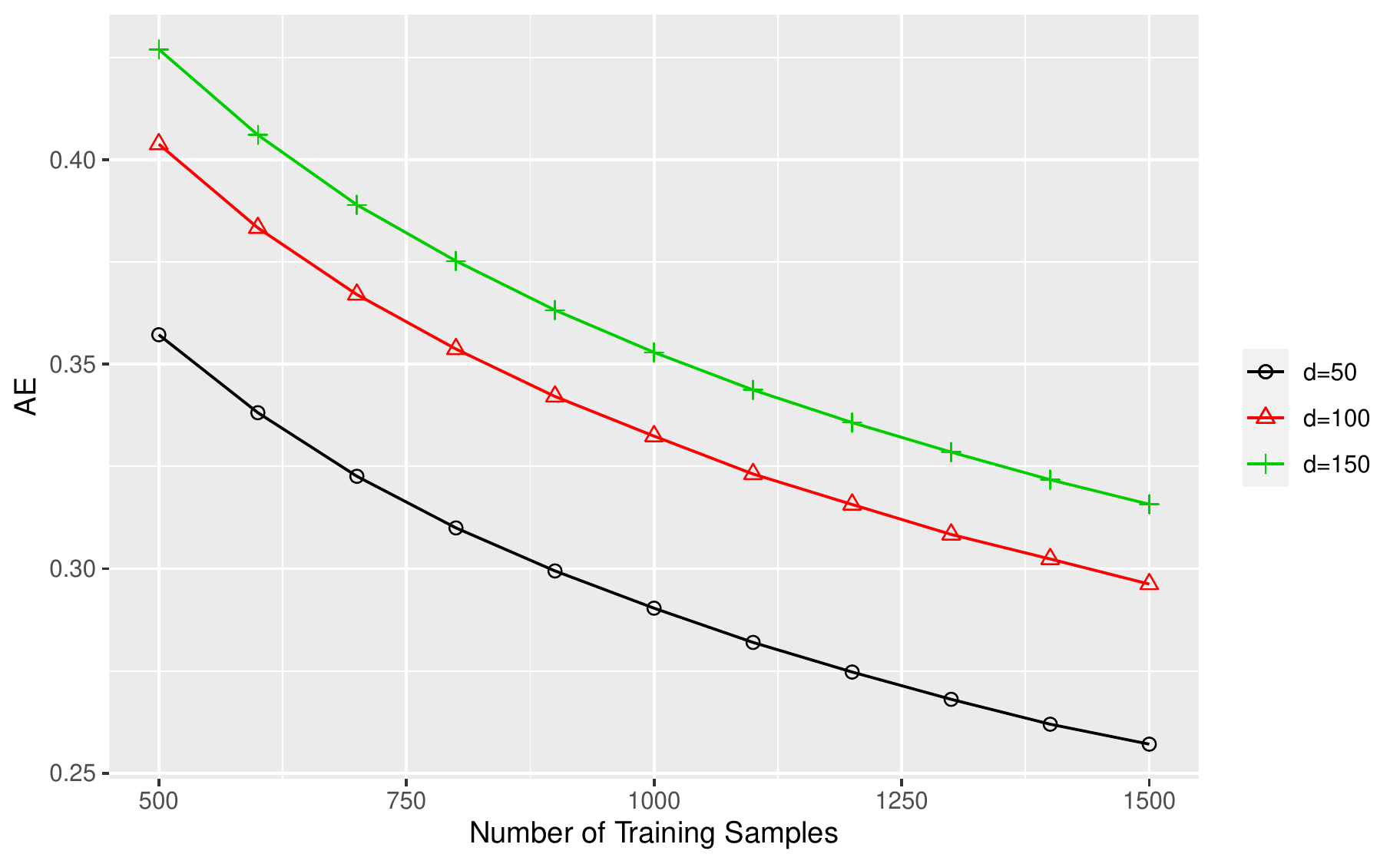}}
	\subfigure[]{\includegraphics[scale=0.33]{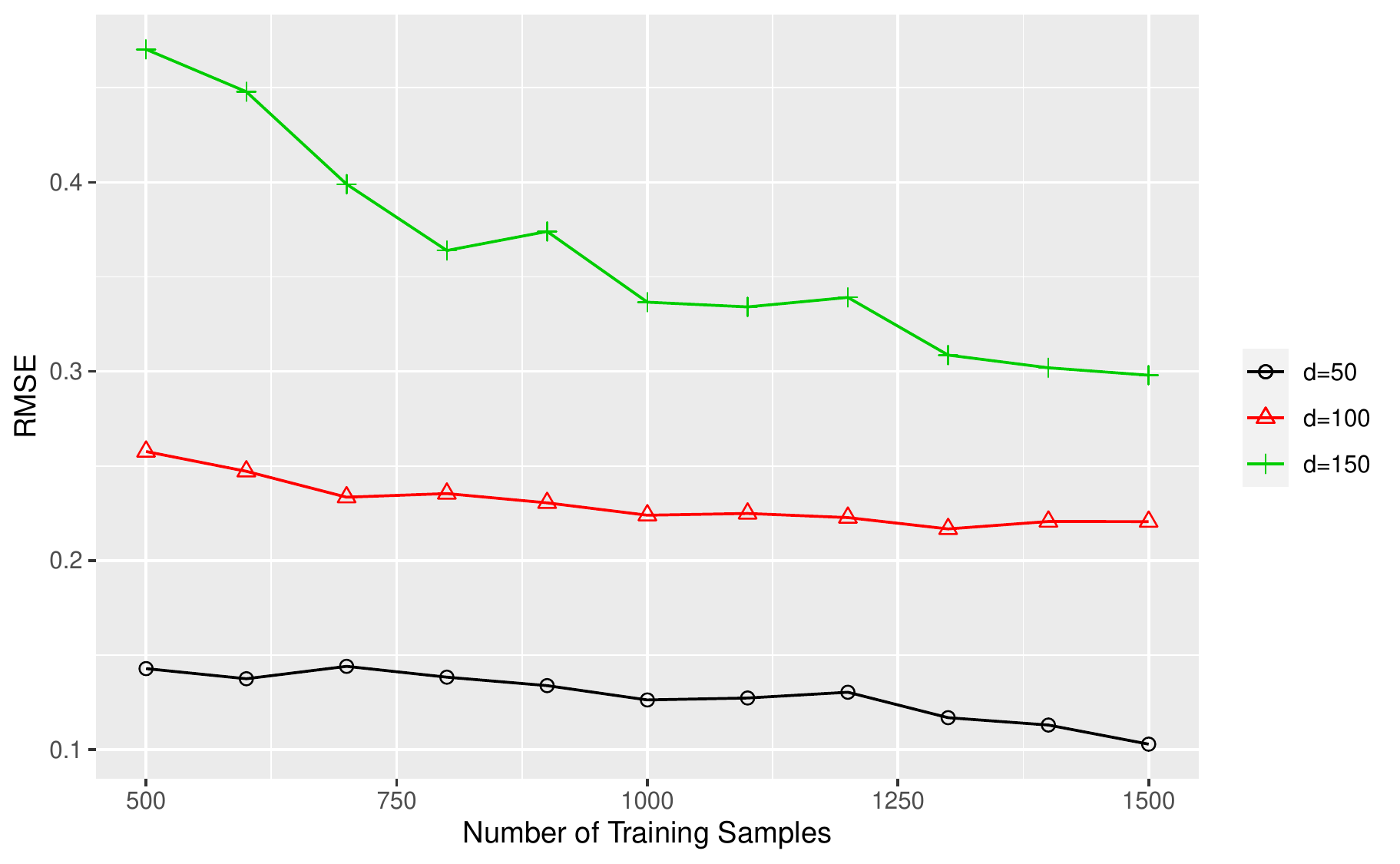}}
	\caption{Approximation error of kernel interpolation}
	\label{fig:sim-1}
\end{figure}

There are three important findings in Figure \ref{fig:sim-1}. At first, Figure  \ref{fig:sim-1} (b) exhibits a monotonously decreasing trends of RMSE with respect to the size of samples, which numerically verifies  Corollary \ref{Corollary:noise-less-rate-G}. Then,  Figure \ref{fig:sim-1} demonstrates that there is a close relation between RMSE of kernel interpolation and AE for kernel matrix in both their values and trends, showing that AE is an excellent upper bound of RMSE for kernel interpolation. This verifies Theorem \ref{Theorem:noise-less-eigenvalue}. Finally, although our derived approximation error is independent of the dimension $d$,   RMSE   behaves worse for larger $d$. The main reason is that we absorb
the dimension in the constant term, just as Corollary \ref{Corollary:noise-less-rate-G} purports to show.

{\bf Simulation II.} The simulation setting in this part is similar as that in Simulation I. The only difference is that we   set $\mathcal X=[-1, 1]^d$ with $d$ varying in $[50,100,\dots,500]$ and $m$ to be chosen from $\{300,600,900\}$. Before presenting our simulation, we at first show that the classical mesh norm (and separation radius) in \cite{Schaback1995,Schaback2000,Narcowich2002,Narcowich2004,Wendland2004,Narcowich2006,Narcowich2007} is unavailable to  gauge the approximation error in the high dimensional setting. In particular, it can be found in Figure \ref{fig:sim-2} that even for the separation radius (SR) increases with respect to $d$ and will larger than 1, provided $d$ is larger than a small value. As a result, the mesh norm is also larger than one and thus is not suitable to measure the approximation error by showing an approximation rate as $h_{\Xi}^\alpha$ for some $\alpha>0$. Differently, as shown in Figure \ref{fig:sim-2} (b),
the condition number as well as $(\sigma_{m,\mathbb K})^{-1}$ decreases with respect to $d$,  implying the well-conditioness of the kernel matrix for high-dimensional data. In this way, Figure \ref{fig:sim-2} verifies the feasibility of utilizing the spectrum of kernel matrix to take place of the mesh norm to gauge the approximation error for high dimensional data.
\begin{figure}
	\centering
	\subfigure[]{\includegraphics[scale=0.33]{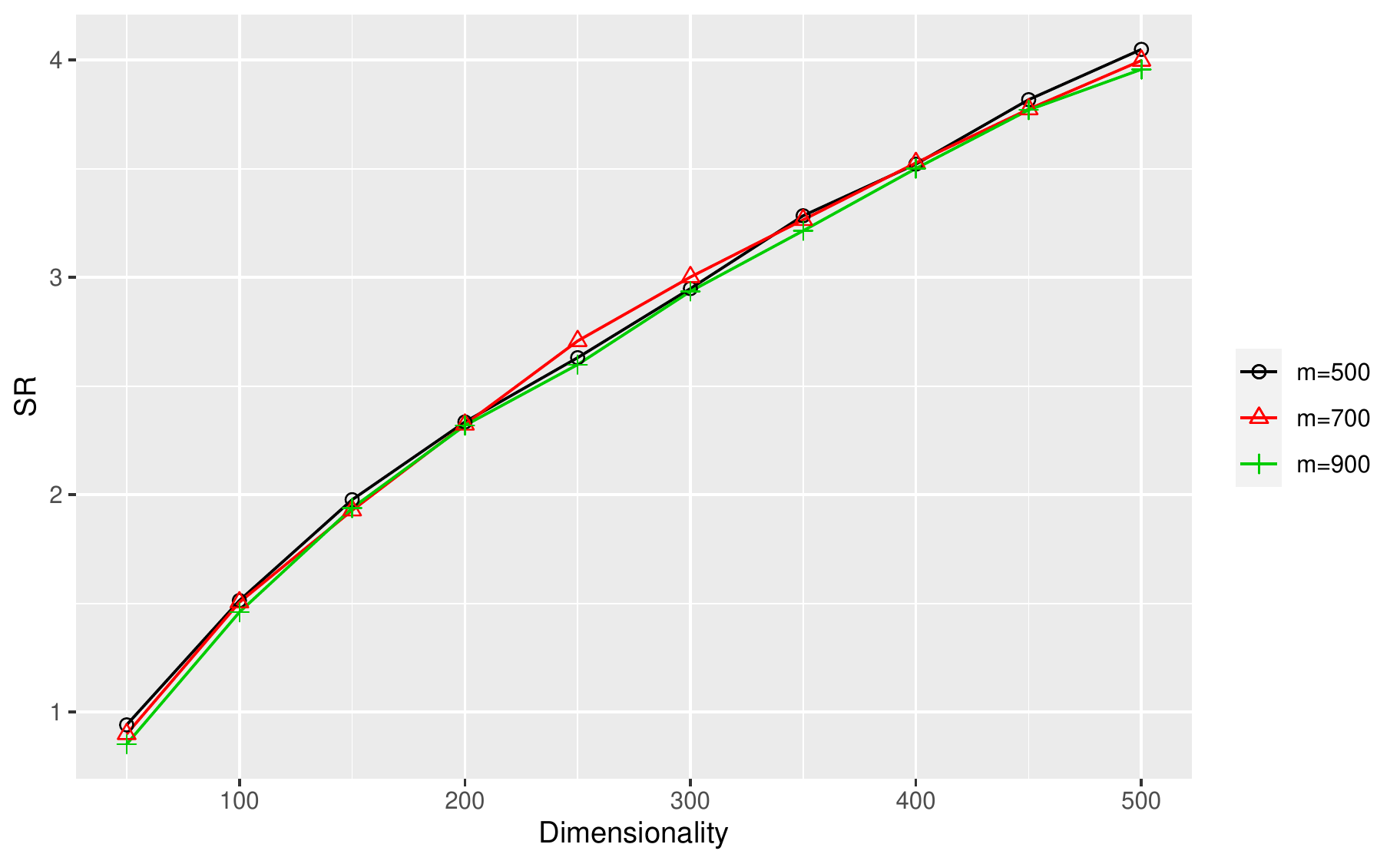}}
	\subfigure[]{\includegraphics[scale=0.30]{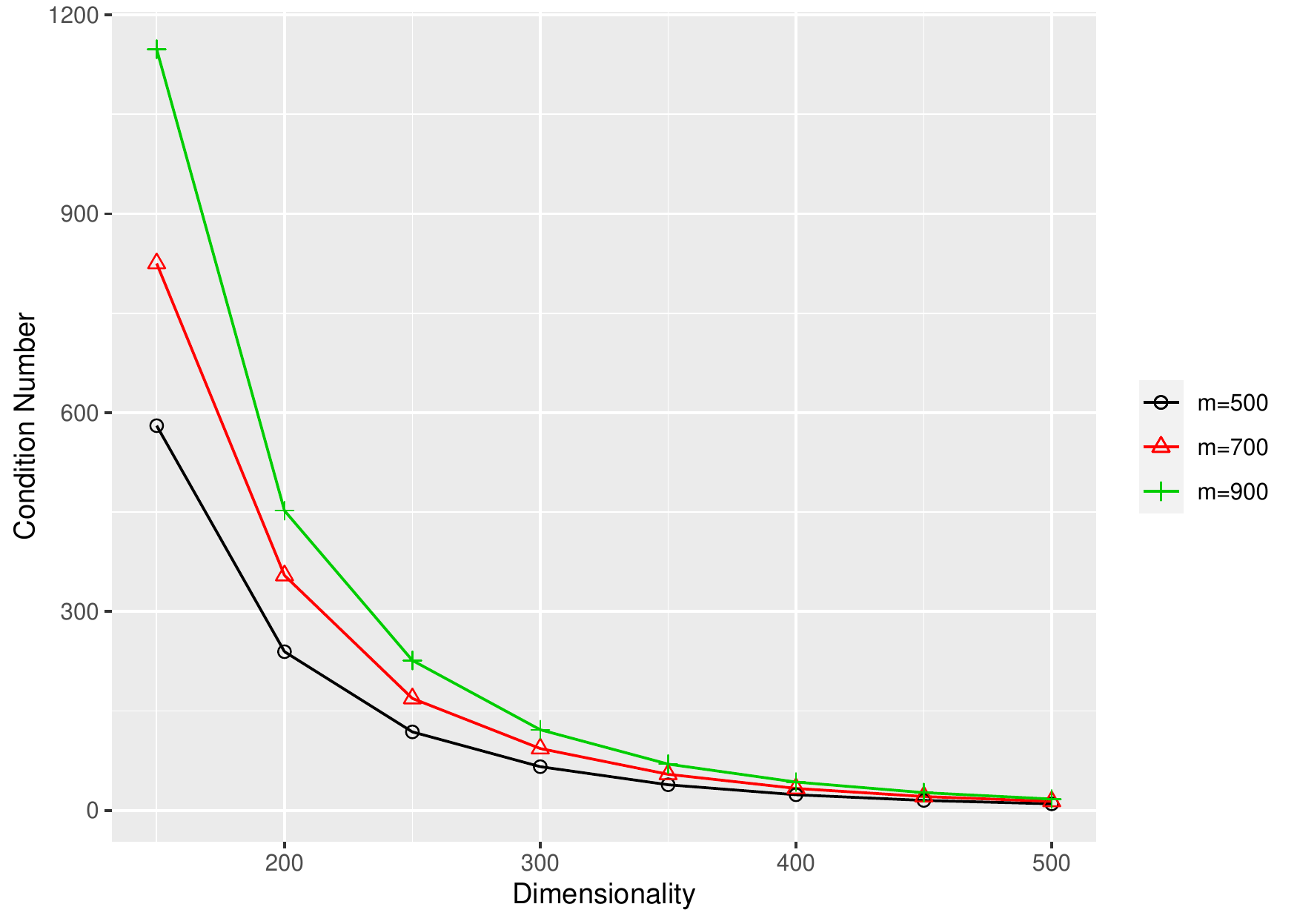}}
	\caption{Trends of separation radius and condition number of kernel matrix with respect to the dimension}
	\label{fig:sim-2}       
\end{figure}

Based on these observations, we then show the simulation results of kernel interpolation in terms of quantifying the relation among AE, RMSE and  the number of dimension. 
 The numerical results can be found in Figure \ref{fig:sim-3}. 

\begin{figure}
	\centering
	\subfigure[]{\includegraphics[scale=0.33]{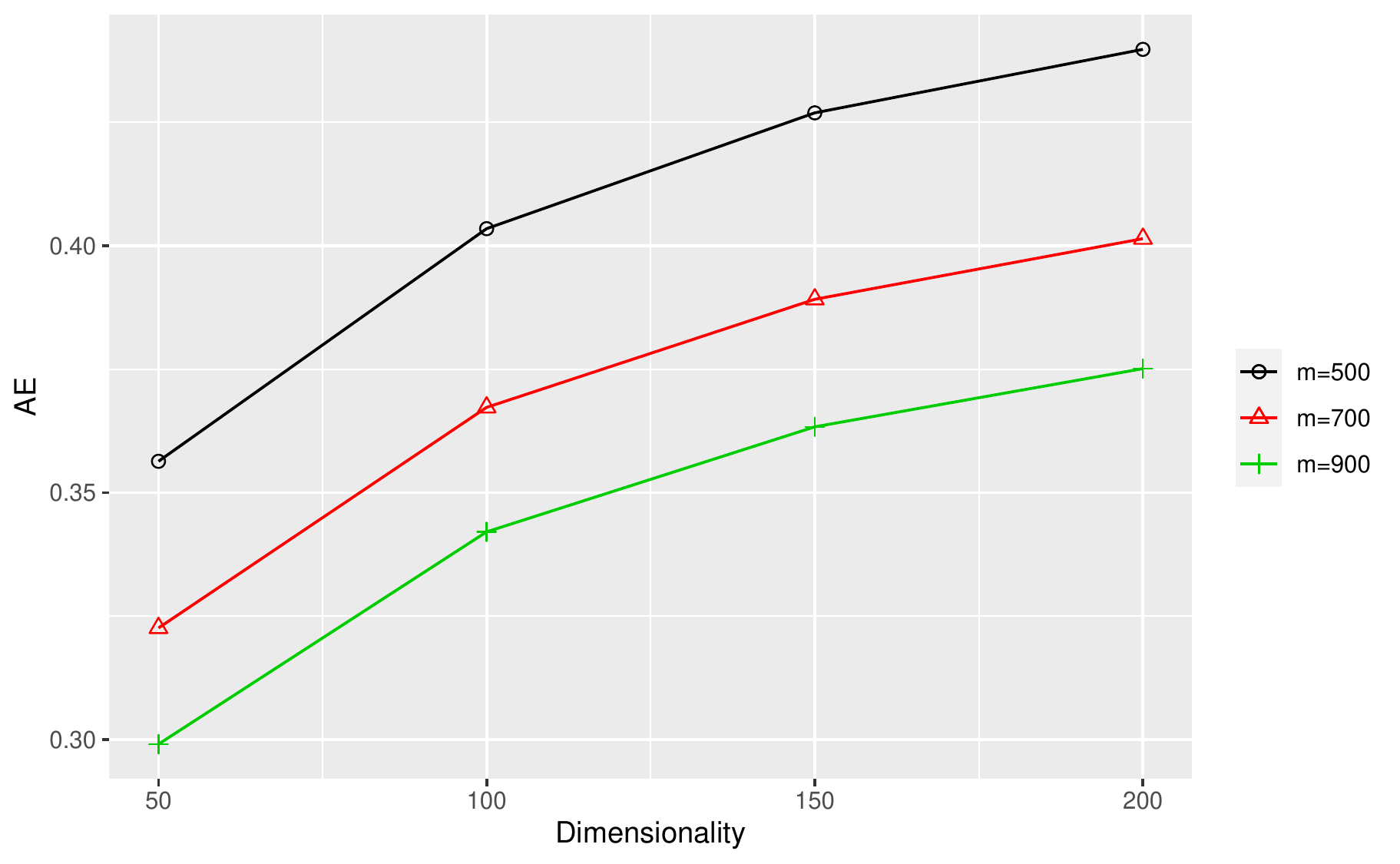}}
    \subfigure[]{\includegraphics[scale=0.33]{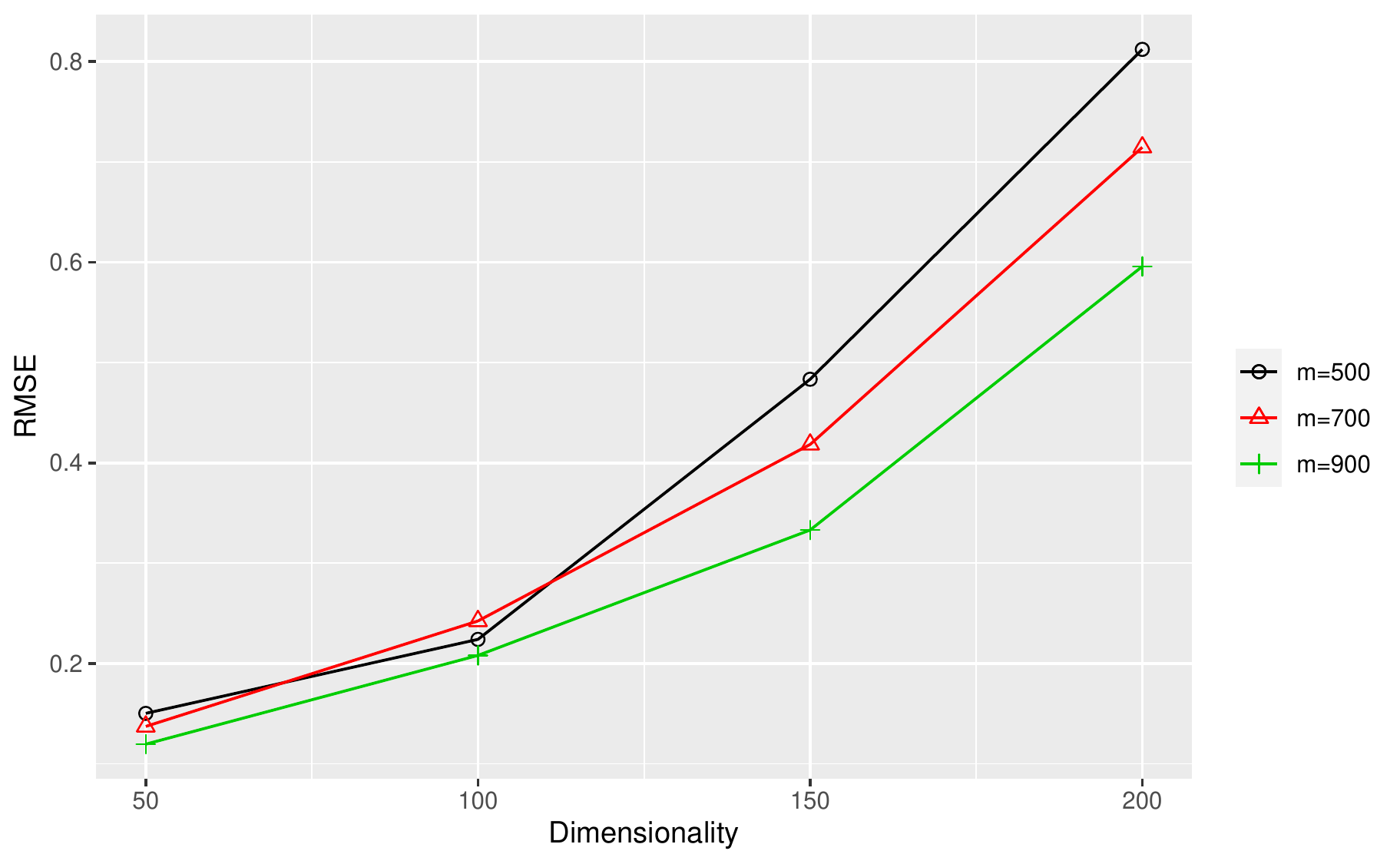}}
	\caption{Trends of AE  and RMSE  of  interpolation with respect to the dimension  }
	\label{fig:sim-3}
\end{figure} 

From Figure \ref{fig:sim-3}, we can also find three interesting phenomena. Firstly, Figure \ref{fig:sim-3} shows that both AE and RMSE decreases with respect to $d$, implying the difficulty for high dimensional interpolation. As shown in Corollary \ref{Corollary:noise-less-rate-G}, although the approximation rate does not heavily depend on $d$ (only a logarithmic relation), the approximation bound becomes worse when $d$ is larger. Secondly, As shown in Figure \ref{fig:sim-3} (b), RMSE increases  dramatically with respect to $d$, which   little bit contradicts with our theoretical assertions. We highlight that the main reason is that we employ a uniform kernel parameter $\gamma=0.05$ for all dimensions. Finally,   Figure \ref{fig:sim-3} also exhibits that AE is a feasible upper bound of RMSE, and thus verifies Theorem \ref{Theorem:noise-less-operator}.




{\bf Simulation III.}  In this simulation, we show the redundancy of the regularization term. For this purpose, we study both noise-free model (\ref{simulation-noiseless-model}) and the noisy model
\begin{equation}\label{simulation-noisy-model}
y_i = f^*(x_i) + \epsilon_i=\sum_{j=1}^d c_j\exp(-x_{i,j}^2) +\epsilon_i,
\end{equation}
where $\epsilon_1, \ldots, \epsilon_m$ are   sampled independently and identically   according to the uniform distribution on $[-0.2, 0.2]$.
We caution that the value of the tuning parameter $\gamma$ affects significantly the performance of $f_{D,\gamma}$. We experimented with several other ways, and eventually settled upon  the so-called ``hold-out method" \cite{Yao2007} in selecting a suitable value for $\gamma$. Roughly speaking,  the hold-out method divides the data set $D$ into training and validation set $D_{tr}$ and $D_{vl}$ respectively, where $D=D_{tr}\bigcup D_{vl}, D_{tr}\bigcap D_{vl}=\emptyset$ and  $D_{tr}$ contains half of the whole sample data. It then evaluates the performance of $f_{D,\gamma}$ for different values of $\gamma$ via the root mean square error (RMSE) on $D_{vl}$, and select the best value $\gamma^*$ by the following rule:
$$\gamma^*=\arg\min\left\{\sqrt{\frac{1}{2m}\sum_{\{x_i,y_i\}\in D_{vl}}(y_i - f_{D,\gamma}(x_i))^2}\right\}.$$
We then compute the RMSE of $f_{D,\gamma^*}$ against a randomly generated  testing data set $D_{test}=\{(x'_i,f^*(x'_i))\}_{i=1}^{500}$. To show the versatility  of our kernel interpolation method in high dimensional spaces, we generate some random  training samples $D_m:=\{(x_i,y_i)\}_{i=1}^m$ for $m=500,700, \dots, 1500,$ and testing samples $D_{test}=\{(x'_i,f^*(x'_i))\}_{i=1}^{500}$. We use a quasi-interpolation method (with a regularization parameter) to construct estimators of $f^*$ as follows. 
\begin{equation}\label{quasi-regularization}
f_{D,\gamma,\lambda}=\sum_{i=1}^m\left((\mathbb{G}_\gamma+\lambda I_m)^{-1} y_D\right)_i K_\gamma(x_i,\cdot),
\end{equation}
in which values of the regularization parameter $\lambda$ are respectively set to be 
\[
0, \quad 0.01,\quad 0.02,\quad 0.04,\quad 0.08\quad, 0.16,\quad 0.32,\quad 0.64,\quad 1.28,\quad 2.56.
\]
(When $\lambda=0$,  $f_{D,\gamma,0}$ is the  estimator kernel interpolation). We run 20 independent trials for each individual case. Average values of RMSE for different regularization parameters are shown in Part (b) of Figure \ref{fig:sim-4}. Some observations are in order.



\begin{figure}
	\centering
	\subfigure[]{\includegraphics[scale=0.33]{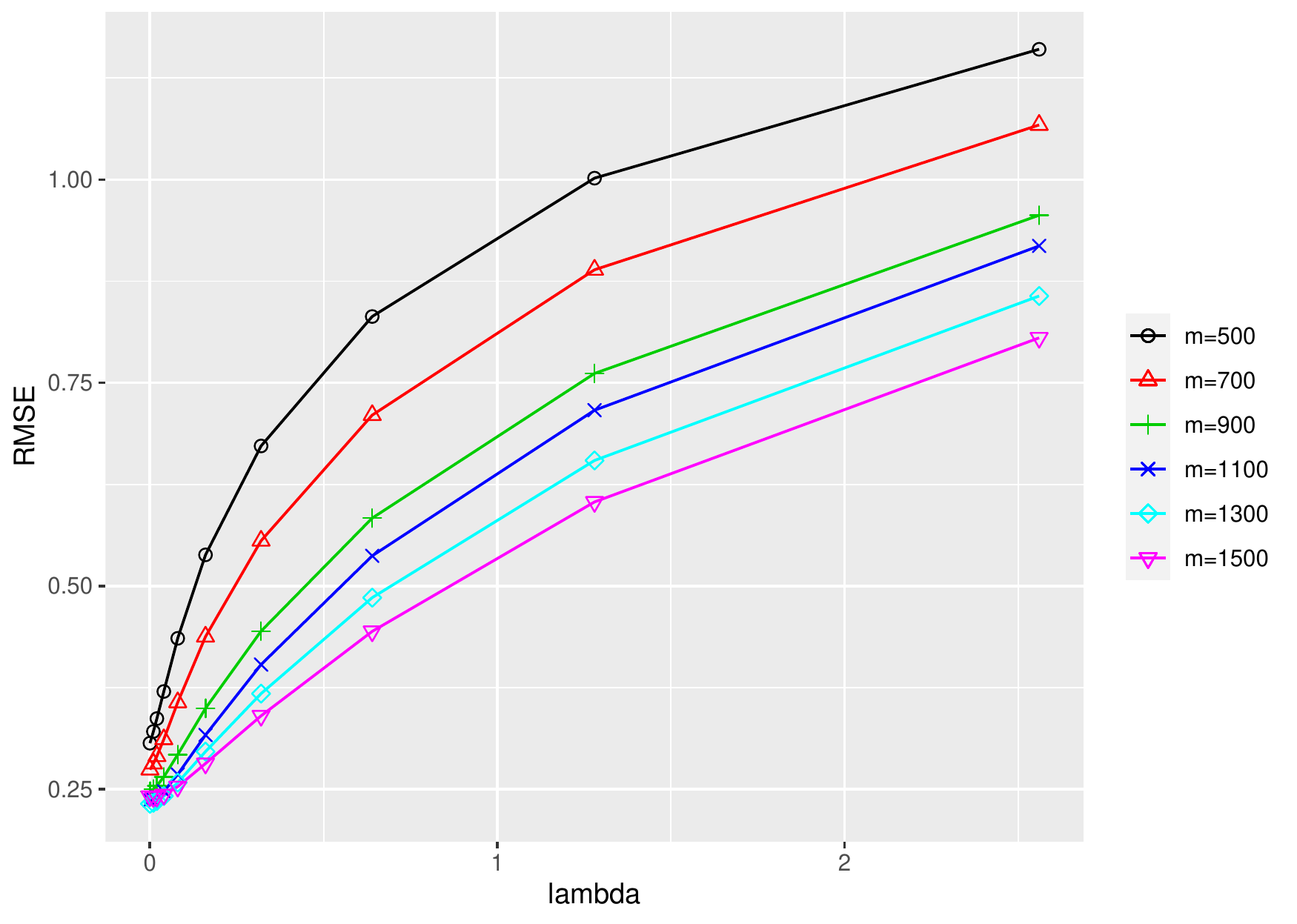}}
	\subfigure[]{\includegraphics[scale=0.33]{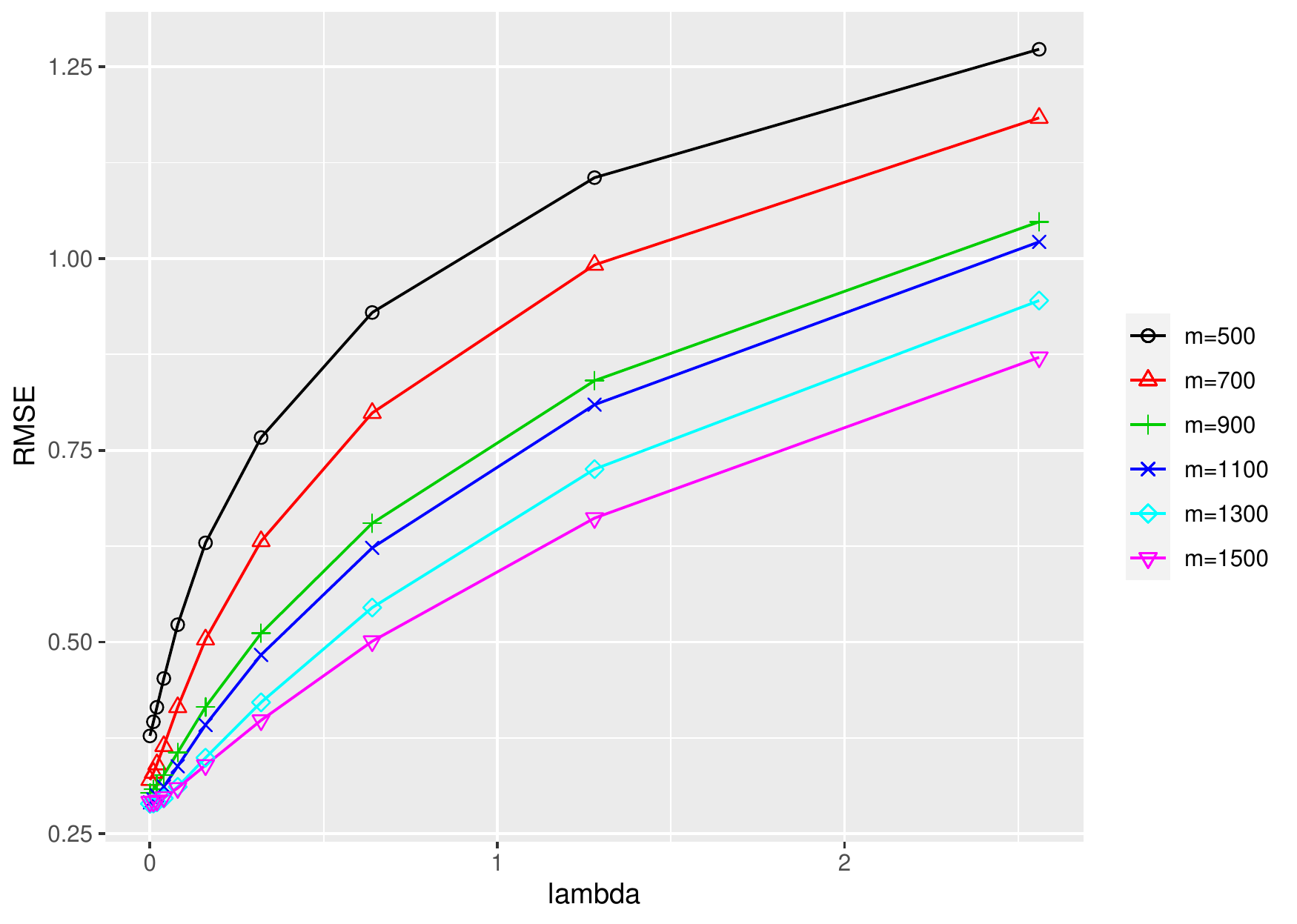}}
	\caption{RMSE for the noise-free and noise interpolation simulations}
	\label{fig:sim-4}
\end{figure}

Figure \ref{fig:sim-4} illustrates that the minimal RMSE of quasi-interpolants of various training sets and different regularization parameter ($\lambda$) values appears to reach at $\lambda=0$. This offers strong numerical evidences to that the kernel-interpolation estimator $f_{D,\gamma^*}=f_{D,\gamma^*,0}$ has the minimal RMSE on testing sets and shows the redundancy of the regularization term for high dimensional data interpolation.

\subsection{Real world data experiments }
In this subsection, we pursue the excellent performance for kernel interpolation on a high-dimensional real data set  MNIST, which is a    hand-writing recognition application  and is regarded as a benchmark of high dimensional data application. The basic experimental setting of this paper is the same as that in  \cite{Liang2020}.   As shown in Figure \ref{fig:sim-5}, the input of the data is a    hand-writing digits in $\{0,\dots,10\}$ and the prediction is also from  $\{0,\dots,10\}$.  Our  experiment considers the following problem: for each pair of distinct digits  $\{(i, j)\}$, $i, j \in\{0,1, \ldots, 9\}\}$,  label one digit as $1$ and the other as $0$, then fit the kernel quasi-interpolation  with Gaussian kernel $G_\gamma(x, x')=\exp\left(-\frac{\gamma\|x - x'\|^2}{d}\right)$ and $\gamma=2d=2\times784$.
\begin{figure}
	\centering
   \includegraphics[scale=0.33]{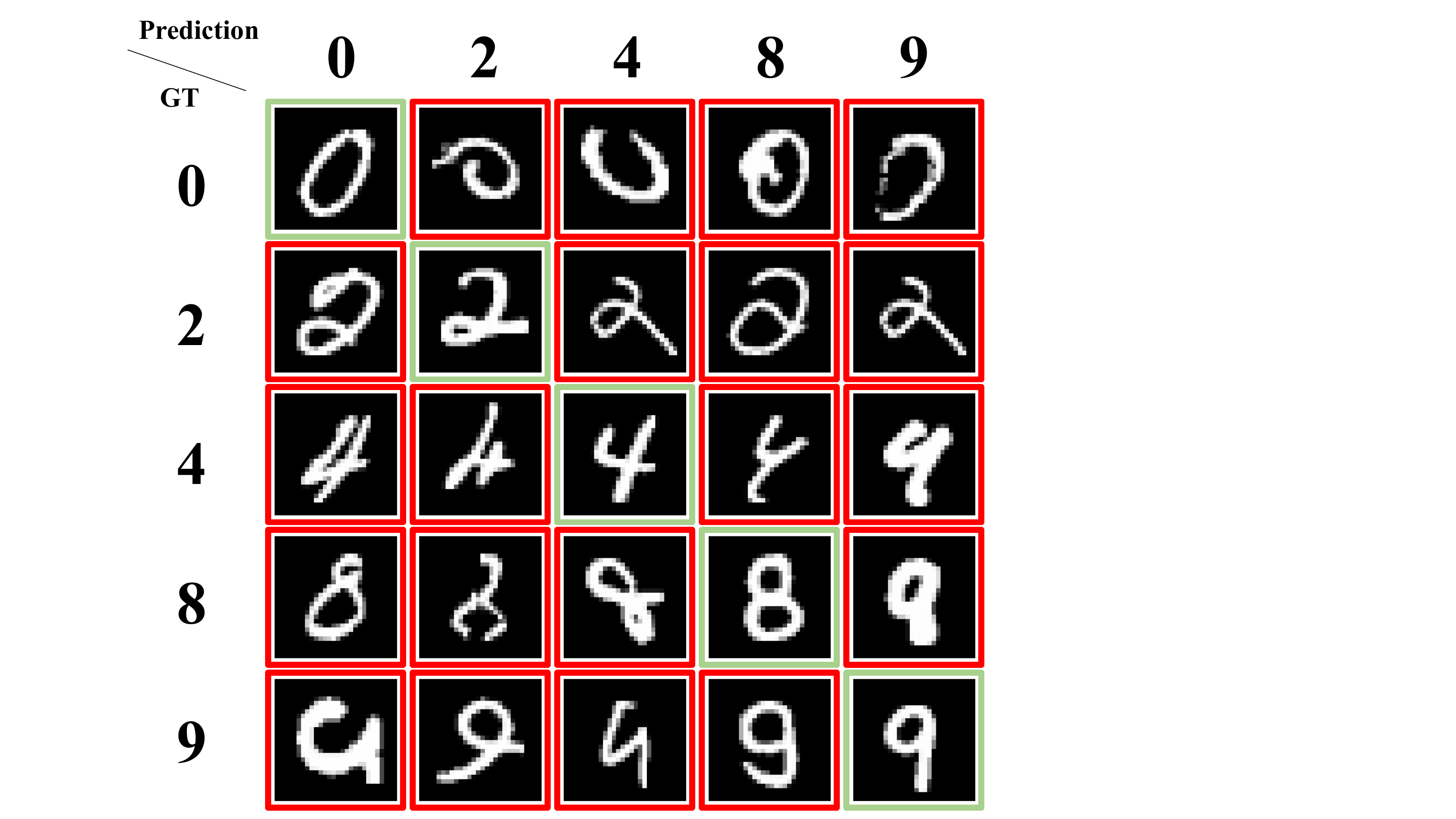}
	\caption{Task of MNIST data set}
	\label{fig:sim-5}
\end{figure}

For each of the $C(10, 2) = 45$ pairs of experiments, we chose  a finer grid of regularization $\lambda\in [0,20], $ where $\lambda $ is the regularization parameter.  We evaluate the performance on the out-of-sample test dataset, with the relative mean square error   $\frac{\sum_{i}\left(\widehat{f}\left(x_{i}\right)-y_{i}\right)^{2}}{\sum_{i}\left(\bar{y}-y_{i}\right)^{2}}$ to measure the accuracy of interpolation.
For experiment, both the training size and test size are $10,000$. The numerical results can be found in Figure \ref{fig:sim-6}.

\begin{figure}
	\centering
	\subfigure[]{\includegraphics[scale=0.33]{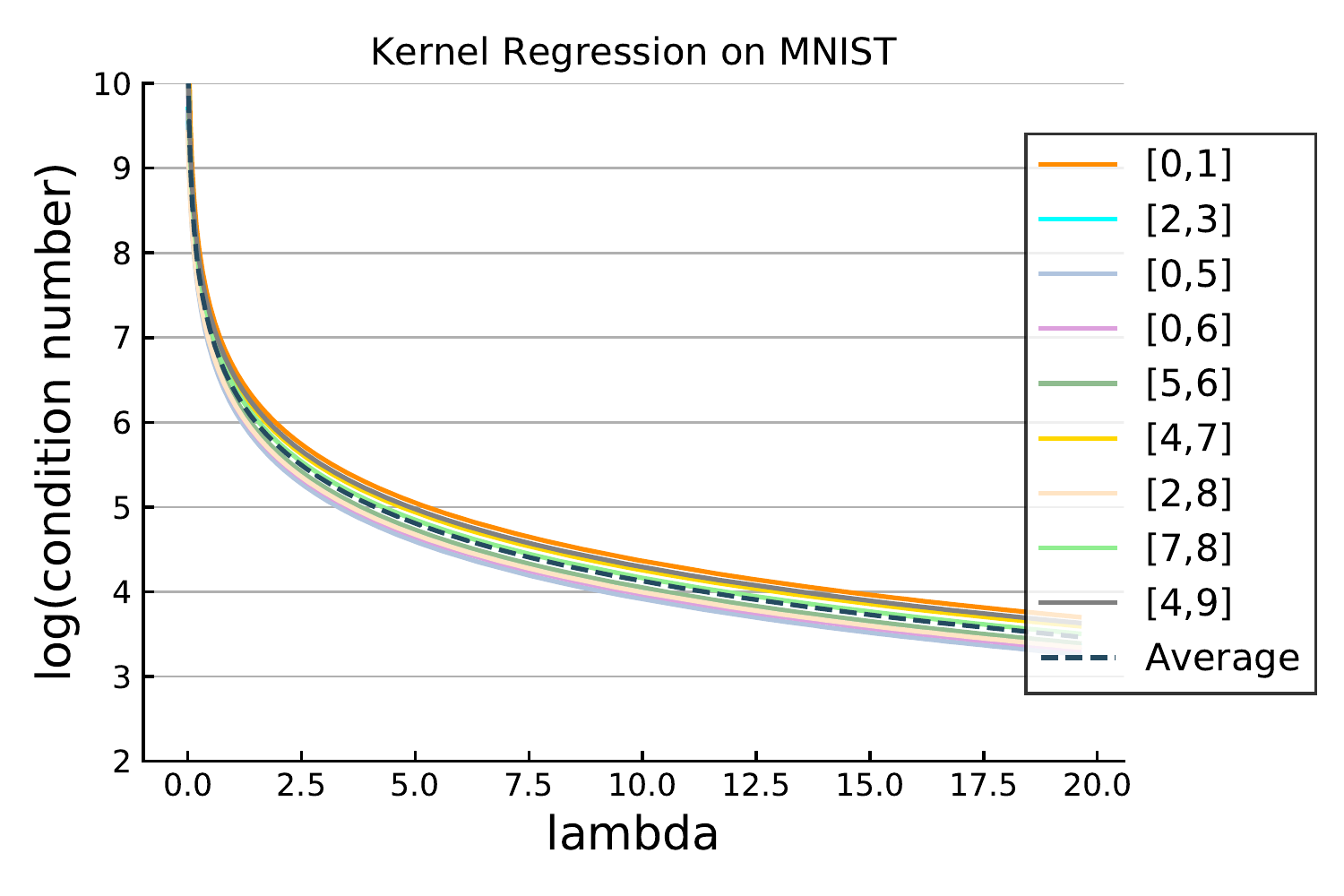}}
	\subfigure[]{\includegraphics[scale=0.33]{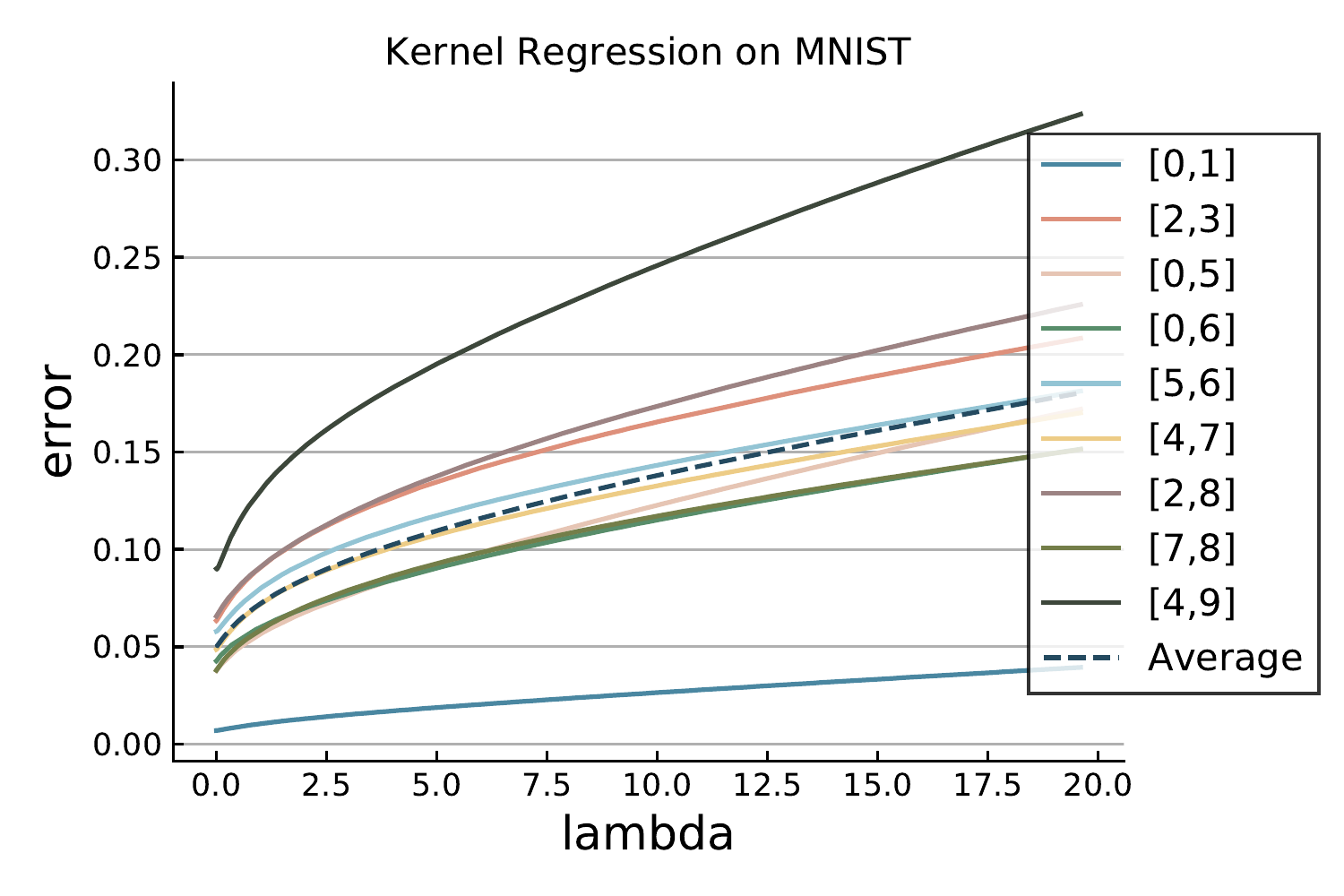}}
	\caption{Kernel interpolation on MNIST}
	\label{fig:sim-6}
\end{figure}

Figure \ref{fig:sim-6} (a) demonstrates the relation between the condition number of the quasi-interpolation matrix generated by (\ref{quasi-regularization}) and the regularization $\lambda$. It is obvious that the condition number decreases with respect to $\lambda$, showing that adding the regularization term  stabilizes the interpolation. However, as shown in  Figure \ref{fig:sim-6} (b),   among all $45$ experiments, kernel interpolation without  regularization ($\lambda = 0$) remarkably performs the best. Furthermore, the above numerical results shows that the error increases with respect to $\lambda$, which implies that the regularization term plays a negative effect in kernel interpolation, neglecting its positive effect in enhancing the stability. The main reason is that for high dimensional data interpolation, the condition number, as shown in Figure \ref{fig:sim-6} (a) is already very small. In this way, adding the regularization term plays an incremental role in guarantee the well-conditioness of the matrix. However, the additional regularization term inevitably degrades the approximation performance of kernel interpolation. All these show the effectiveness of kernel interpolation and the redundancy of the regularization term in tackling high dimensional data.

\section{Proofs}\label{Sec.proof}
In this section, we present the proofs of our main results. 

\subsection{Proofs of results stated in Section \ref{Sec.operator.difference}}
The proof of Propositions \ref{Proposition:null-space} is somewhat standard. We present the proof for the sake of completeness. 
\begin{proof}[Proof of Proposition \ref{Proposition:null-space}]
  Let
$
S_D=U\Sigma V^T
$
be the SVD of $S_D$, where
$\Sigma$ is a diagonal  matrix, $U:\mathbb R^m\rightarrow\mathbb R^m$, $V:\mathbb R^m\rightarrow\mathcal H_K$ satisfying $U^TU=V^TV=I$.
Then, we have
$$
L_{K,D}= S_D^TS_D=V\Sigma^2V^T,\qquad   \frac1m\mathbb K=S_DS_D^T=U\Sigma^2U^T,
$$
and
\begin{equation}\label{Projection}
P_m:=S_D^T(S_DS_D^T)^{-1}S_D=VV^T.
\end{equation}
 Equation  (\ref{Projection}) implies 
that $P_m$ is the projection operator from $\mathcal H_K$ to
$
\mathcal H_{K,m}=\mbox{span}\{\phi_1^{D},\dots,\phi_m^{D}\},
$
which implies
\begin{equation}\label{factor}
P_m^s=P_m  \qquad\mbox{and} \ (I-P_m)^s=(I-P_m), \qquad \ \forall \ s \in \Z_+.
\end{equation}
It follows from  (\ref{model-1}) and (\ref{operator-rep1}) that
\begin{equation}\label{operator-rep}
f_D=S_D^T(S_DS_D^T)^{-1}y_D= S_D^T(S_DS_D^T)^{-1}S_Df^*=P_mf^*.
\end{equation}
By writing
$$
L_{K,D}f_D=S_D^TS_DS_D^T(S_DS_D^T)^{-1}S_Df^*=L_{K,D}f^*,
$$
This implies (\ref{Null-space}) and completes the proof of Proposition \ref{Proposition:null-space}.
\end{proof}

To prove Theorem \ref{Theorem:noise-less-operator}, we need the following bounds related to the projection operator, whose proof is also given for the sake of completeness.

\begin{proposition}\label{Proposition:operator-bound-Pm}
	Let $\nu,\lambda\geq 0$ and $j\in\mathbb N$. We have
	\begin{equation}\label{norm-1}
	\|L_K^{v}(I-P_m)\|\leq \lambda^{1/2}\|L_K^{v}(L_{K,D}+\lambda I)^{-1/2}\|,
	\end{equation}
	and
	\begin{equation}\label{proj-operator-norm-1}
	\|S_D^T(S_DS_D^T)^{-j}S_D\|\leq  (\sigma^D_{m})^{-j+1}.
	\end{equation}
\end{proposition}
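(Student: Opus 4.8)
The plan is to derive both bounds from the singular value decomposition $S_D = U\Sigma V^T$ already recorded in the proof of Proposition \ref{Proposition:null-space}, together with the spectral calculus for the self-adjoint operator $L_{K,D}$. Recall from there that $L_{K,D} = V\Sigma^2 V^T$ and $P_m = VV^T$, with $V^TV = I$ and $U^TU = I$, so that $P_m$ is the orthogonal projection of $\mathcal H_K$ onto $\mathcal H_{K,m} = \mathrm{span}\{\phi_1^D,\dots,\phi_m^D\}$, which is exactly the range of $L_{K,D}$. Consequently $I - P_m$ is the orthogonal projection onto $\ker L_{K,D}$ and $\|I-P_m\|\le 1$.

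For \eqref{norm-1} (with $\lambda>0$, the regime in which the right-hand side is defined), the key observation is that $(L_{K,D}+\lambda I)^{-1/2}$ acts as the scalar $\lambda^{-1/2}$ on $\ker L_{K,D}$. Indeed, for any $y\in\mathcal H_K$ the vector $(I-P_m)y$ lies in $\ker L_{K,D}$, so $(L_{K,D}+\lambda I)(I-P_m)y = \lambda(I-P_m)y$, whence $(L_{K,D}+\lambda I)^{-1/2}(I-P_m) = \lambda^{-1/2}(I-P_m)$. Rearranging gives the operator identity
\begin{equation*}
I-P_m = \lambda^{1/2}(L_{K,D}+\lambda I)^{-1/2}(I-P_m).
\end{equation*}
Applying $L_K^{v}$ on the left, taking spectral norms, and using submultiplicativity together with $\|I-P_m\|\le 1$ yields
\begin{equation*}
\|L_K^{v}(I-P_m)\| \le \lambda^{1/2}\|L_K^{v}(L_{K,D}+\lambda I)^{-1/2}\|\,\|I-P_m\| \le \lambda^{1/2}\|L_K^{v}(L_{K,D}+\lambda I)^{-1/2}\|,
\end{equation*}
which is \eqref{norm-1}.

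For \eqref{proj-operator-norm-1} I would substitute the SVD directly. Since $S_D^T = V\Sigma U^T$ and $S_DS_D^T = U\Sigma^2 U^T$ (using $V^TV=I$), we get $(S_DS_D^T)^{-j} = U\Sigma^{-2j}U^T$, and hence, after cancelling the orthogonal factors via $U^TU = I$,
\begin{equation*}
S_D^T(S_DS_D^T)^{-j}S_D = V\Sigma U^T\, U\Sigma^{-2j}U^T\, U\Sigma V^T = V\Sigma^{2-2j}V^T.
\end{equation*}
Because $V$ is an isometry with $V^T$ surjective, the spectral norm of $V\Sigma^{2-2j}V^T$ equals that of $\Sigma^{2-2j}$. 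The singular values of $S_D$ are $\sqrt{\sigma_\ell^D}$, so the diagonal entries of $\Sigma^{2-2j}$ are $(\sigma_\ell^D)^{1-j}$; as $1-j\le 0$ for $j\ge 1$, the maximum over $\ell$ is attained at the smallest eigenvalue $\sigma_m^D$, giving $\|\Sigma^{2-2j}\| = (\sigma_m^D)^{1-j} = (\sigma_m^D)^{-j+1}$. This establishes \eqref{proj-operator-norm-1}, in fact as an equality.

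I do not anticipate a serious obstacle; the content is essentially the functional calculus for $L_{K,D}$ and bookkeeping with the operator SVD. The only points requiring mild care are the justification that taking a function of $L_{K,D}$ respects the splitting $\mathcal H_K = \mathcal H_{K,m}\oplus\ker L_{K,D}$—equivalently that $(L_{K,D}+\lambda I)^{-1/2}$ commutes with $P_m$, which holds because $L_{K,D}$ and $P_m$ are simultaneously diagonalized in the system $\{\phi_\ell^D\}$ completed by an orthonormal basis of $\ker L_{K,D}$—and the analogous care in treating $S_D$ as an operator between $\mathcal H_K$ and $\mathbb R^m$ rather than a finite matrix when reading off singular values.
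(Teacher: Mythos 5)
Your proof is correct, but it takes a genuinely different route from the paper's on both inequalities. For \eqref{norm-1}, the paper never uses your identity $(L_{K,D}+\lambda I)^{-1/2}(I-P_m)=\lambda^{-1/2}(I-P_m)$; instead it proves that $P_m-(L_{K,D}+\lambda I)^{-1}L_{K,D}$ is a positive operator (via the representation $(L_{K,D}+\lambda I)^{-1}L_{K,D}=P_mS_D^T(S_DS_D^T+\lambda I)^{-1}S_DP_m$ and $\|(L_{K,D}+\lambda I)^{-1}L_{K,D}\|\leq 1$), deduces that $\lambda(L_{K,D}+\lambda I)^{-1}-(I-P_m)$ is positive, sandwiches this operator between $L_K^{v}$ on both sides, and compares quadratic forms. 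Your key observation---that $I-P_m$ is exactly the orthogonal projection onto $\ker L_{K,D}$, because $\operatorname{ran}P_m=\operatorname{ran}S_D^T=\operatorname{ran}L_{K,D}$ and $L_{K,D}$ is self-adjoint---replaces that inequality chain by the exact operator identity $I-P_m=\lambda^{1/2}(L_{K,D}+\lambda I)^{-1/2}(I-P_m)$, which is shorter and makes the origin of the factor $\lambda^{1/2}$ transparent. For \eqref{proj-operator-norm-1} the divergence is larger: the paper does not invoke the SVD at this point at all, but uses the self-adjointness trick $(S_D^T(S_DS_D^T)^{-j}S_D)^k=S_D^T(S_DS_D^T)^{-jk+k-1}S_D$ together with the bounds $\|S_D^T\mathbf{c}\|_K\leq \kappa m^{-1/2}\|\mathbf{c}\|_{\ell^2}$ and $\|f\|_\infty\leq\kappa\|f\|_K$ to get $\|S_D^T(S_DS_D^T)^{-j}S_D\|\leq\kappa^{2/k}(\sigma_m^D)^{-j+1-1/k}$, and then lets $k\to\infty$. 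Your direct computation $S_D^T(S_DS_D^T)^{-j}S_D=V\Sigma^{2-2j}V^T$ avoids both the limiting argument and the constant $\kappa$, and shows the bound is in fact an equality, which is a (mild) sharpening. What the paper's arguments buy in exchange: the positivity route for \eqref{norm-1} does not require identifying $\operatorname{ran}(I-P_m)$ with $\ker L_{K,D}$, and the power trick for \eqref{proj-operator-norm-1} applies verbatim to any self-adjoint operator of this sandwiched form without a simultaneous diagonalization; for the operators at hand, though, your spectral argument is the more economical one. Two small points to make explicit: \eqref{norm-1} is meaningful only for $\lambda>0$ (as you note), and $\Sigma$ is invertible because strict positive definiteness of $K$ makes $\mathbb K$, hence $S_DS_D^T$, nonsingular.
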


\begin{proof}
	For arbitrary $\lambda>0$, since
	\begin{eqnarray}\label{regularized}
	&&(L_{K,D}+\lambda I)^{-1}S_D^T
	= (S_D^TS_D+\lambda I)^{-1}S_D^T(S_DS_D^T+\lambda I) (S_DS_D^T+\lambda I)^{-1}\\
	&=&
	(S_D^TS_D+\lambda I)^{-1}(S_D^TS_DS_D^T+\lambda S_D^T)(S_DS_D^T+\lambda I)^{-1}
	=
	S_D^T(S_DS_D^T+\lambda I)^{-1},  \nonumber
	\end{eqnarray}
	we then have
	\begin{eqnarray}\label{regularization-1}
	(L_{K,D}+\lambda I)^{-1}L_{K,D}
	&=& S_D^T(S_DS_D^T+\lambda I)^{-1}S_D \\
	&= & P_mS_D^T(S_DS_D^T+\lambda I)^{-1}S_DP_m.\nonumber
	\end{eqnarray}
	It follows from  (\ref{factor}), (\ref{regularization-1}) and the inequality 
	$\|(L_{K,D}+\lambda I)^{-1}L_{K,D}\|\leq 1$ that
	for any $f\in\mathcal H_K$, there holds
	\begin{eqnarray*}
		&&\langle S_D^T(S_DS_D^T+\lambda I)^{-1}S_Df,f\rangle_K
		=\langle P_m(L_{K,D}+\lambda I)^{-1}L_{K,D}P_mf,f\rangle_K\\
		&=&
		\|(L_{K,D}+\lambda I)^{-1/2}L_{K,D}^{1/2}P_mf\|_K^2
		\leq
		\|(L_{K,D}+\lambda I)^{-1/2}L_{K,D}^{1/2}\|^2 \|P_mf\|_K^2
		\leq
		\|P_mf\|_K^2\\
		&=&\langle P_mf,f\rangle_K.
	\end{eqnarray*}
	This shows that $P_m-(L_{K,D}+\lambda I)^{-1}L_{K,D}$ is a positive operator.  We then obtain from
	$\lambda(L_{K,D}+\lambda I)^{-1}=I-(L_{K,D}+\lambda I)^{-1}L_{K,D}$ that
	$ \lambda(L_{K,D}+\lambda I)^{-1}-(I-P_m)$ is  positive, which in turns implies that
	$L_K^v (\lambda(L_{K,D}+\lambda I)^{-1}-(I-P_m)) L_K^v$ is  positive. Recalling (\ref{factor}), we have
	\begin{eqnarray*}
		&&
		\lambda\|(L_{K,D}+\lambda I)^{-1/2}L_K^{v}f\|_K^2
		=\langle\lambda(L_{K,D}+\lambda I)^{-1} L_K^vf, L_K^vf \rangle_K\\
		&=&
		\langle L_K^v\lambda(L_{K,D}+\lambda I)^{-1} L_K^vf, f \rangle_K
		\geq   \langle  L_K^v(I-P_m)L_K^vf,f\rangle_K
		=\langle  L_K^{v}(I-P_m)^2L_K^{v}f, f\rangle_K\\
		&=&\langle  (I-P_m)L_K^vf, (I-P_m)L_K^vf\rangle_K
		=
		\|L_K^{v}(I-P_m)f\|_K
	\end{eqnarray*}
	holds true for any $ f\in\mathcal H_K$ and any $ v\geq0$,
	which proves (\ref{norm-1}).  Since $S_D^T(S_DS_D^T)^{-j}S_D$ is self-adjoint, direct computation yields
	$$
	(S_D^T(S_DS_D^T)^{-j}S_D)^k=
	S_D^T (S_DS_D^T)^{-jk+k-1}S_D,\qquad \forall \ k\in\mathbb N.
	$$
	This implies that
	\begin{equation}\label{iteration.1}
	\|S_D^T(S_DS_D^T)^{-j}S_D\|^k=\|(S_D^T(S_DS_D^T)^{-j}S_D)^k\|
	=\|S_D^T (S_DS_D^T)^{-jk+k-1}S_D\|.
	\end{equation}
	Note that for any ${\bf c}=(c_1,\dots,c_m)^T\in\mathbb R^m$, there holds
	$$
	\|S_D^T{\bf c}\|_K^2=\frac1{m^2}\sum_{i=1}^m\sum_{j=1}^mc_ic_j\langle K_{x_i},K_{x_j}\rangle_K
	\leq \kappa^2 \left(\frac1{m}\sum_{i=1}^m |c_i|\right)^2
	\leq
	\frac{\kappa^2}{m}\sum_{i=1}^m |c_i|^2=\frac{\kappa^2}{m}\|{\bf c}\|_{\ell^2}^2.
	$$
  The compactness of $\mathcal X$ implies $\kappa<\infty$.
	This allows us to use the fact that $\|f\|_\infty\leq \kappa\|f\|_K$ for $f\in\mathcal H_K$ to deduce
	\begin{eqnarray*}
		&&\|S_D^T (S_DS_D^T)^{-jk+k-1}S_Df\|^2_K
		\leq
		\frac{\kappa^2}{m}\|(S_DS_D^T)^{-jk+k-1}S_Df\|_{\ell^2}^2\\
		&\leq&
		\frac{\kappa^2}{m}\left( \sigma^D_m\right)^{-2jk+2k-2}\|S_Df\|_{\ell^2}^2
		\leq \kappa^2\left( \sigma^D_m\right)^{-2jk+2k-2}\frac1m\sum_{i=1}^m|f(x_i)|^2\\
		&\leq&
		\kappa^2\left( \sigma^D_m\right)^{-2jk+2k-2} \|f\|_\infty^2
		\leq
		\kappa^4\left( \sigma^D_m\right)^{-2jk+2k-2} \|f\|_K^2.
	\end{eqnarray*}
	Hence,
	$$
	\|S_D^T(S_DS_D^T)^{-j}S_D\| \leq
	{\kappa^{2/k}}\left(\sigma^D_{m}\right)^{-j+1-1/k}.
	$$
	Note that the above estimate holds true all $k \in \N$. Letting $k\rightarrow\infty$, we   derive
	$$
	\|S_D^T(S_DS_D^T)^{-j}S_D\|\leq (\sigma^D_{m})^{-j+1}.
	$$
	This completes the proof of Proposition \ref{Proposition:operator-bound-Pm}.  
\end{proof}

With the help of the above proposition, we can prove Theorem \ref{Theorem:noise-less-operator} as follows.

\begin{proof}[Proof of Theorem \ref{Theorem:noise-less-operator}] Assuming (\ref{regularity-assump}) with $r\geq 1/2$, we use the facts that  $\|Af\|\leq \|A\|\|f\|_K$,   $\|AB\|=\|BA\|\leq\|A\|\|B\|$ for positive operators $A,B$ and $f\in\mathcal H_K$ to derive
	\begin{equation}\label{app.1}
	\|L_K^u(f_D-f^*)\|_\rho
	= \|L_K^{u}(I-P_m)L_K^{r-1/2}\mathcal L_K^{1/2}h^*\|_K\leq\|L_K^{u}(I-P_m)L_K^{r-1/2}\|
	\|h^*\|_\rho.
	\end{equation}
	In the rest of the proof, we need to work on two cases: $r>1$ and $\frac12\leq r\leq1$.
	
	\noindent (i) {\it Case: $\frac12\leq r\leq1$.}
	We first use (\ref{factor}) to derive that $(I-P_m)^{2r}=(I-P_m)$. We then use (\ref{norm-1}) with $v=1/2$ and the well-known Codes inequality (\ref{Codes inequality}) (in Appendix A) with $\tau=2u$ and $\tau=2r-1$ respectively  to get
	\begin{eqnarray*}
		&&
		\|L_K^{u}(I-P_m)L_K^{r-1/2}\|
		=
		\|L_K^{u}(I-P_m)^{2r-1+2u}L_K^{r-1/2}\|\\
		&\leq&\|L_K^{u}(I-P_m)^{2u}\|\|(I-P_m)^{2r-1}L_K^{r-1/2}\|
		\leq
		\|L_K^{1/2}(I-P_m)\|^{2u}\|(I-P_m)L_K^{1/2}\|^{2r-1}\\
		&=&
		\|(I-P_m)L_K^{1/2}\|^{2r+2u-1}
		\leq
		\lambda^{r+u-1/2} \|(L_{K,D}+\lambda I)^{-1/2}(L_K+\lambda I)^{1/2}\|^{2r+2u-1}.
	\end{eqnarray*}
	Plugging the above estimate into (\ref{app.1}) and noting (\ref{Def.Q}), we have
	$$
	\|L_K^u(f_D-f^*)\|_\rho\leq \lambda^{r+u-1/2}\mathcal Q_{D,\lambda}^{2r+2u-1}\|h^*\|_\rho.
	$$
	(ii){\it Case: $r>1$.}
	We first use the triangle inequality to get
	\begin{eqnarray*}
		\|L_K^{u}(I-P_m)L_K^{r-1/2}\|
		\leq
		\|L_K^{u}(I-P_m)(L_K^{r-1/2}-L_{K,D}^{r-1/2})\|
		+
		\|L_K^{u}(I-P_m)L_{K,D}^{r-1/2}\|.
	\end{eqnarray*}
	Since $r>1$, we have $r-1/2 > 1/2$. Thus
	(\ref{Null-space}) implies
	$$
	\|L_K^{u}(I-P_m)L_{K,D}^{r-1/2}\|\leq\|L_K^{u}\|\|L_{K,D}^{r-1/2}(I-P_m)\|
	=0.
	$$
	Hence, (\ref{norm-1}) with $v=1/2$ and (\ref{lip-operator}) in Appendix A yield
	\begin{eqnarray*}
		&&\|L_K^{u}(I-P_m)L_K^{r-1/2}\|
		\leq
		\|L_K^{u}(I-P_m)(L_K^{r-1/2}-L_{K,D}^{r-1/2})\| \\
		&\leq&
		\|L_K^{u}(I-P_m)\|\|L_K^{r-1/2}-L_{K,D}^{r-1/2}\|
		\leq \|L_K^{u}(I-P_m)^{2u}\|\|L_K^{r-1/2}-L_{K,D}^{r-1/2}\|_{HS}
		\\
		&\leq&
		(r-1/2)\kappa^{r-3/2}\lambda^u\|(L_{K,D}+\lambda I)^{-1/2}(L_K+\lambda I)^{1/2}\|
		\|L_K-L_{K,D}\|_{HS}.
	\end{eqnarray*}
	Inserting the above estimate into (\ref{app.1}) and noting (\ref{Def.Q}), (\ref{Def.R}), we obtain
	$$
	\|L_K^u(f_D-f^*)\|_K
	\leq (r-1/2)\kappa^{r-3/2}\|h^*\|_\rho \lambda^u\mathcal Q_{D,\lambda}\mathcal R_D.
	$$
	This completes the proof of Theorem \ref{Theorem:noise-less-operator}.
\end{proof}

The proof of  Theorem \ref{Theorem:Stability-via-operator} requires a novel integral operator approach as well as a second-order decomposition of difference of operators developed in \cite{Lin2017,Guo2017}. We proceed it as follows.

\begin{proof}[Proof of Theorem \ref{Theorem:Stability-via-operator}] Define $f_D^*=P_mf^*$. We then write,
	\begin{equation}\label{error-dec}
	\|L_K^{u}(f_D-f^*)\|_K\leq \|L_K^{u}(f_D-f_D^*)\|_K+\|L_K^{u}(f_D^*-f^*)\|_K.
	\end{equation}
	The term $\|L_K^{u}(f_D^*-f^*)\|_K$ has been dealt with in the proof of Theorem \ref{Theorem:noise-less-operator}, which allows us to concentrate on  bounding $\|L_K^{u}(f_D-f_D^*)\|_K$.
	The crux of our proof is to introduce the following second-order decomposition for  differences of operators. A prototype of this decomposition can be found in  \cite{Lin2017,Guo2017}.
	Let $A,B$ be invertible  operators. We first write
	\begin{equation}\label{first-order-dec}
	A^{-1}-B^{-1}=A^{-1}(B-A)B^{-1}=B^{-1}(B-A)A^{-1}.
	\end{equation}
	We then use (\ref{first-order-dec})  to write
	\begin{equation}\label{Second-order-dec}
	A^{-1}-B^{-1} = B^{-1}(B-A)B^{-1}+
	B^{-1}(B-A)A^{-1}(B-A)B^{-1}.
	\end{equation}
	Setting $A= S_D S_D^T $  and $B=S_DS_D^T+\mu I$ with $\mu>0$ in (\ref{Second-order-dec}), we obtain that
	\begin{eqnarray*}
		(S_D S_D^T)^{-1}
		&=& (S_DS_D^T+\mu I)^{-1}+ \mu(S_DS_D^T+\mu I)^{-2}\\
		&+&
		\mu^2(S_DS_D^T+\mu I)^{-1} (S_DS_D^T)^{-1}(S_DS_D^T+\mu I)^{-1}.
	\end{eqnarray*}
	Note that  (\ref{regularized}) implies
	\begin{equation}\label{exchange}
	S^T_D(S_DS_D^T+\mu I)^{-k}=(L_{K,D}+\mu I)^{-k}S_D^T,\qquad\forall\  k\in\mathbb N.
	\end{equation}
	Hence, we have
	\begin{eqnarray*}
		&&S_D^T(S_DS_D^T)^{-1} (y_D-S_Df^*)
		= S_D^T(S_DS_D^T+\mu I)^{-1}(y_D-S_Df^*)\\
		&+&
		\mu  S_D^T(S_DS_D^T+\mu I)^{-2} (y_D-S_Df^*)\\
		& +&
		\mu^2S_D^T(S_DS_D^T+\mu I)^{-1} (S_DS_D^T)^{-1}(S_DS_D^T+\mu I)^{-1}(y_D-S_Df^*) \\
		&=&
		(L_{K,D}+\mu I)^{-1}(S_D^Ty_D-L_{K,D}f^*)+
		\mu(L_{K,D}+\mu I)^{-2}(S_D^Ty_D-L_{K,D}f^*)\\
		&+&
		\mu^2 (L_{K,D}+\mu I)^{-1} S_D^T(S_DS_D^T)^{-2}S_D(L_{K,D}+\mu I)^{-1}(S_D^Ty_D-L_{K,D}f^*).
	\end{eqnarray*}
	Therefore, it follows from (\ref{Def.P}) and (\ref{Def.Q})  that
	\begin{eqnarray*}
		&&\|L_K^{u}(f_D- f^*_D)\|_K=\|L_{K}^{u}S_D^T(S_DS_D^T)^{-1}(y_D-S_Df^*)\|_K \nonumber\\
		&\leq&
		\|L_K^{u}(L_{K,D}+\mu I)^{-1}(S_D^Ty_D-L_{K,D}f^*)\|_K
		+\mu\|L_K^{u}(L_{K,D}+\mu I)^{-2}(S_D^Ty_D-L_{K,D}f^*)\|_K \nonumber\\
		&+&
		\mu^2\|L_K^{u}(L_{K,D}+\mu I)^{-1} S_D^T(S_DS_D^T)^{-2}S_D(L_{K,D}+\mu I)^{-1}(S_D^Ty_D-L_{K,D}f^*)\|_K \nonumber\\
		&\leq&
		2\mu^{u-1/2}\mathcal Q^2_{D,\mu} \mathcal P_{D,\mu}+\mu^{u+1/2} \mathcal Q^{2u+1}_{D,\mu}\mathcal P_{D,\mu}\|S_D^T(S_DS_D^T)^{-2}S_D\|.
	\end{eqnarray*}
	Applying inequality (\ref{proj-operator-norm-1}) with $j=2$, we have
$$
	\|L_K^u(f_D- f^*_D)\|_K
	\leq
	(2\mu^{u-1/2}+ {\mu^{u+1/2}} (\sigma^D_{m})^{-1}\mathcal Q^{2u-1}_{D,\mu}) \mathcal Q^{2}_{D,\mu} \mathcal P_{D,\mu},\qquad\forall\ \mu>0.
$$
	Inserting the above estimate into (\ref{error-dec}) and noting (\ref{noise-less-operator}), we derive (\ref{noise-operator}) directly. This completes the proof of Theorem \ref{Theorem:Stability-via-operator}.  
\end{proof}

The main difficulty in the proof of Theorem                             \ref{Theorem:native-barrier-without-noise} is to find a good
projection of $f^*$ onto $\mathcal H_K$ and quantify the error.
In our proof, we construct  a $f_\lambda\in\mathcal H_K$  and treat
$f^*-f_\lambda$ as the noise. In such a way, the basic idea is   similar as that in the proof of Theorem \ref{Theorem:Stability-via-operator} by involving $\sigma_m^D$ to reflect the stability of kernel interpolation.

\begin{proof}[Proof of Theorem \ref{Theorem:native-barrier-without-noise}]
	For an arbitrary $\lambda>0$, define
	$$
	f_\lambda=( L_K+\lambda I)^{-1}\mathcal L_Kf^*,  \qquad
	f_{D,\lambda}= ( L_{K,D}+\lambda I)^{-1}\mathcal L_{K,D}f^*.
	$$
	Assuming that (\ref{regularity-assump}) holds true with $0\leq r\leq 1$,  Smale and Zhou \cite{Smale2005} (or \cite{Smale2007}) proved that
	\begin{equation}\label{approximation-error-small1}
	\|f^*-f_\lambda\|_\rho\leq \lambda^r\|h^*\|_\rho.
	\end{equation}
	Thus, we have
	\begin{equation}\label{error-decomposition-out}
	\|f^*-f_D\|_\rho\leq \lambda^r\|h^*\|_\rho
	+\|f_\lambda-f_{D,\lambda}\|_\rho+\|f_{D,\lambda}-f_D\|_\rho.
	\end{equation}
	By writing
	\begin{eqnarray*}
		&&f_\lambda-f_{D,\lambda}
		=(L_K+\lambda I)^{-1}\mathcal L_Kf^*-(L_{K,D}+\lambda I)^{-1}\mathcal L_{K,D}f^* \nonumber\\
		&=&[(L_K+\lambda I)^{-1}-(L_{K,D}+\lambda I)^{-1}]\mathcal L_Kf^*
		+(L_{K,D}+\lambda I)^{-1}(\mathcal L_{K,D}f^*-\mathcal L_Kf^*) \nonumber\\
		&=&
		(L_{K,D}+\lambda I)^{-1}(L_{K,D}-L_K)(L_K+\lambda I)^{-1}\mathcal L_Kf^*
		+(L_{K,D}+\lambda I)^{-1}(\mathcal L_{K,D}f^*-\mathcal L_Kf^*).
	\end{eqnarray*}
	we obtain
	\begin{eqnarray}\label{approximation-error-small2}
	 \|f_\lambda-f_{D,\lambda}\|_\rho&\leq&
	\|L_K^{1/2} (L_{K,D}+\lambda I)^{-1}(L_{K,D}-L_K)(L_K+\lambda I)^{-1}L_K^{1/2+r}\mathcal L_K^{1/2}h^*\|_K \nonumber \\
	&+&
	\|L_K^{1/2}(L_{K,D}+\lambda I)^{-1}(\mathcal L_{K,D}f^*-\mathcal L_Kf^*)\|_K\nonumber\\
	&\leq&
	\lambda^{r-1/2} \mathcal Q_{D,\lambda}^2\mathcal W_{D,\lambda}\|h^*\|_\rho
	+ \mathcal Q^2_{D,\lambda}\mathcal U_{D,\lambda,f^*}.
	\end{eqnarray}
	It follows from (\ref{Second-order-dec}) with  $A= S_D S_D^T $  and $B=S_DS_D^T+\lambda  I$  and (\ref{exchange}) that
	\begin{eqnarray*}
		&& f_D-f_{D,\lambda}
		=S_D^T((S_DS_D^T)^{-1}-(S_DS_D^T+\lambda)^{-1})\mathcal S_Df^*\\
		&=&
		\lambda  S_D^T(S_DS_D^T+\lambda I)^{-2}\mathcal S_Df^*
		+
		\lambda^2S_D^T(S_DS_D^T+\lambda I)^{-1} (S_DS_D^T)^{-1}(S_DS_D^T+\lambda I)^{-1}\mathcal S_Df^* \\
		&=&
		\lambda(L_{K,D}+\lambda I)^{-2}\mathcal L_{K,D}f^*
		+
		\lambda^2 (L_{K,D}+\lambda I)^{-1} S_D^T(S_DS_D^T)^{-2}S_D(L_{K,D}+\lambda I)^{-1}\mathcal L_{K,D}f^*\\
		&=&
		\lambda(L_{K,D}+\lambda I)^{-2}(\mathcal L_{K,D}-\mathcal L_K)f^*+\lambda(L_{K,D}+\lambda I)^{-2}\mathcal L_Kf^*\\
		&+&
		\lambda^2 (L_{K,D}+\lambda I)^{-1} S_D^T(S_DS_D^T)^{-2}S_D(L_{K,D}+\lambda I)^{-1}(\mathcal L_{K,D}-\mathcal L_K)f^*\\
		&+&
		\lambda^2 (L_{K,D}+\lambda I)^{-1} S_D^T(S_DS_D^T)^{-2}S_D(L_{K,D}+\lambda I)^{-1}\mathcal L_Kf^*.
	\end{eqnarray*}
	Using  (\ref{Def.Q}), (\ref{Def.U}), and (\ref{Codes inequality}) (Appendix A),  and the fact that $\|Af\|_K\leq \|A\|\|f\|_K$ for positive operator $A$ and $f\in\mathcal H_K$, we derive
	\begin{eqnarray*}
		&&\|\lambda(L_{K,D}+\lambda I)^{-2}(\mathcal L_{K,D}-\mathcal L_K)f^*\|_\rho
		=
		\lambda\|L_K^{1/2}(L_{K,D}+\lambda I)^{-2}(\mathcal L_{K,D}-\mathcal L_K)f^*\|_K\\
		&\leq &
		\lambda \mathcal Q_{D,\lambda}\lambda^{-1}\mathcal Q_{D,\lambda}
		\mathcal U_{D,\lambda,f^*}
		=\mathcal Q_{D,\lambda}^2\mathcal U_{D,\lambda,f^*},\\
		&&\|\lambda(L_{K,D}+\lambda I)^{-2}\mathcal L_Kf^*\|_\rho
		\leq\lambda\|L_K^{1/2}(L_{K,D}+\lambda I)^{-2}  L_K^{1/2+r}\|\|\mathcal L_K^{1/2}h^*\|_K\\
		&\leq&
		\lambda\mathcal Q_{D,\lambda} \lambda^{-2+1+r}  \mathcal Q_{D,\lambda}^{2r+1}\|h^*\|_\rho
		=\|h^*\|_\rho\lambda^r\mathcal Q_{D,\lambda}^{2r+2}.
	\end{eqnarray*}
	Incorporating
	(\ref{proj-operator-norm-1}), we further derive that
	\begin{eqnarray*}
		&&
		\|\lambda^2 (L_{K,D}+\lambda I)^{-1} S_D^T(S_DS_D^T)^{-2}S_D(L_{K,D}+\lambda I)^{-1}(\mathcal L_{K,D}-\mathcal L_K)f^*\|_\rho\\
		&\leq&
		\lambda^2\|L_K^{1/2}(L_{K,D}+\lambda I)^{-1}\| \|S_D^T(S_DS_D^T)^{-2}S_D\|\|(L_{K,D}+\lambda I)^{-1}(\mathcal L_{K,D}-\mathcal L_K)f^*\|_K\\
		&\leq&
		\lambda^2\mathcal Q_{D,\lambda}\lambda^{-1/2}(\sigma_m^D)^{-1}\lambda^{-1/2}\mathcal Q_{D,\lambda}\mathcal U_{D,\lambda,f^*}
		=
		\lambda(\sigma_m^D)^{-1}\mathcal Q^2_{D,\lambda}\mathcal U_{D,\lambda,f^*},\\
		&&\|\lambda^2 (L_{K,D}+\lambda I)^{-1} S_D^T(S_DS_D^T)^{-2}S_D(L_{K,D}+\lambda I)^{-1}\mathcal L_Kf^*\|_\rho\\
		&\leq&
		\lambda^2 \|L_K^{1/2}(L_{K,D}+\lambda I)^{-1} \| \|S_D^T(S_DS_D^T)^{-2}S_D\|(L_{K,D}+\lambda I)^{-1} L_K^{1/2+r}\|\|\mathcal L^{1/2}h^*\|_K\\
		&\leq&
		\lambda^2\mathcal Q_{D,\lambda}\lambda^{-1/2}(\sigma_m^D)^{-1}\lambda^{-1/2+r}\mathcal Q_{D,\lambda}^{2r+1}\|h^*\|_\rho
		=
		\lambda^{1+r}(\sigma_m^D)^{-1}\mathcal Q_{D,\lambda}^{2r+2}\|h^*\|_\rho.
	\end{eqnarray*}
	It therefore follows that
	\begin{equation}\label{approximation-error-small3}
	\| f_D-f_{D,\lambda}\|_\rho
	\leq
	(1+\lambda(\sigma_m^D)^{-1}) (\mathcal Q_{D,\lambda}^2\mathcal U_{D,\lambda,f^*}+\|h^*\|_\rho\lambda^r\mathcal Q_{D,\lambda}^{2r+2}).
	\end{equation}
	Plugging (\ref{approximation-error-small1}) and (\ref{approximation-error-small2}) into (\ref{approximation-error-small3}),
	we have
	\begin{eqnarray*}
		&&\|f^*-f_D\|_\rho\leq \lambda^r\|h^*\|_\rho+\lambda^{r-1/2} \mathcal Q_{D,\lambda}^2\mathcal W_{D,\lambda}\|h^*\|_\rho
		+ \mathcal Q_{D,\lambda}\mathcal U_{D,\lambda,f^*}\\
		&+&
		(1+\lambda(\sigma_m^D)^{-1}) (\mathcal Q_{D,\lambda}^2\mathcal U^2_{D,\lambda,f^*}+\|h^*\|_\rho\lambda^r\mathcal Q_{D,\lambda}^{2r+2}),
	\end{eqnarray*}
	which is the desired result.  
\end{proof}

\subsection{Proofs of results stated in Section \ref{Sec.Random}}
In this part, we aim to prove results in Section \ref{sec.spectrum}. The main tools in our analysis are tight bounds for operator differences presented in Appendix B.  Combine Theorem \ref{Theorem:noise-less-operator} with Appendix B, we can prove Theorem \ref{Theorem:native-barrier-without-noise-eigenvalue} as follows.

\begin{proof}[Proof of Theorem \ref{Theorem:noise-less-eigenvalue}]   If $\frac12\leq r\leq 1$, we get from
	Lemma \ref{Lemma:Q} in Appendix B  that with confidence $1-\delta$, there holds
	\begin{eqnarray*}
		\mathcal Q_{D,\lambda}^{2r}
		\leq    2^r\left( 2\kappa(\kappa+8) \mathcal A_{D,\lambda}+1 \right)^{2r} \log^{4r}\frac8\delta.
	\end{eqnarray*}
	Then, it follows from Theorem \ref{Theorem:noise-less-operator} with $u=1/2$ that with confidence $1-\delta$, there holds
	$$
	\|f_D-f^*\|_\rho\leq \min_{\lambda>0}2^{r}\|h^*\|_\rho\lambda^r
	\left( 2\kappa(\kappa+8)\mathcal A_{D,\lambda}+1 \right)^{2r} \log^{4r}\frac8\delta.
	$$
	If $r>1$, then Lemma \ref{Lemma:operator-difference} and Lemma \ref{Lemma:Q} in Appendix B  show  that with confidence $1-\delta$, there holds
	$$
	\mathcal Q_{D,\lambda}\mathcal R_{D}
	\leq
	\frac{2\sqrt{2}\kappa^2}{\sqrt{m}}   ( 2\kappa(\kappa+8) \mathcal A_{D,\lambda}+1 )  \log^3\frac{16}\delta.
	$$
	Plugging the above estimate into (\ref{noise-less-operator}) with $u=1/2$, we have
	$$
	\|f_D-f^*\|_\rho
	\leq
	2\sqrt{2}(r-1/2)\kappa^{r+1/2}\|h^*\|_\rho\log^3\frac{16}\delta  \min_{\lambda>0}  \sqrt{\frac{\lambda}{m}}( 2\kappa(\kappa+8) \mathcal A_{D,\lambda}+1).
	$$
	This completes the proof of Theorem \ref{Theorem:noise-less-eigenvalue} with $$
	C_1:=\left\{\begin{array}{ll}
	2^{r}\|h^*\|_\rho(\max\{2\kappa(\kappa+8),1\})^{2r},& \quad \frac12\leq r\leq 3/2,\\
	2\sqrt{2}(r-1/2)\kappa^{r+1/2}\|h^*\|_\rho\max\{2\kappa(\kappa+8),1\},&
	\quad  r>3/2.
	\end{array}
	\right.
	$$ 
\end{proof}

Similar as above, we can prove Theorem \ref{Theorem:Stability-via-eigenvalue} via Theorem \ref{Theorem:noise-less-eigenvalue} and Appendix B.

\begin{proof}[Proof of Theorem \ref{Theorem:Stability-via-eigenvalue}]
	Due to (\ref{model-2}), we obtain $|y_i|\leq \|f^*\|_\infty+\gamma$. Then,
	it follows from Lemma \ref{Lemma:Q} in Appendix with $M=\|f^*\|_\infty+\gamma$ that with confidence $1-\delta$, there holds
	$$
	\mathcal Q_{D,\mu}^2\mathcal P_{D,\mu}
	\leq
	8(\|f^*\|_\infty+\gamma)( 2\kappa(\kappa+8)\mathcal A_{D,\mu}+1)^2
	(\kappa+8)\sqrt{\mu}\mathcal A_{D,\mu} \log^6\frac{16}{\delta}.
	$$
	This together with Theorem \ref{Theorem:Stability-via-operator} for $u=1/2$, (\ref{operator-matrix-eigenvalue}) and  (\ref{noise-less-eigenvalue}) completes the proof of Theorem \ref{Theorem:Stability-via-eigenvalue} with 
	$$
	C_2:=16(\|f^*\|_\infty+\gamma)(\kappa+8)\max\{2\kappa(\kappa+8),1\}
	$$.  
\end{proof}

The proof Theorem \ref{Theorem:native-barrier-without-noise-eigenvalue} is trivial and can be derived from Theorem \ref{Theorem:native-barrier-without-noise} and Appendix directly.

\begin{proof}[Proof of Theorem \ref{Theorem:native-barrier-without-noise-eigenvalue}]
	It follows from Lemma \ref{Lemma:Q} that with confidence $1-\delta$, there holds
	\begin{eqnarray*}
		\mathcal Q_{D,\lambda}^{2r+2}&\leq& 2^{r+1}
		(2\kappa(\kappa+8)\mathcal A_{D,\lambda}+1)^{2r+2}    \log^{4r+4}\frac{24}\delta,\\
		\mathcal Q^2_{D,\lambda}\mathcal U_{D,\lambda,f^*}
		&\leq&
		\frac{4\|f\|_\infty}{\kappa}(2\kappa(\kappa+8)\mathcal A_{D,\lambda}+1)^{2} \sqrt{\lambda}(\kappa+8)\mathcal A_{D,\lambda}\log^6\frac{24}{\delta},\\
		\mathcal Q_{D,\lambda}^2\mathcal W_{D,\lambda}&\leq&
		4\kappa
		(2\kappa(\kappa+8)\mathcal A_{D,\lambda}+1)^{2}\sqrt{\lambda}(\kappa+8)\mathcal A_{D,\lambda}  \log^{6}\frac{24}\delta.
	\end{eqnarray*}
	Plugging the above three estimates into  (\ref{Native-barrier-error}) with $u=1/2$, we obtain (\ref{Native-barrier-error-eigenvalue}) to complete the proof of Theorem \ref{Theorem:Stability-via-eigenvalue} with 	
	$$
	C_{3}:=(\max\{2\kappa(\kappa+8),1\})^{2r+2}\}\max\{2^{r+1}\|h^*\|_\rho,
	(2\|f\|_\infty/\kappa+\kappa\|h^*\|_\rho)2\kappa (\kappa+8)\}.
	$$ 
\end{proof}

\subsection{Proofs of results stated in Section \ref{sec.spectrum}} 
In this part, we focus on proving results concerning eigen-values. The main idea in the proof of Proposition \ref{Proposition:uniform-spectrum} is a close relation between effective dimension and its empirical counterpart. 
\begin{proof}[Proof of Proposition \ref{Proposition:uniform-spectrum}]
	For an arbitrary $0 \le \lambda \le 1$, define the effective dimension  and  empirical effective dimension \cite{Mucke2018} to be
	\begin{equation}\label{Effective-dimension}
	\mathcal{N}(\lambda)={\rm Tr}((\lambda I+L_K)^{-1}L_K),  \qquad
	\mathcal{N}_D(\lambda)={\rm Tr}((\lambda I+L_{K,D})^{-1}L_{K,D}).
	\end{equation}
	Since
	$$
	\mathcal N_{D}(\lambda)={\rm Tr}[(L_{K,D}+\lambda I)^{-1}L_{K,D}]
	={\rm Tr}[(\lambda m I+\mathbb K)^{-1}\mathbb K],
	$$
	we have by (\ref{Def.A}) that
	$$
	\mathcal A_{D,\lambda}=
	\left(\frac{1}{m\lambda}+\frac{1}{\sqrt{m\lambda}}\right)\max\left\{1,\sqrt{ \mathcal N_D(\lambda)}\right\}.
	$$
	Furthermore, Lemma \ref{Lemma:Effective dimension} (in Appendix B) asserts that with confidence $1-\delta$, there holds
	$$
	\sqrt{\max\{\mathcal N_D(\lambda),1\}}
	\leq   17\left(1+\frac{1}{m\lambda}\right)\sqrt{\max\{\mathcal
		N(\lambda),1\}}\log^2\frac{4}{\delta},
	$$
	implying
	\begin{equation}\label{bound-A-1}
	\mathcal A_{D,\lambda}
	\leq
	17\left(\frac{1}{m\lambda}+\frac{1}{\sqrt{m\lambda}}\right)\left(1+\frac{1}{m\lambda}\right)\sqrt{\max\{\mathcal
		N(\lambda),1\}}\log^2\frac{4}{\delta}.
	\end{equation}
	What remains in the proof is to bound the effective dimension $\mathcal N(\lambda)$ under the assumption in (\ref{Eigenvalue-decay-ass-alg}) or  (\ref{Eigenvalue-decay-ass-exp}), which we will treat separately.
	If  (\ref{Eigenvalue-decay-ass-alg}) holds true, then
	\begin{eqnarray*}
		\mathcal N(\lambda)
		&=&
		\sum_{\ell=1}^\infty\frac{\sigma_\ell}{\lambda+\sigma_\ell}
		\leq
		\sum_{\ell=1}^\infty\frac{c_0\ell^{-\beta}}{\lambda+c_0\ell^{-\beta}}
		=
		\sum_{\ell=1}^\infty\frac{c_0}{c_0+\lambda\ell^{\beta}}\\
		&\leq&
		\int_0^\infty\frac{c_0}{c_0+\lambda t^{\beta} }dt
		\leq
		c_1 \lambda^{-1/\beta},
	\end{eqnarray*}
	where $c_1$ is a constant depends only on $c_0$. Plugging  the above estimate into (\ref{bound-A-1}), we have, with confidence $1-\delta$,
	$$
	\mathcal A_{D,\lambda}
	\leq
	17\sqrt{c_1}\left(\frac{1}{m\lambda}+\frac{1}{\sqrt{m\lambda}}\right)\left(1+\frac{1}{m\lambda}\right)\lambda^{-1/(2\beta)}\log^2\frac{4}{\delta}.
	$$
	If  (\ref{Eigenvalue-decay-ass-exp}) holds true, then
	\begin{eqnarray*}
		\mathcal N(\lambda)
		\leq
		\sum_{\ell=1}^\infty\frac{c_0 e^{-\alpha  \ell^{1/d}}}{\lambda+c_0 e^{-\alpha \ell^{1/d}}}
		=
		\sum_{\ell=1}^\infty\frac{c_0}{c_0+\lambda e^{\alpha\ \ell^{1/d}}}
		\leq
		\int_0^\infty\frac{c_0}{c_0+\lambda  e^{ \alpha\ t^{1/d}}}dt
		\leq c_2 d! \ \alpha^{-d}\  \log^d \frac1\lambda, 
	\end{eqnarray*}
	where $c_2$ is a constant depending only on $c_0$. In deriving the last inequality above, we have used Lemma \ref{multi-log-esti} (in Appendix B). Substituting the last inequality above into (\ref{bound-A-1}), we have, with confidence $1-\delta$,
	$$
	\mathcal A_{D,\lambda}
	\leq
	17c_2 \sqrt{d!}\ \alpha^{-d/2}\left(\frac{1}{m\lambda}+\frac{1}{\sqrt{m\lambda}}\right)\left(1+\frac{1}{m\lambda}\right)\log^{d/2}\frac1\lambda \log^2\frac{4}{\delta}.
	$$
	This completes the proof of Proposition \ref{Proposition:uniform-spectrum}.
\end{proof}

The proofs of Corollary \ref{Corollary:noise-less-rate} and Corollary \ref{Corollary:noise-less-rate-G}  can be directly derived from Theorem \ref{Theorem:noise-less-eigenvalue} and Proposition \ref{Proposition:uniform-spectrum}.

\begin{proof}[Proof of Corollary \ref{Corollary:noise-less-rate}]
	It follows from Proposition \ref{Proposition:uniform-spectrum} with $\beta=2\tau/d$ that
	\begin{equation}\label{A-for-sobolev-spline}
	\mathcal A_{D,\lambda}\leq  C_1 \left(\frac{1}{m\lambda}+\frac{1}{\sqrt{m\lambda}}\right)\left(1+\frac{1}{m\lambda}\right)
	\lambda^{-d/(4\tau)}\log^2\frac{4}{\delta}
	\end{equation}
	holds with confidence $1-\delta$.
	Let $\lambda=m^{-\frac{2\tau}{2\tau+d}}$, we have with confidence $1-\delta$ that
	$$
	\mathcal A_{D,\lambda}\leq4C_1\log^2\frac{4}{\delta}.
	$$
	The desired result follows from the above inequality and  (\ref{noise-less-eigenvalue}).
	This completes the proof of Corollary \ref{Corollary:noise-less-rate}.
\end{proof}

\begin{proof}[Proof of Corollary \ref{Corollary:noise-less-rate-G}]
	It follows from Proposition \ref{Proposition:uniform-spectrum} with $\alpha=a$ that
	\begin{equation}\label{A-for-sobolev-spline1}
	\mathcal A_{D,\lambda}\leq  C_4\sqrt{d}a^{-d/2} \left(\frac{1}{m\lambda}+\frac{1}{\sqrt{m\lambda}}\right)\left(1+\frac{1}{m\lambda}\right)\log^{d/2}\frac1\lambda
	\log^2\frac{4}{\delta}
	\end{equation}
	holds with confidence $1-\delta$.
	Let $\lambda=m^{-1}$, we have with confidence $1-\delta$ that
	$$
	\mathcal A_{D,\lambda}\leq4C_4\sqrt{d}a^{-d/2}\log^{d/2}m\log^2\frac{4}{\delta}.
	$$
	The desired result follows from the above inequality and  (\ref{noise-less-eigenvalue}).
	This completes the proof of Corollary \ref{Corollary:noise-less-rate-G}.  
\end{proof} 
\section*{Appendix A: Positive Operator Theory}
We  include here several definitions and properties of positive operators. We refer readers to \cite{Bhatia2013} for more details.
For two Hilbert spaces $\mathcal H_1,\mathcal H_2$, denote by
$\mathcal L(\mathcal H_1,\mathcal H_2)$   the space of all
bounded linear operators from $\mathcal H_1$ to $\mathcal H_2$.
Given a  linear operator $A$, its adjoint operator, denoted by $A^T\in \mathcal
L(\mathcal H_2,\mathcal H_1)$ is defined to be

$$
\langle A^Tg,f\rangle_{\mathcal H_1}=  \langle g, Af\rangle_{\mathcal H_2}
$$
for any $f\in\mathcal H_1$ and $g\in\mathcal H_2$. Denote
$\mathcal L(\mathcal H)=\mathcal L(\mathcal H,\mathcal H)$.
For $A\in\mathcal L(\mathcal H)$, define its operator norm as
$$
\|A\|=\sup_{\|f\|_{\mathcal H}=1}\|Af\|_{\mathcal H}.
$$

We say an operator $A\in\mathcal L(\mathcal H)$ to be  self-adjoint, if $A=A^T$. Furthermore, we say an operator $A$ to be positive, if $A\in\mathcal L(\mathcal H)$ is self-adjoint and $\langle f,Af\rangle_{\mathcal H}\geq 0$ for all $f\in\mathcal H$.
A bounded linear  operator $A$  is said to be compact, if  the image under $A$ of any bounded subset of $\mathcal H$    is a relatively compact subset  (has compact closure) of $\mathcal H$.
If $A$ is compact and positive, then
there exists a normalized eigenpairs
of $A$, denoted by $\{(\sigma_\ell,\phi_\ell)\}_{\ell=1}^\infty$ with
$\sigma_1\geq\sigma_2\geq\dots\geq0$ and
$\{\phi_\ell\}_{\ell=1}^\infty$ forming an orthonormal basis of
$\mathcal H$.
For $F:\mathbb R_+\cap \{0\}\rightarrow \mathbb R,$   define
$$
F(A)=\sum_{\ell=1}^\infty F(\sigma_\ell)\phi_\ell\otimes\phi_\ell
=\sum_{\ell=1}^\infty F(\sigma_\ell)\langle\cdot,\phi_\ell\rangle_{\mathcal H}\phi_\ell.
$$
The trace and  Hilbert-Schmidt norm  of the positive operator  $A$
is denoted by
$$
\mbox{Tr}(A)=\sum_{i=1}^\infty \sigma_i,
$$
and
$$
\|A\|_{HS}=(\mbox{Tr}(A^2))^{1/2}=\left(\sum_{j=1}^\infty \sigma_j^2\right)^{1/2},
$$
respectively. If $\|A\|_{HS}<+\infty$, we then call $A$ a Hilbert-Schmidt operator. If $A$ and $B$ are Hilbert-Schmidt, then
$$
\|A\| \leq \|A\|_{HS},\qquad \|AB\|_{HS}\leq \|A\|\|B\|_{HS}.
$$

In the following, we present some important properties of positive operators $A,B$, which are well known and can be found in \cite{Bhatia2013}.
Since   positive operators are always self-adjoint, we have
$$
\|AB\|=\|BA\|.
$$
If $A-B$ is also a positive operator, then
$$
\|Bf\|_{\mathcal H}\leq \|Af\|_{\mathcal H}.
$$
For any $0\leq\tau\leq 1$, the Cordes inequality \cite{Bhatia2013} shows
\begin{equation}\label{Codes inequality}
\|A^\tau B^\tau\|\leq \|AB\|^\tau.
\end{equation}
If in addition $A$ and $B$ are Hilbert-Schmidt and satisfy $\max\{\|A\|,\|B\|\}\leq \kappa$, then the Lipschitz property  yields
\begin{equation}\label{lip-operator}
\|A^\nu-B^\nu\|_{HS}\leq  \left\{\begin{array}{ll}
\nu\kappa^{\nu-1}\|A-B\|_{HS},& \quad \nu\geq 1,\\
\|A-B\|_{HS}^\nu, & \quad 0<\nu<1.
\end{array}
\right.
\end{equation}

\section*{Appendix B: Bounds for Operator Differences with Random Sampling}

Bounding $\mathcal R_D$, which can be loosely considered as the difference between $L_K$ and its empirical version $L_{K,D}$,  is a classical topic in statistical learning theory \cite{Smale2005,Smale2007,Yao2007,Caponnetto2007,Blanchard2016,Lin2017,Guo2017}.
Using the classical Bernstein inequality for Banach-valued functions in \cite{Pinelis1994}, authors of  \cite[Prop. 5.3]{Yao2007} and \cite[Lemma 17]{Lin2017} give tight upper bounds of $\mathcal R_D$ and $\mathcal W_{D,\lambda}$, which we quote in the following lemma.

\begin{lemma}\label{Lemma:operator-difference}
	Let
	$\delta\in(0,1)$ and $\lambda>0$. With confidence at least $1-\delta,$ there holds
	\begin{eqnarray}
	\mathcal R_{D}
	&\leq&
	\frac{2\kappa^2}{\sqrt{m}}\log^{1/2}\frac2\delta,\label{bound.RD}\\
	\mathcal W_{D,\lambda}
	&\leq&
	{\mathcal B}_{m, \lambda} \log \frac2\delta,
	\end{eqnarray}
	where $\mathcal N(\lambda)$ is defined by (\ref{Effective-dimension}) and
	\begin{equation}\label{Definition B}
	\mathcal B_{m,\lambda}=\frac{2\kappa}{\sqrt{m}}\left\{\frac{\kappa}{\sqrt{m\lambda}}
	+\sqrt{\mathcal{N}(\lambda)}\right\}.
	\end{equation}
\end{lemma}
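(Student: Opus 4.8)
The plan is to realize both $\mathcal R_D$ and $\mathcal W_{D,\lambda}$ as norms of empirical averages of independent, mean-zero, Hilbert-space-valued random variables, and then to invoke the Bernstein-type concentration inequality of Pinelis \cite{Pinelis1994} for vector-valued sums. The ambient Hilbert space in both cases is the space of Hilbert--Schmidt operators on $\mathcal H_K$, into which the relevant quantities embed once we use the reproducing property to write $L_{K,D}=\frac1m\sum_{i=1}^m K_{x_i}\otimes K_{x_i}$ and $L_K=\mathbb E_{x\sim\rho_X}[K_x\otimes K_x]$, where $K_x\otimes K_x$ denotes the rank-one operator $f\mapsto\langle f,K_x\rangle_K K_x$.

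First I would treat $\mathcal R_D$. Set $\eta_i:=K_{x_i}\otimes K_{x_i}-L_K$, so that $L_{K,D}-L_K=\frac1m\sum_{i=1}^m\eta_i$ with $\mathbb E[\eta_i]=0$. Since $\|K_x\otimes K_x\|_{HS}=\|K_x\|_K^2=K(x,x)\leq\kappa^2$ and likewise $\|L_K\|_{HS}\leq\kappa^2$, the triangle inequality gives the almost-sure bound $\|\eta_i\|_{HS}\leq 2\kappa^2$, while $\mathbb E\|\eta_i\|_{HS}^2\leq\mathbb E\|K_x\otimes K_x\|_{HS}^2=\mathbb E[K(x,x)^2]\leq\kappa^4$ controls the variance. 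Feeding these two quantities into the Pinelis inequality yields, with confidence $1-\delta$, a bound of the advertised order $\kappa^2 m^{-1/2}\log^{1/2}(2/\delta)$, after balancing the sub-Gaussian (variance) term against the lower-order $O(1/m)$ sub-exponential contribution and absorbing constants.

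The bound for $\mathcal W_{D,\lambda}$ is the more delicate one, and is where the effective dimension enters. I would precondition by $(L_K+\lambda I)^{-1/2}$ and set $\zeta_i:=(L_K+\lambda I)^{-1/2}(K_{x_i}\otimes K_{x_i}-L_K)$, so that $(L_K+\lambda I)^{-1/2}(L_{K,D}-L_K)=\frac1m\sum_i\zeta_i$ is again a mean-zero empirical average, and the spectral norm $\mathcal W_{D,\lambda}$ is dominated by the Hilbert--Schmidt norm of this sum. Since $(L_K+\lambda I)^{-1/2}(K_x\otimes K_x)$ is the rank-one operator $[(L_K+\lambda I)^{-1/2}K_x]\otimes K_x$, its Hilbert--Schmidt norm factors as $\|(L_K+\lambda I)^{-1/2}K_x\|_K\,\|K_x\|_K$, giving the almost-sure bound $\|\zeta_i\|_{HS}\leq 2\kappa^2/\sqrt{\lambda}$ via $\|(L_K+\lambda I)^{-1/2}\|\leq\lambda^{-1/2}$ and $\|K_x\|_K\leq\kappa$. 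The crucial point is the variance, where
\[
\mathbb E\|\zeta_i\|_{HS}^2\leq\kappa^2\,\mathbb E\big[\langle(L_K+\lambda I)^{-1}K_x,K_x\rangle_K\big]=\kappa^2\,\mathrm{Tr}\big((L_K+\lambda I)^{-1}L_K\big)=\kappa^2\,\mathcal N(\lambda),
\]
the middle identity following by moving the expectation inside the trace and using $\mathbb E[K_x\otimes K_x]=L_K$. Applying Pinelis with almost-sure bound $2\kappa^2/\sqrt{\lambda}$ and variance proxy $\kappa^2\mathcal N(\lambda)$ reproduces exactly the two terms of $\mathcal B_{m,\lambda}=\frac{2\kappa}{\sqrt m}\{\kappa/\sqrt{m\lambda}+\sqrt{\mathcal N(\lambda)}\}$, once the differing powers of $\log(2/\delta)$ are consolidated into a single factor.

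The main obstacle --- and the only genuinely non-mechanical step --- is precisely this variance computation for $\mathcal W_{D,\lambda}$: recognizing that the second moment of the preconditioned rank-one operators collapses to $\kappa^2\mathcal N(\lambda)$ through the trace identity is what converts an otherwise crude $\lambda^{-1}$-type estimate into the sharp, effective-dimension-controlled bound that the applications in Section~\ref{Sec.Random} require. Everything else is routine bookkeeping with operator norms, the inequality $\|A\|\leq\|A\|_{HS}$, and the standard confidence form of the Bernstein inequality.
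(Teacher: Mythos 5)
Your proof is correct and is essentially the same argument that stands behind the paper's lemma: the paper itself never proves Lemma \ref{Lemma:operator-difference} but quotes it from \cite[Prop.~5.3]{Yao2007} and \cite[Lemma~17]{Lin2017}, and those proofs proceed exactly as you do --- write $L_{K,D}-L_K$ (and its $(L_K+\lambda I)^{-1/2}$-preconditioned version) as an empirical mean of i.i.d.\ Hilbert--Schmidt-valued random variables, bound the almost-sure norm and the second moment, the latter via the trace identity $\mathbb{E}\bigl\langle (L_K+\lambda I)^{-1}K_x,K_x\bigr\rangle_K=\mathrm{Tr}\bigl((L_K+\lambda I)^{-1}L_K\bigr)=\mathcal N(\lambda)$, and invoke Pinelis' Bernstein inequality, dominating the spectral norm by the Hilbert--Schmidt norm. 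One caveat on constants: for $\mathcal R_D$ the Bernstein bound carries an extra sub-exponential term of order $\kappa^2\log(2/\delta)/m$ which cannot be ``absorbed'' into $\frac{2\kappa^2}{\sqrt m}\log^{1/2}\frac2\delta$ uniformly in $m$ and $\delta$ with the stated constant, so to recover \eqref{bound.RD} verbatim one should instead combine a bounded-difference (McDiarmid-type) inequality in Hilbert space with the expectation bound $\mathbb{E}\,\mathcal R_D\le\kappa^2/\sqrt m$ (or tolerate a slightly larger constant or a mild requirement $m\gtrsim\log(2/\delta)$); for $\mathcal W_{D,\lambda}$ no such issue arises --- since $\mathcal B_{m,\lambda}$ itself retains both terms --- provided you apply the concentration inequality to the uncentered variables, whose almost-sure bound is $\kappa^2/\sqrt\lambda$ rather than your $2\kappa^2/\sqrt\lambda$.
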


The following lemma provides an upper bound for $\mathcal Q_{D,\lambda}$. A first similar result is  given in \cite{Blanchard2016} under  a mild restriction on $m$ and $\delta$. In \cite{Guo2017}, the restriction is removed by using the second order decomposition technique (\ref{Second-order-dec}). The following lemma is derived from \cite[Prop.1]{Guo2017} and Cordes inequality (\ref{Codes inequality}).

\begin{lemma}\label{Lemma:Operator-product} For any $0<\delta<1$ and $\lambda>0$, with confidence
	$1-\delta,$ there holds
	$$
	\mathcal Q_{D,\lambda}\leq
	\sqrt{2}  \left(\frac{\mathcal B_{m,\lambda} }
	{\sqrt{\lambda}}+1 \right)   \log\frac2\delta.
	$$
\end{lemma}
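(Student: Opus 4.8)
The plan is to reduce the bound on $\mathcal Q_{D,\lambda}$ to a controllable operator product and then apply the second-order decomposition \eqref{Second-order-dec}, which is exactly the device of \cite[Prop.~1]{Guo2017} that dispenses with the restriction on $m$ and $\delta$ present in \cite{Blanchard2016}. Writing $A:=L_{K,D}+\lambda I$ and $B:=L_K+\lambda I$, both positive and boundedly invertible for $\lambda>0$, I first pass to the symmetric positive form
\[
\mathcal Q_{D,\lambda}^2=\|(L_{K,D}+\lambda I)^{-1/2}(L_K+\lambda I)^{1/2}\|^2=\|B^{1/2}A^{-1}B^{1/2}\|,
\]
using $\|T\|^2=\|T^{T}T\|$. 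One may equivalently stay at the level of $\|BA^{-1}\|$ and descend to the half-powers by the Cordes inequality \eqref{Codes inequality} with $\tau=1/2$; the two routes give the same estimate, and this is the sense in which the lemma is ``derived from Cordes inequality.''

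Next I apply \eqref{Second-order-dec} with this $A$ and $B$, so that $B-A=L_K-L_{K,D}=:\Delta$ is self-adjoint and Hilbert--Schmidt, to obtain
\[
A^{-1}=B^{-1}+B^{-1}\Delta B^{-1}+B^{-1}\Delta A^{-1}\Delta B^{-1}.
\]
Conjugating by $B^{1/2}$ gives
\[
B^{1/2}A^{-1}B^{1/2}=I+B^{-1/2}\Delta B^{-1/2}+(B^{-1/2}\Delta A^{-1/2})(A^{-1/2}\Delta B^{-1/2}).
\]
The key observation is that the definition of $\mathcal W_{D,\lambda}$ gives $\|B^{-1/2}\Delta\|=\mathcal W_{D,\lambda}$, while $\|B^{-1/2}\|\le\lambda^{-1/2}$ and $\|A^{-1/2}\|\le\lambda^{-1/2}$. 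Hence the middle term is bounded by $\mathcal W_{D,\lambda}\lambda^{-1/2}$, and the last term, being a product of two mutually adjoint factors, is bounded by $\|B^{-1/2}\Delta A^{-1/2}\|^{2}\le(\mathcal W_{D,\lambda}\lambda^{-1/2})^{2}$. Collecting terms and using $1+t+t^{2}\le(1+t)^{2}$ with $t=\mathcal W_{D,\lambda}/\sqrt\lambda$ yields
\[
\mathcal Q_{D,\lambda}^{2}\le 1+\frac{\mathcal W_{D,\lambda}}{\sqrt\lambda}+\frac{\mathcal W_{D,\lambda}^{2}}{\lambda}\le\Big(1+\frac{\mathcal W_{D,\lambda}}{\sqrt\lambda}\Big)^{2},
\]
so that $\mathcal Q_{D,\lambda}\le 1+\mathcal W_{D,\lambda}/\sqrt\lambda$.

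Finally I substitute the high-probability bound $\mathcal W_{D,\lambda}\le {\mathcal B}_{m,\lambda}\log\frac2\delta$ from Lemma \ref{Lemma:operator-difference}, valid with confidence $1-\delta$, and absorb the additive constant into the stated prefactor (using $1\le\sqrt2\log\frac2\delta$ in the relevant range of $\delta$) to reach $\mathcal Q_{D,\lambda}\le\sqrt2\big({\mathcal B}_{m,\lambda}/\sqrt\lambda+1\big)\log\frac2\delta$.

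The main subtlety, and the whole reason for invoking the \emph{second}-order rather than the first-order decomposition \eqref{first-order-dec}, is the quadratic term. A first-order expansion would instead produce a Neumann factor $(1-\mathcal W_{D,\lambda}/\sqrt\lambda)^{-1}$ that is meaningful only when $\mathcal W_{D,\lambda}/\sqrt\lambda<1$, i.e. under a lower bound on $m\lambda$; the quadratic estimate above is unconditional in $m$, $\lambda$ and $\delta$, which is precisely the improvement of \cite{Guo2017} over \cite{Blanchard2016}. I therefore expect the only delicate bookkeeping to be the handling of the cross term $B^{-1/2}\Delta A^{-1}\Delta B^{-1/2}$: it must be split into mutually adjoint halves so that each factor sees $\|B^{-1/2}\Delta\|=\mathcal W_{D,\lambda}$ and a spare $\|A^{-1/2}\|\le\lambda^{-1/2}$, rather than the plain operator norm $\|\Delta\|$, which would introduce an unwanted $\mathcal R_D\lambda^{-3/2}$ factor and break the clean bound.
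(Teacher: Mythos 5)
Your proof is correct, and it is in essence a self-contained reconstruction of the argument the paper only cites: the paper obtains this lemma by invoking \cite[Prop.~1]{Guo2017} together with the Cordes inequality (\ref{Codes inequality}), while you re-derive the underlying bound directly. The mathematical core is the same device---the second-order decomposition (\ref{Second-order-dec}) applied to $A=L_{K,D}+\lambda I$ and $B=L_K+\lambda I$---but your presentation improves on the cited route in one respect: by symmetrizing, $\mathcal Q_{D,\lambda}^2=\|B^{1/2}A^{-1}B^{1/2}\|$, and splitting the quadratic term into the mutually adjoint factors $(B^{-1/2}\Delta A^{-1/2})(A^{-1/2}\Delta B^{-1/2})$, you dispense with Cordes entirely and arrive at the clean deterministic inequality $\mathcal Q_{D,\lambda}\le 1+\mathcal W_{D,\lambda}/\sqrt{\lambda}$, after which Lemma \ref{Lemma:operator-difference} finishes the job. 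Your explanation of why the second-order rather than first-order decomposition is essential---avoiding a Neumann-type condition $\mathcal W_{D,\lambda}<\sqrt{\lambda}$, i.e.\ a restriction on $m\lambda$ and $\delta$---is exactly the improvement of \cite{Guo2017} over \cite{Blanchard2016} that the paper alludes to.

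One caveat, which you flagged yourself: the final absorption $1\le\sqrt{2}\log\frac{2}{\delta}$ is valid only for $\delta\le 2e^{-1/\sqrt{2}}\approx 0.986$, not for all $\delta\in(0,1)$ as the lemma asserts. This is a defect of the statement rather than of your argument: one always has $\mathcal Q_{D,\lambda}\ge 1$ (apply $A^{-1/2}B^{1/2}$ to $B^{-1/2}g$ with $0\ne g$ in the null space of the finite-rank operator $L_{K,D}$), so when $\delta$ is close to $1$ and $\mathcal B_{m,\lambda}/\sqrt{\lambda}$ is small the right-hand side of the lemma drops below $1$, and the claimed inequality cannot hold on any event of positive probability. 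The same slack is implicit in the paper's own derivation from \cite{Guo2017}; your intermediate bound $\mathcal Q_{D,\lambda}\le 1+(\mathcal B_{m,\lambda}/\sqrt{\lambda})\log\frac{2}{\delta}$ is the correct and sharper form, and it suffices for every subsequent use of the lemma in the paper.
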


The following lemma, which can be found in \cite[eq. (48)]{Caponnetto2007}, gives an upper bound for $\mathcal P_{D,\lambda}, $ which measures the difference between
$L_{K,D}f^*$ and $S_D^Ty_D$. 

\begin{lemma}\label{Lemma:functional-difference}
	Let $\delta\in(0,1)$ and $\lambda>0$.  If $|y|\le M$  almost surely, then
	with confidence at least $1-\delta,$ there holds
	\[
	\mathcal P_{D,\lambda}\le
	\frac{2M}{\kappa}\mathcal B_{m,\lambda}\log\frac{2}{\delta}.
	\]
\end{lemma}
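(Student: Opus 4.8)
The plan is to identify $\mathcal P_{D,\lambda}$ as the norm of a normalized sum of independent, mean-zero random vectors in $\mathcal H_K$, and then invoke the Bernstein-type inequality for Hilbert-space-valued random variables from \cite{Pinelis1994}. Since $L_{K,D}f^*-S_D^Ty_D=\frac1m\sum_{i=1}^m(f^*(x_i)-y_i)K_{x_i}$, I would set $\eta_i:=(L_K+\lambda I)^{-1/2}(f^*(x_i)-y_i)K_{x_i}\in\mathcal H_K$, so that by (\ref{Def.P}) we have $\mathcal P_{D,\lambda}=\bigl\|\frac1m\sum_{i=1}^m\eta_i\bigr\|_K$. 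Because the $(x_i,y_i)$ are i.i.d.\ and, under model (\ref{model-2}), $E[y_i\mid x_i]=f^*(x_i)$, the $\eta_i$ are i.i.d.\ with $E[\eta_i]=(L_K+\lambda I)^{-1/2}E[(f^*(x)-y)K_x]=0$. The concentration inequality then reduces the whole estimate to two ingredients: an almost-sure bound $\|\eta_i\|_K\le\tilde M$ and a second-moment bound $E\|\eta_i\|_K^2\le\sigma^2$.

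For the almost-sure bound I would combine $|f^*(x_i)-y_i|\le2M$, valid because $|y_i|\le M$ and $|f^*(x_i)|=|E[y_i\mid x_i]|\le M$, with the elementary estimate
\[
\|(L_K+\lambda I)^{-1/2}K_x\|_K^2=\langle(L_K+\lambda I)^{-1}K_x,K_x\rangle_K\le\lambda^{-1}\|K_x\|_K^2\le\kappa^2/\lambda ,
\]
giving $\tilde M=2M\kappa/\sqrt\lambda$. For the second moment, the effective dimension (\ref{Effective-dimension}) enters: the reproducing property gives $(K_x\otimes K_x)f=f(x)K_x$, whence $E[K_x\otimes K_x]=L_K$, and therefore
\[
E\|(L_K+\lambda I)^{-1/2}K_x\|_K^2={\rm Tr}\bigl((L_K+\lambda I)^{-1}E[K_x\otimes K_x]\bigr)={\rm Tr}\bigl((L_K+\lambda I)^{-1}L_K\bigr)=\mathcal N(\lambda).
\]
Bounding the scalar factor again by $(2M)^2$ yields $\sigma^2=4M^2\mathcal N(\lambda)$.

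Feeding $\tilde M=2M\kappa/\sqrt\lambda$ and $\sigma=2M\sqrt{\mathcal N(\lambda)}$ into the Pinelis--Bernstein inequality in the form $\bigl\|\frac1m\sum_{i=1}^m\eta_i\bigr\|_K\le2\bigl(\frac{\tilde M}{m}+\frac{\sigma}{\sqrt m}\bigr)\log\frac2\delta$ produces, with confidence $1-\delta$,
\[
\mathcal P_{D,\lambda}\le\frac{4M}{\sqrt m}\Bigl(\frac{\kappa}{\sqrt{m\lambda}}+\sqrt{\mathcal N(\lambda)}\Bigr)\log\frac2\delta=\frac{2M}{\kappa}\,\mathcal B_{m,\lambda}\log\frac2\delta ,
\]
the final identity being just the definition (\ref{Definition B}) of $\mathcal B_{m,\lambda}$, in which the $\frac{2M}{\kappa}$ prefactor cancels the $2\kappa$ inside $\mathcal B_{m,\lambda}$ to leave $4M$. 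I expect the main obstacle to be the second-moment step: one must correctly pass from $E\|\eta_i\|_K^2$ to the trace ${\rm Tr}((L_K+\lambda I)^{-1}L_K)$ via the operator identity $E[K_x\otimes K_x]=L_K$, after which matching the numerical constants is purely bookkeeping; the remaining subtlety is simply to keep track of the version of the vector Bernstein inequality that produces a single power of $\log\frac2\delta$ rather than $\log^{1/2}\frac2\delta$.
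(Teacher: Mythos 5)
Your proof is correct, and the bookkeeping comes out exactly: feeding $\tilde M=2M\kappa/\sqrt{\lambda}$ and $\sigma=2M\sqrt{\mathcal N(\lambda)}$ into the Pinelis--Bernstein bound $2\bigl(\tilde M/m+\sigma/\sqrt{m}\bigr)\log\frac{2}{\delta}$ gives $\frac{4M}{\sqrt{m}}\bigl(\frac{\kappa}{\sqrt{m\lambda}}+\sqrt{\mathcal N(\lambda)}\bigr)\log\frac{2}{\delta}$, which is literally $\frac{2M}{\kappa}\mathcal B_{m,\lambda}\log\frac{2}{\delta}$ by (\ref{Definition B}). The comparison with the paper is quickly made: the paper does not prove Lemma \ref{Lemma:functional-difference} at all --- it imports it verbatim from \cite[eq.~(48)]{Caponnetto2007} --- so your argument is a self-contained reconstruction of what that citation hides, and it is in fact the standard argument used there (and the same mechanism the paper's Appendix~B invokes for $\mathcal R_D$ and $\mathcal W_{D,\lambda}$): write $L_{K,D}f^*-S_D^Ty_D$ as a normalized i.i.d.\ sum in $\mathcal H_K$, check centering, get the almost-sure bound from $\|(L_K+\lambda I)^{-1/2}K_x\|_K\le\kappa/\sqrt{\lambda}$, get the variance bound from the operator identity $E[K_x\otimes K_x]=L_K$ so that the second moment is controlled by the effective dimension $\mathcal N(\lambda)$, and apply the concentration inequality of \cite{Pinelis1994}. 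The one point you should state more loudly is the centering step: the hypothesis $|y|\le M$ a.s.\ alone does not make $E[\eta_i]=0$; you need $f^*(x)=E[y\mid x]$, i.e.\ the noise in (\ref{model-2}) must be centered conditionally on $x$, not merely satisfy $E[\varepsilon_i]=0$ as the paper literally writes. You do invoke exactly this, and it is the intended reading (it is what \cite{Caponnetto2007} assumes and what the application in Theorem \ref{Theorem:Stability-via-eigenvalue} with $M=\|f^*\|_\infty+\gamma$ requires), so this is a presentational caveat rather than a gap in your argument.
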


The following lemma gives an upper bound for  $\mathcal U_{D,\lambda}$; see \cite[Lemma 18]{Lin2017}

\begin{lemma}\label{Lemma:functional-difference-out}
	Let $f$be a bounded function. For any $0<\delta<1$ and $\lambda>0$, with confidence at least
	$1-\delta,$ there holds
	$$
	\mathcal U_{D,\lambda,f} \leq \frac{\|f\|_\infty}{\kappa}\mathcal B_{m,\lambda}\log\frac{2}{\delta},
	$$
	where $\|f\|_\infty:=\sup_{x\in\mathcal X}|f(x)|$.
\end{lemma}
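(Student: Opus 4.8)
The plan is to recognize the quantity inside $\mathcal U_{D,\lambda,f}$ as a centered empirical average of i.i.d.\ Hilbert-space-valued random variables, to which a Bernstein-type concentration inequality applies. Recalling the definitions of $\mathcal L_{K,D}$ and $\mathcal L_K$ together with \eqref{integral-1} and \eqref{Def.U}, I would set $\xi_i := (L_K+\lambda I)^{-1/2} f(x_i)K_{x_i} \in \mathcal H_K$. Since the $x_i$ are drawn i.i.d.\ from $\rho_X$, the $\xi_i$ are i.i.d.\ with common mean $\mathbf{E}\xi = (L_K+\lambda I)^{-1/2}\int_{\mathcal X} f(x)K_x\,d\rho_X = (L_K+\lambda I)^{-1/2}\mathcal L_K f$. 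Consequently $(L_K+\lambda I)^{-1/2}(\mathcal L_{K,D}f - \mathcal L_K f) = \frac1m\sum_{i=1}^m \xi_i - \mathbf{E}\xi$, so that $\mathcal U_{D,\lambda,f}$ is precisely the $\mathcal H_K$-norm of this deviation, and the task reduces to a concentration estimate.

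To invoke the Bernstein inequality for $\mathcal H_K$-valued functions from \cite{Pinelis1994}, I would supply two moment estimates. For the almost-sure bound I would use $\|K_x\|_K^2 = K(x,x)\le\kappa^2$ together with $\|(L_K+\lambda I)^{-1/2}\|\le\lambda^{-1/2}$, giving $\|\xi_i\|_K \le \|f\|_\infty\,\kappa\,\lambda^{-1/2}=:\tilde M$. The more delicate ingredient is the second-moment bound: writing $\|\xi_i\|_K^2 = |f(x_i)|^2\langle(L_K+\lambda I)^{-1}K_{x_i}, K_{x_i}\rangle_K = |f(x_i)|^2\,\mathrm{Tr}[(L_K+\lambda I)^{-1}(K_{x_i}\otimes K_{x_i})]$ and taking expectation, I would use the identity $\mathbf{E}_{x\sim\rho_X}[K_x\otimes K_x] = L_K$ (which follows from $\mathcal L_K f = \int \langle f, K_x\rangle_K K_x\,d\rho_X$) to conclude $\mathbf{E}\|\xi_i\|_K^2 \le \|f\|_\infty^2\,\mathrm{Tr}[(L_K+\lambda I)^{-1}L_K] = \|f\|_\infty^2\,\mathcal N(\lambda)=:\tilde\sigma^2$, with $\mathcal N(\lambda)$ the effective dimension of \eqref{Effective-dimension}.

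With $\tilde M$ and $\tilde\sigma^2$ in hand, the Bernstein inequality yields, with confidence $1-\delta$,
$$
\mathcal U_{D,\lambda,f} \le 2\left(\frac{\tilde M}{m} + \frac{\tilde\sigma}{\sqrt m}\right)\log\frac2\delta = \frac{2\|f\|_\infty}{\sqrt m}\left(\frac{\kappa}{\sqrt{m\lambda}} + \sqrt{\mathcal N(\lambda)}\right)\log\frac2\delta,
$$
and factoring $\kappa$ out of $\mathcal B_{m,\lambda}$ in \eqref{Definition B} reproduces exactly $\frac{\|f\|_\infty}{\kappa}\mathcal B_{m,\lambda}\log\frac2\delta$. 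I expect the concentration step to be a black-box application once the two moments are controlled; the main obstacle lies in the second-moment computation, specifically in justifying $\mathbf{E}[K_x\otimes K_x] = L_K$ rigorously, which requires interpreting $L_K$ as a trace-class operator and legitimately exchanging expectation with the trace. Everything else is routine bookkeeping of the constants.
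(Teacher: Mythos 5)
Your proof is correct and matches the intended argument: identifying $\mathcal U_{D,\lambda,f}$ as the $\mathcal H_K$-norm of a centered empirical mean of the i.i.d.\ variables $\xi_i=(L_K+\lambda I)^{-1/2}f(x_i)K_{x_i}$, bounding $\|\xi_i\|_K\le\kappa\|f\|_\infty\lambda^{-1/2}$ and $\mathbf{E}\|\xi_i\|_K^2\le\|f\|_\infty^2\mathcal N(\lambda)$ (the latter identity can be checked without trace-class subtleties by expanding $\langle (L_K+\lambda I)^{-1}K_x,K_x\rangle_K=\sum_\ell \phi_\ell(x)^2/(\sigma_\ell+\lambda)$ and integrating term by term, since $\int\phi_\ell^2\,d\rho_X=\sigma_\ell$), and then applying the Pinelis--Bernstein inequality together with $\log(2/\delta)\ge 1/2$ to absorb the square root, which yields exactly $\frac{\|f\|_\infty}{\kappa}\mathcal B_{m,\lambda}\log\frac2\delta$. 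Note that the paper gives no proof of this lemma at all---it simply cites \cite[Lemma 18]{Lin2017}---and the proof in that reference is precisely this concentration argument, so your route coincides with the standard one.
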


We point out that all the above results require the presence of upper bounds of effective dimensions.
The following lemma
\cite[Corollary 2.2]{Mucke2018} (see also \cite[Lemma 21]{Lin2019}) 
features some relations between the effective dimension $\mathcal N(\lambda)$ and its empirical
counterpart $\mathcal N_D(\lambda)$.
\begin{lemma}\label{Lemma:Effective dimension}
	For any $0<\delta<1$ and $\lambda>0$, with
	confidence $1-\delta$, there holds
	\begin{eqnarray*}
		&&(1+4\eta_\delta)^{-1}
		\sqrt{\max\{\mathcal N(\lambda),1\}}
		\leq
		\sqrt{\max\{\mathcal N_D(\lambda),1\}}\\
		&\leq & (1+4\max\{\sqrt{\eta_\delta},\eta_\delta^2\})\sqrt{\max\{\mathcal
			N(\lambda),1\}},
	\end{eqnarray*}
	where $\eta_\delta:=2\log(4/\delta)/\sqrt{m\lambda}.$
\end{lemma}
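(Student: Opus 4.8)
The plan is to reduce the two-sided comparison to the control of a single self-adjoint, mean-zero, i.i.d.\ operator sum, and then to read off both inequalities from an operator-order domination together with one scalar trace concentration. First I would record the central fluctuation operator
\[
\Delta_\lambda:=(L_K+\lambda I)^{-1/2}(L_{K,D}-L_K)(L_K+\lambda I)^{-1/2}
=\frac1m\sum_{i=1}^m\xi_i-\mathbf E[\xi_1],
\]
where $\xi_i=(L_K+\lambda I)^{-1/2}K_{x_i}\otimes K_{x_i}(L_K+\lambda I)^{-1/2}$ are i.i.d.\ positive Hilbert--Schmidt operators under the sampling $\Xi=\{x_i\}_{i=1}^m$. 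Writing $\beta:=\|\Delta_\lambda\|$, the identity $L_{K,D}+\lambda I=(L_K+\lambda I)^{1/2}(I-\Delta_\lambda)(L_K+\lambda I)^{1/2}$ shows that $\beta<1$ produces the operator sandwich
\[
(1-\beta)(L_K+\lambda I)\preceq L_{K,D}+\lambda I\preceq(1+\beta)(L_K+\lambda I).
\]

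The main work, and the step I expect to be the principal obstacle, is the probabilistic bound on $\beta$. I would apply the Bernstein inequality for Hilbert-space-valued random variables of \cite{Pinelis1994} (the same tool already used for \eqref{bound.RD}) to the centered sum defining $\Delta_\lambda$. The two ingredients are the almost-sure bound $\|\xi_i\|\le\langle K_{x_i},(L_K+\lambda I)^{-1}K_{x_i}\rangle\le\kappa^2/\lambda$ and the variance control $\|\mathbf E[\xi_1^2]\|\le(\kappa^2/\lambda)\|\mathbf E[\xi_1]\|\le\kappa^2/\lambda$, using that $\mathbf E[\xi_1]=(L_K+\lambda I)^{-1/2}L_K(L_K+\lambda I)^{-1/2}$ has norm at most $1$. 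These feed into a confidence-$1-\delta$ estimate of the form $\beta\le\frac{2\kappa^2\log(4/\delta)}{3m\lambda}+\kappa\sqrt{\frac{2\log(4/\delta)}{m\lambda}}$. The delicate points are splitting the failure probability, enforcing $\beta<1$, and absorbing the $\kappa$-dependent constants so that the final multiplicative factors match the asserted $1+4\eta_\delta$ and $1+4\max\{\sqrt{\eta_\delta},\eta_\delta^2\}$; indeed the coexistence of $\sqrt{\eta_\delta}$ and $\eta_\delta^2$ reflects the variance-dominated (square-root) and bias-dominated (linear) regimes of the Bernstein tail, repackaged through $\eta_\delta=2\log(4/\delta)/\sqrt{m\lambda}$.

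Finally I would convert the operator sandwich into the effective-dimension comparison (recall \eqref{Effective-dimension}). Feeding $(L_{K,D}+\lambda I)^{-1}\preceq(1-\beta)^{-1}(L_K+\lambda I)^{-1}$ (and its reverse) into the trace, via the monotonicity $\mathrm{Tr}(AB)\le\mathrm{Tr}(A'B)$ for $A\preceq A'$ and $B\succeq0$ applied with $B=L_{K,D}$, yields
\[
(1+\beta)^{-1}\,\mathrm{Tr}\big((L_K+\lambda I)^{-1}L_{K,D}\big)\le\mathcal N_D(\lambda)\le(1-\beta)^{-1}\,\mathrm{Tr}\big((L_K+\lambda I)^{-1}L_{K,D}\big).
\]
The pivotal observation is that the common middle quantity is the \emph{non-random-kernel} scalar average $\frac1m\sum_i\langle K_{x_i},(L_K+\lambda I)^{-1}K_{x_i}\rangle$, whose expectation is exactly $\mathcal N(\lambda)$ and whose summands lie in $[0,\kappa^2/\lambda]$; a scalar Bernstein inequality therefore pins it to $\mathcal N(\lambda)$ up to a relative error of order $\eta_\delta$. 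This is what lets me avoid dominating $L_{K,D}$ directly by $L_K$ inside the trace, which would generate the divergent term $\lambda\,\mathrm{Tr}((L_K+\lambda I)^{-1})$. Combining the factor $(1\pm\beta)^{\mp1}$ with this trace concentration then produces the two multiplicative constants. The outer truncations $\max\{\cdot,1\}$ are harmless, since every estimate above is multiplicative and survives replacing each effective dimension by its maximum with $1$, which also keeps the statement meaningful when the raw effective dimension falls below one.
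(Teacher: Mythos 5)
The paper does not actually prove this lemma: it is imported verbatim from \cite{Mucke2018} (Corollary 2.2; see also Lemma 21 of \cite{Lin2019}). Your sketch reconstructs the same architecture as those proofs: normalize the fluctuation by $(L_K+\lambda I)^{-1/2}$ on both sides, control $\beta=\|\Delta_\lambda\|$ by the Hilbert-space Bernstein inequality of \cite{Pinelis1994}, and then compare traces through the mixed quantity $\mathrm{Tr}\bigl((L_K+\lambda I)^{-1}L_{K,D}\bigr)=\frac1m\sum_i\langle K_{x_i},(L_K+\lambda I)^{-1}K_{x_i}\rangle_K$, whose mean is exactly $\mathcal N(\lambda)$ and which concentrates by a scalar Bernstein bound. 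Your observation that one must route the comparison through this mixed trace, rather than dominate $L_{K,D}$ by $L_K$ inside the trace (which would produce the divergent $\lambda\,\mathrm{Tr}((L_K+\lambda I)^{-1})$), is indeed the crux of the argument. One typo: the factorization should read $L_{K,D}+\lambda I=(L_K+\lambda I)^{1/2}(I+\Delta_\lambda)(L_K+\lambda I)^{1/2}$, not $I-\Delta_\lambda$; this is immaterial since only $\|\Delta_\lambda\|$ enters the sandwich.

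Two points you flag as ``delicate'' are, however, genuinely load-bearing and remain open in your sketch. First, the $\kappa$-dependence: your Bernstein bounds necessarily give $\beta\lesssim\kappa\,\eta_\delta+\kappa^2\eta_\delta^2$ and a trace deviation with factors of $\kappa$, while the stated $\eta_\delta=2\log(4/\delta)/\sqrt{m\lambda}$ is $\kappa$-free. This cannot be fixed by cleverness: replacing $K$ by $cK$ with $\lambda$, $m$, $\delta$ fixed leaves $\eta_\delta$ unchanged but sends $\mathcal N(\lambda)\to\infty$ while $\mathcal N_D(\lambda)\leq m$, so the lower inequality fails for large $c$; the lemma as stated implicitly carries the normalization $\kappa\leq1$ in force in \cite{Mucke2018}, and your $\kappa$-dependent constants are the honest ones. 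Second, the regime where $\beta$ is not small: on the good event (with $\kappa\le1$) one gets roughly $\beta\leq\eta_\delta+\eta_\delta^2/2$, so the factor $(1-\beta)^{-1}$ in your upper comparison blows up already near $\eta_\delta\approx0.7$, whereas the claimed factor $\bigl(1+4\max\{\sqrt{\eta_\delta},\eta_\delta^2\}\bigr)^2$ is still moderate there; the crude fallback $\mathcal N_D\le m$ does not close this window either. This is precisely where the cited proofs work harder, using the second-order decomposition \eqref{Second-order-dec} of \cite{Guo2017} (the mechanism behind Lemma \ref{Lemma:Operator-product}) to control $\bigl\|(L_{K,D}+\lambda I)^{-1/2}(L_K+\lambda I)^{1/2}\bigr\|$ without a smallness condition on $\beta$, combined with a case split that produces the characteristic $\max\{\sqrt{\eta_\delta},\eta_\delta^2\}$ shape. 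So: right strategy, matching the source, but the boundary regime and the normalization are not details one can wave at --- they are where the stated constants are actually earned.
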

From the above lemmas, we can deduce the following error estimates for operator differences in terms of the eigenvalues of $\mathbb K$.

\begin{lemma}\label{multi-log-esti}
	Let $\alpha >0$ and $d \ge 1$ be given. Then for any $0 < \lambda \le 1,$ we have
	\[
	\int^\infty_0 \frac{dt}{1+ \lambda e^{\alpha\ t^{1/d}}} \le C_1\  \frac{d!}{\alpha^d}\ \log^d \frac{1}{\lambda},
	\]
	where $C_1$ is an absolute constant.
\end{lemma}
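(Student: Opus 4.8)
The plan is to reduce the integral to a standard incomplete-Gamma-type estimate by the change of variables $u = t^{1/d}$, so that $t = u^d$ and $\mathrm{d}t = d\,u^{d-1}\,\mathrm{d}u$. This turns the left-hand side into
\[
\int_0^\infty \frac{d\,u^{d-1}}{1 + \lambda e^{\alpha u}}\,\mathrm{d}u,
\]
in which the exponential factor $e^{\alpha u}$ now lives on a linear scale; this is what makes the polynomial weight $u^{d-1}$ tractable.

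Next I would split the range of integration at the threshold $T := \tfrac1\alpha \log\tfrac1\lambda \ge 0$, which is exactly the point where $\lambda e^{\alpha u} = 1$. On the bounded piece $[0,T]$ I discard the exponential via $1 + \lambda e^{\alpha u} \ge 1$ and integrate the monomial directly, contributing $\int_0^T d\,u^{d-1}\,\mathrm{d}u = T^d = \alpha^{-d}\log^d\tfrac1\lambda$. On the tail $[T,\infty)$ I instead use $1 + \lambda e^{\alpha u} \ge \lambda e^{\alpha u}$, so the integrand is dominated by $\tfrac{d}{\lambda}\,u^{d-1}e^{-\alpha u}$; shifting $v = u - T$ and noting $\tfrac1\lambda e^{-\alpha T} = 1$ eliminates $\lambda$ completely and leaves the clean quantity $d\int_0^\infty (v+T)^{d-1} e^{-\alpha v}\,\mathrm{d}v$.

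The key computation is to evaluate this tail integral. Expanding $(v+T)^{d-1}$ by the binomial theorem and applying $\int_0^\infty v^k e^{-\alpha v}\,\mathrm{d}v = k!/\alpha^{k+1}$ together with $\binom{d-1}{k}k! = (d-1)!/(d-1-k)!$, the tail should collapse, after reindexing $j = d-1-k$, to $\tfrac{d!}{\alpha^d}\sum_{j=0}^{d-1}\tfrac{1}{j!}\log^j\tfrac1\lambda$. Adding the $[0,T]$ contribution $T^d = \tfrac{d!}{\alpha^d}\cdot\tfrac{1}{d!}\log^d\tfrac1\lambda$ completes the truncated series, yielding $\tfrac{d!}{\alpha^d}\sum_{j=0}^{d}\tfrac{1}{j!}\log^j\tfrac1\lambda \le \tfrac{d!}{\alpha^d}\bigl(1+\log\tfrac1\lambda\bigr)^d$, where the last step uses the elementary bound $\binom dj \ge 1/j!$ and the binomial identity for $(1+\log\tfrac1\lambda)^d$.

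I expect the main obstacle to be the bookkeeping in the tail: one must verify that the binomial sum telescopes into a truncated exponential series and that, after combining with the $[0,T]$ term, every factorial and power of $\alpha$ aligns to produce $d!/\alpha^d$ times a power series in $\log\tfrac1\lambda$. A secondary point worth flagging is the regime $\lambda \to 1$: the estimate just sketched produces $(1+\log\tfrac1\lambda)^d$ rather than $\log^d\tfrac1\lambda$, and these agree up to an absolute constant only once $\lambda$ is bounded away from $1$ (so that $\log\tfrac1\lambda$ is bounded below by a positive constant), which is precisely the range in which the lemma is invoked. Absorbing the additive $1$ into $C_1$ over that range then gives the stated inequality.
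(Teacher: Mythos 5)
Your change of variables, the split at $T=\tfrac1\alpha\log\tfrac1\lambda$, and the exact evaluation of the tail are all correct, and your route is genuinely different from the paper's: the paper substitutes $u=\lambda e^{\alpha t^{1/d}}$ to rewrite the integral as $\tfrac{d}{\alpha^d}\int_\lambda^\infty \log^{d-1}(u/\lambda)\,[u(u+1)]^{-1}\,du$, splits via $\tfrac{1}{u(u+1)}=\tfrac1u-\tfrac1{u+1}$, and finishes with an induction on $d$ whose intermediate steps are only indicated at the level of $o(\cdot)$ and $O(\cdot)$ terms. Your computation is fully explicit and lands on the sharper intermediate bound
\begin{equation*}
\int_0^\infty\frac{dt}{1+\lambda e^{\alpha t^{1/d}}}\;\le\;\frac{d!}{\alpha^d}\sum_{j=0}^{d}\frac{1}{j!}\log^j\frac1\lambda .
\end{equation*}
You are also right to flag the regime $\lambda\to1$: at $\lambda=1$ the left-hand side is positive while the right-hand side vanishes, so the lemma as stated is false there regardless of $C_1$; this restriction is unavoidable, and the paper's own proof silently shares it.

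The genuine gap is your final absorption step. From $\sum_{j=0}^d\tfrac1{j!}\log^j\tfrac1\lambda\le\bigl(1+\log\tfrac1\lambda\bigr)^d$ you cannot recover $C_1\log^d\tfrac1\lambda$ with an \emph{absolute} constant $C_1$, even with $\log\tfrac1\lambda$ bounded below by a positive constant: writing $L=\log\tfrac1\lambda$, the ratio $(1+L)^d/L^d=(1+1/L)^d$ equals $2^d$ at $L=1$ (i.e.\ at $\lambda=e^{-1}$, certainly bounded away from $1$), and it stays bounded uniformly in $d$ only if $L$ grows proportionally to $d$, i.e.\ $\lambda\le e^{-cd}$ — not the regime of application, since Corollary \ref{Corollary:noise-less-rate-G} takes $\lambda=m^{-1}$ with no assumption $m\ge e^{cd}$. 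The dimension-free constant matters downstream: it is what allows $C_4$ in Proposition \ref{Proposition:uniform-spectrum} to depend only on $c_0$, with all $d$-dependence isolated in the explicit factor $\sqrt{d!}\,\alpha^{-d/2}$. Fortunately the repair is immediate from your own intermediate bound: bypass $(1+L)^d$ entirely and observe that for $L\ge1$ one has $L^j\le L^d$ for every $0\le j\le d$, whence
\begin{equation*}
\sum_{j=0}^{d}\frac{L^j}{j!}\;\le\;L^d\sum_{j=0}^{d}\frac{1}{j!}\;\le\;e\,L^d,
\end{equation*}
so the lemma holds with the absolute constant $C_1=e$ for all $0<\lambda\le e^{-1}$. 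With this one-line substitution your argument is complete, and it is cleaner and more quantitative than the proof in the paper.
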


\begin{proof}
	By substituting $u=\lambda e^{\alpha\ t^{1/d}}$, we have
	\[
	\int^\infty_0 \frac{dt}{1+ \lambda e^{\alpha\ t^{1/d}}} =
	\frac{d}{\alpha^d}\  \int^\infty_\lambda \frac{\log^{d-1}\frac{u}{\lambda}}{u(u+1)}du.
	\]
	We denote $L(\lambda)$ the integral on the right hand side of the above equation (without the constant $\frac{d}{\alpha^d}$). For an $M \ge \lambda,$ write 
	\[
	L(\lambda, M):=\int^M_\lambda \frac{\log^{d-1}\frac{u}{\lambda}}{u(u+1)}du
	=I_{0,0}(\lambda, M) - I_{1,1}(\lambda, M)+ I_{1,1}(\lambda, M) - I_{0,1}(\lambda, M),
	\]
	in which
	\[
	I_{i,j}(\lambda, M):= \int^M_\lambda \frac{\log^{d-1}\frac{u+i}{\lambda}}{u+j}du, \quad i,j = 0,1.
	\]
	For $d \ge 2$, we make use of the inequality $\log (1 + x) \le x \;( x \ge 0$) to write
	\begin{align*}
	0 \le & \log^{d-1}\frac{u+1}{\lambda}-\log^{d-1}\frac{u}{\lambda} \\
	\le & \frac1u \sum^{d-2}_{i=0} \log^{d-i-2}\frac{u}{\lambda}\ \log^{i}\frac{u+1}{\lambda} \\
	\le &  \frac{d-1}{u} \log^{d-2}\frac{u+1}{\lambda}.
	\end{align*}
	It follows that
	\[
	I_{1,1}(\lambda, M) - I_{0,1}(\lambda, M) \le (d-1)
	\int^\infty_\lambda \frac{\log^{d-2}\frac{u+1}{\lambda}}{u(1+u)}du.
	\]
	Note that the integral on the right hand side of the above inequality is of the order $\circ\left( L(\lambda)\right).$ It remains to estimate $I_{0,0}(\lambda, M) - I_{1,1}(\lambda, M)$. To this end, we write
	\begin{align*}
	&I_{0,0}(\lambda, M) - I_{1,1}(\lambda, M) \\
	=   & \log^d\frac{u+1}{\lambda}-\log^d \frac{u}{\lambda} \\
	\le & C \log^d \frac1\lambda - O \left( \frac1M \log^{d-1} \frac{M}{\lambda}\right).
	\end{align*}
	Letting $M \rightarrow \infty$, we get the desired result. A mathematical induction argument shows that the constant (depending on $d$) is of the order $d!$.
\end{proof}

\begin{lemma}\label{Lemma:Q}
	Let
	$\delta\in(0,1)$ and $\lambda>0$. If $\Lambda_D$ is identically and independently drawn according to $\rho_X$ and $|y|\le M$  almost surely, then with confidence $1-\delta$, there holds
	\begin{eqnarray*}
		\mathcal Q_{D,\lambda}
		&\leq&
		\sqrt{2}   (2\kappa(\kappa+8)\mathcal A_{D,\lambda}+1)    \log^2\frac8\delta,\\
		\mathcal W_{D,\lambda}
		&\leq&
		2\kappa(\kappa+8)\sqrt{\lambda}\mathcal A_{D,\lambda} \log^2 \frac8\delta,\\
		\mathcal P_{D,\lambda}
		&\leq&
		4M(\kappa+8)\sqrt{\lambda}\mathcal A_{D,\lambda}\log^2\frac{8}{\delta},\\
		\mathcal U_{D,\lambda,f}
		&\leq&
		2\frac{\|f\|_\infty(\kappa+8)}{\kappa}\sqrt{\lambda}\mathcal A_{D,\lambda}\log^2\frac{8}{\delta},
	\end{eqnarray*}
	where $\mathcal A_{D,\lambda}$ is defined by (\ref{Def.A}).
\end{lemma}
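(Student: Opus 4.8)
The plan is to observe that the four quantities $\mathcal Q_{D,\lambda}$, $\mathcal W_{D,\lambda}$, $\mathcal P_{D,\lambda}$, $\mathcal U_{D,\lambda,f}$ are \emph{already} controlled in Appendix B by the population effective dimension $\mathcal N(\lambda)$ through Lemmas~\ref{Lemma:Operator-product},~\ref{Lemma:operator-difference},~\ref{Lemma:functional-difference} and~\ref{Lemma:functional-difference-out}, so that the only new work is to trade $\mathcal N(\lambda)$ for the empirical quantity $\mathcal N_D(\lambda)$, which is exactly what $\mathcal A_{D,\lambda}$ encodes. Indeed, as recorded in the proof of Proposition~\ref{Proposition:uniform-spectrum}, $\mathcal N_D(\lambda)={\rm Tr}[(\lambda m I+\mathbb K)^{-1}\mathbb K]=\sum_\ell \sigma_{\ell,\mathbb K}/(\lambda m+\sigma_{\ell,\mathbb K})$ and hence $\mathcal A_{D,\lambda}=(\tfrac1{m\lambda}+\tfrac1{\sqrt{m\lambda}})\sqrt{\max\{\mathcal N_D(\lambda),1\}}$. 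The engine driving everything is Lemma~\ref{Lemma:Effective dimension}, whose left inequality rearranges to $\sqrt{\mathcal N(\lambda)}\le\sqrt{\max\{\mathcal N(\lambda),1\}}\le(1+4\eta_\delta)\sqrt{\max\{\mathcal N_D(\lambda),1\}}$ with confidence $1-\delta/2$ (applying it at level $\delta/2$, so $\eta_\delta=2\log(8/\delta)/\sqrt{m\lambda}$).

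First I would establish a single \emph{master inequality}: with the above confidence,
\[
\frac{\mathcal B_{m,\lambda}}{\sqrt\lambda}\le 2\kappa(\kappa+8)\,\mathcal A_{D,\lambda}\log\frac8\delta .
\]
Substituting $\mathcal B_{m,\lambda}=\tfrac{2\kappa}{\sqrt m}\bigl(\tfrac{\kappa}{\sqrt{m\lambda}}+\sqrt{\mathcal N(\lambda)}\bigr)$ and dividing by $\sqrt\lambda$ splits the left side into $\tfrac{2\kappa^2}{m\lambda}$ and $\tfrac{2\kappa\sqrt{\mathcal N(\lambda)}}{\sqrt{m\lambda}}$. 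Using the elementary facts $\tfrac1{m\lambda}\le\mathcal A_{D,\lambda}$ and $\tfrac{1}{\sqrt{m\lambda}}\sqrt{\max\{\mathcal N_D(\lambda),1\}}\le\mathcal A_{D,\lambda}$ (both because the prefactor $\tfrac1{m\lambda}+\tfrac1{\sqrt{m\lambda}}$ dominates either summand while $\max\{\mathcal N_D,1\}\ge1$), the first term is at most $2\kappa^2\mathcal A_{D,\lambda}$. For the second, I insert the Lemma~\ref{Lemma:Effective dimension} bound and expand $1+4\eta_\delta$: the ``$1$'' contributes $2\kappa\mathcal A_{D,\lambda}$, while the correction carries a further $1/\sqrt{m\lambda}$, turning $\tfrac{\sqrt{\max\{\mathcal N_D,1\}}}{\sqrt{m\lambda}}$ into $\tfrac{\sqrt{\max\{\mathcal N_D,1\}}}{m\lambda}\le\mathcal A_{D,\lambda}$ and contributing $16\kappa\log(8/\delta)\mathcal A_{D,\lambda}$. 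Collecting gives $2\kappa(\kappa+1+8\log(8/\delta))\mathcal A_{D,\lambda}$, and since $\log(8/\delta)>\log 8>2$ for $0<\delta<1$ a short comparison folds $\kappa+1$ into the factor $(\kappa+8)\log(8/\delta)$.

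Granting the master inequality, each of the four claims follows by direct substitution into the matching Appendix B lemma, applied with confidence $1-\delta/2$ so that a union bound with Lemma~\ref{Lemma:Effective dimension} keeps the total confidence at $1-\delta$ (the halving is what produces $\log(8/\delta)$ from $\log(4/\delta)$ and $\log(4/\delta)$ from $\log(2/\delta)$). I feed the master bound into Lemma~\ref{Lemma:Operator-product} for $\mathcal Q_{D,\lambda}$, into Lemma~\ref{Lemma:operator-difference} for $\mathcal W_{D,\lambda}$, into Lemma~\ref{Lemma:functional-difference} (carrying the hypothesis $|y|\le M$) for $\mathcal P_{D,\lambda}$, and into Lemma~\ref{Lemma:functional-difference-out} for $\mathcal U_{D,\lambda,f}$. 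The weight bookkeeping matches: $\mathcal Q$ is governed by $\mathcal B_{m,\lambda}/\sqrt\lambda$, giving the advertised $(2\kappa(\kappa+8)\mathcal A_{D,\lambda}+1)$ form, whereas $\mathcal W,\mathcal P,\mathcal U$ are governed by $\mathcal B_{m,\lambda}=\sqrt\lambda\cdot(\mathcal B_{m,\lambda}/\sqrt\lambda)$, producing the common $\sqrt\lambda\,\mathcal A_{D,\lambda}$ weight. In each case the residual logarithms combine via $\log(2/\delta)\le\log(4/\delta)\le\log(8/\delta)$ and $\log(8/\delta)\le\log^2(8/\delta)$ to yield the stated $\log^2(8/\delta)$.

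The genuinely delicate point is the treatment of the correction factor $\eta_\delta$ inside the master inequality. A priori $\eta_\delta$ is \emph{not} a constant: it carries $1/\sqrt{m\lambda}$, which may be large, so one cannot simply bound $1+4\eta_\delta$ by a universal number. The decisive observation is that every appearance of $\eta_\delta$ comes multiplied by another $1/\sqrt{m\lambda}$, so the product collapses to a clean $1/(m\lambda)$ that $\mathcal A_{D,\lambda}$ already dominates; this absorption is precisely what legitimizes replacing the inaccessible $\mathcal N(\lambda)$ by the computable empirical $\mathcal N_D(\lambda)$, and hence by the raw spectrum of $\mathbb K$. Everything past this absorption is routine constant tracking together with the single union bound, so I expect no conceptual obstacle beyond it.
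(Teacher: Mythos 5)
Your proposal is correct and takes essentially the same route as the paper: the paper's proof also first establishes the master inequality $\mathcal B_{m,\lambda}\le 2\kappa\sqrt{\lambda}\,\mathcal A_{D,\lambda}\log\frac{8}{\delta}$ with confidence $1-\delta/2$ (its (\ref{Bound.B}), obtained from Lemma~\ref{Lemma:Effective dimension}, (\ref{Definition B}) and (\ref{Effective-dimension}) exactly as you do, though stated there with the $(\kappa+8)$ factor omitted), and then plugs it into Lemmas~\ref{Lemma:Operator-product}, \ref{Lemma:operator-difference}, \ref{Lemma:functional-difference} and~\ref{Lemma:functional-difference-out} with the same $\delta/2$ splitting and $\log\frac4\delta\le\log\frac8\delta$ absorption. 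Your constant tracking is in fact more explicit than the paper's one-line assertion of (\ref{Bound.B}); the only microscopic caveat is that folding $\kappa+1$ into $(\kappa+8)\log\frac8\delta$ requires $\kappa$ to be at least about $1$, a looseness in the constants that the paper itself shares.
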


\begin{proof}
	From Lemma \ref{Lemma:Effective dimension}, (\ref{Definition B}) and (\ref{Effective-dimension}),   with confidence $1-\delta/2$, there holds
	\begin{eqnarray}\label{Bound.B}
	\mathcal B_{m,\lambda}
	\leq
	2\kappa \sqrt{\lambda}\mathcal A_{D,\lambda}\log\frac{8}\delta.
	\end{eqnarray}
	Plugging  (\ref{Bound.B}) into Lemma \ref{Lemma:Operator-product}, we have that
	$$
	\mathcal Q_{D,\lambda}\leq
	\sqrt{2}  \left( 2\kappa \mathcal A_{D,\lambda}+1 \right)   \log^2\frac8\delta
	$$
	holds with confidence $1-\delta$. The other bounds are derived similarly by inserting
	(\ref{Bound.B}) into Lemma \ref{Lemma:operator-difference}, Lemma \ref{Lemma:functional-difference} and Lemma \ref{Lemma:functional-difference-out}, respectively. This completes the proof of Lemma \ref{Lemma:Q}.
\end{proof}

\section*{Appendix C: Some Geometric Properties of the Random Sampling}
In this part of the article, we derive miscellaneous probabilistic and deterministic estimates for $q_{_\Xi}$.
\begin{lemma}\label{Lemma:mesh-separation}
	Let $\Xi:=\{x_i\}_{i=1}^m$ be i.i.d. drawn according to the uniform distribution on $\mathcal X$.   Then for any $t>0$, 
	\begin{eqnarray}\label{seperate-radius-prob}
	P(q_{_\Xi}\geq t)\geq 1-\frac{m^2\pi^{d/2}}{2{\rm Vol}{ (\mathcal X)}\Gamma(d/2+1)}t^d 
	\end{eqnarray}
	where ${\rm Vol}(\mathbb A)$ denotes the volume of the set $\mathbb A$ and $\Gamma$ is the Gamma function.
\end{lemma}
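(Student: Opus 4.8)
The plan is to pass to the complementary event $\{q_{_\Xi}<t\}$ and control it by a union bound over pairs, reducing the whole statement to a single elementary geometric‑probability estimate. The key observation is that, by the definition $q_{_\Xi}=\tfrac12\min_{j\neq k}\|x_j-x_k\|$, the separation radius drops below $t$ precisely when some pair of sample points lies closer than $2t$; that is,
\[
\{q_{_\Xi}<t\}=\bigcup_{1\le j<k\le m}\bigl\{\|x_j-x_k\|<2t\bigr\}.
\]
First I would apply the union bound and exploit that all pairs are identically distributed, giving
\[
P(q_{_\Xi}<t)\le\sum_{1\le j<k\le m}P\bigl(\|x_j-x_k\|<2t\bigr)=\binom{m}{2}\,P\bigl(\|x_1-x_2\|<2t\bigr).
\]

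Next I would estimate the single‑pair probability by conditioning on the position of one point. Since $x_1,x_2$ are independent and uniform on $\mathcal X$, conditioning on $x_2$ and integrating yields
\[
P\bigl(\|x_1-x_2\|<2t\bigr)=\frac{1}{\mathrm{Vol}(\mathcal X)}\int_{\mathcal X}\frac{\mathrm{Vol}\bigl(\mathcal X\cap B(x_2,2t)\bigr)}{\mathrm{Vol}(\mathcal X)}\,dx_2,
\]
where $B(x_2,2t)$ is the Euclidean ball of radius $2t$. Bounding the intersection crudely by the full ball, $\mathrm{Vol}(\mathcal X\cap B(x_2,2t))\le\mathrm{Vol}(B(x_2,2t))=\frac{\pi^{d/2}}{\Gamma(d/2+1)}(2t)^{d}$, makes the integrand constant, so the estimate collapses to
\[
P\bigl(\|x_1-x_2\|<2t\bigr)\le\frac{\pi^{d/2}}{\Gamma(d/2+1)\,\mathrm{Vol}(\mathcal X)}\,(2t)^{d}.
\]

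Finally I would combine the two displays with $\binom{m}{2}\le m^2/2$ and pass to the complement to reach the right‑hand side of \eqref{seperate-radius-prob}. Honestly, there is no serious obstacle here: the lemma is an elementary first‑moment (union‑bound) argument, and the only inequality with real content is the crude replacement of $\mathrm{Vol}(\mathcal X\cap B(x_2,2t))$ by the volume of the whole ball, which is what makes the bound uniform in $x_2$ even when $x_2$ lies near the (Lipschitz) boundary of $\mathcal X$, where the true intersection is smaller. The one point to track carefully is the factor of two relating $q_{_\Xi}$ to the minimal pairwise distance: it is exactly this conversion that fixes the radius of the ball and hence pins down the constant in \eqref{seperate-radius-prob}.
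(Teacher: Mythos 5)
Your strategy is in essence the same as the paper's proof of Lemma~\ref{Lemma:mesh-separation} --- a union bound plus the crude estimate $\mathrm{Vol}(\mathcal X\cap B)\le\mathrm{Vol}(B)$ --- the only cosmetic difference being that you take the union over the $\binom{m}{2}$ pairs while the paper takes it over the $m$ points. Every display you write is correct, but your final sentence is not: the two displays do \emph{not} combine to give the right-hand side of \eqref{seperate-radius-prob}. Since $q_{_\Xi}<t$ forces a pair at distance $<2t$, the relevant ball has radius $2t$, and your own bounds yield
\[
P(q_{_\Xi}<t)\;\le\;\binom{m}{2}\,\frac{\pi^{d/2}(2t)^{d}}{\Gamma(d/2+1)\,\mathrm{Vol}(\mathcal X)}\;\le\;\frac{m^{2}\,\pi^{d/2}\,2^{d}\,t^{d}}{2\,\Gamma(d/2+1)\,\mathrm{Vol}(\mathcal X)},
\]
which exceeds the bound claimed in the lemma by the factor $2^{d}$. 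The factor of two that you rightly flag as ``the one point to track carefully'' enters the volume raised to the $d$-th power; since the constant in \eqref{seperate-radius-prob} is explicit, this discrepancy cannot be absorbed.

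Moreover, this is not a slip you can repair: the lemma as printed is false, and your honest bookkeeping is precisely what exposes it. Take $d=1$, $\mathcal X=[0,1]$, $m=3$: then $P(q_{_\Xi}<t)=P\bigl(\min_{j\ne k}|x_j-x_k|<2t\bigr)=1-(1-4t)^{3}=12t+O(t^{2})$, whereas \eqref{seperate-radius-prob} asserts the bound $9t$ (using $\Gamma(3/2)=\sqrt{\pi}/2$), so the stated inequality fails for all small $t$. The paper's own proof reaches the printed constant only by working with balls of radius $t$ around each point, which controls the event $\min_{j\ne k}\|x_j-x_k\|\le t$, i.e.\ $q_{_\Xi}\le t/2$ rather than $q_{_\Xi}\le t$; in other words, it silently drops the factor $\tfrac12$ in the definition of $q_{_\Xi}$ (and an unexplained factor $2$ also appears in a denominator midway through). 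So what you have written is a correct proof of the corrected statement in which $t^{d}$ is replaced by $(2t)^{d}$, and it is that version which should be used wherever the lemma is invoked (e.g.\ together with \eqref{stability-for-Gaussian} and \eqref{stability-for-Sobolev-111}, where the extra $2^{d}$ is harmless); but as a proof of the lemma exactly as stated, the last step is a genuine gap --- one that the paper's own argument shares.
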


\begin{proof}  For each fixed $i=1,\dots,m$, let $B_t(x_i)$ be the ball with center $x_i$ and radius $t$. Let $E_i$ denote the event 
	that there is none  $j \ne i$ such that $x_j\in  B_t(x_i)$. We have that
	\[
	{\rm Vol.}(B_t(x_i)) = \frac{\pi^{d/2}}{\Gamma(d/2+1)}t^d,
	\]
	and therefore that
	\[
	P(E_i)= \left(1-\frac{\pi^{d/2}}{{\rm Vol.}{(\mathcal X)}\Gamma(d/2+1)}t^d\right)^{m-1}.
	\]
	It then follows that
	$$
	P(q_{_\Xi}\leq t)\leq P\left(\bigcup\limits^m_{i=1}E_i \right) \leq
	m\left[1-\left(1-\frac{\pi^{d/2}}{2{\rm Vol}{ (\mathcal X)}\Gamma(d/2+1)}t^d\right)^{m-1}\right].
	$$
	Using the inequality $(1-a)^b\geq 1-ba, \; \forall\ 0 \le a < 1, \; b \ge 0$, we derive
	$$
	P(q_{_\Xi}\leq t)\leq \frac{m^2\pi^{d/2}}{2{\rm Vol}{ (\mathcal X)}\Gamma(d/2+1)}t^d.
	$$
	This completes the proof of Lemma \ref{Lemma:mesh-separation}.  
\end{proof}

The result of the following lemma  is a direct consequence of Lemma A.2 and Corollary A.2 of \cite{Elkaroui2010}.
\begin{lemma}\label{Lemma:quadratic-type}
	Let $M>0$ and $\xi_i,\xi_j\in[-M,M]^d$ be  i.i.d. random vectors satisfying $E[\xi_i]=0$, $E[\xi_i^2]=\sigma^2$. 
	If
	\begin{equation}\label{d0}
	d>d_0:=\frac{2048\exp(4\pi)M^2}{\sigma^2},
	\end{equation}
	then we have
	\begin{eqnarray}\label{first}
	|\|\xi_i\|^2_2-d\sigma^2|\leq\frac{\sigma^2d}4,\qquad \mbox{and}\quad |\xi_i^T \xi_j|\leq\frac{\sigma^2d}4,\qquad \forall i,j=1,\dots,m,
	\end{eqnarray}
	with confidence at least
	\begin{eqnarray}\label{confidence}
	1-8\exp(4\pi)\left[\exp\left(-\frac{d}{96M^2}\right)+\exp\left(-\frac{ \sigma^2d}{5824M^2}\right)\right].
	\end{eqnarray}
	
\end{lemma}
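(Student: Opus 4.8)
The plan is to recognize both quantities in \eqref{first} as sums of $d$ independent, bounded, scalar random variables and to control each by a Bernstein-type concentration inequality, which is exactly what \cite[Lemma A.2 and Corollary A.2]{Elkaroui2010} supply. Writing the coordinates of $\xi_i$ as $\xi_{i,1},\dots,\xi_{i,d}$, I would first record the two decompositions $\|\xi_i\|_2^2 = \sum_{k=1}^d \xi_{i,k}^2$ and $\xi_i^T\xi_j = \sum_{k=1}^d \xi_{i,k}\xi_{j,k}$. In the first sum the summands $\xi_{i,k}^2$ are i.i.d., lie in $[0,M^2]$, and have common mean $\sigma^2$, so the sum has mean $d\sigma^2$; in the second sum (with $i\neq j$) the summands $\xi_{i,k}\xi_{j,k}$ are i.i.d., lie in $[-M^2,M^2]$, and have mean $0$. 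Thus the two events in \eqref{first} are precisely deviation events for these sums at the common level $t = \sigma^2 d/4$.

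For the first estimate I would apply the concentration bound of \cite[Lemma A.2]{Elkaroui2010} to $\sum_{k=1}^d(\xi_{i,k}^2 - \sigma^2)$, exploiting the per-coordinate bound $\xi_{i,k}^2\le M^2$ together with the variance proxy $\mathrm{Var}(\xi_{i,k}^2)\le M^2\sigma^2$; once the deviation is fixed at $\sigma^2 d/4$ this produces a tail of the form $\exp(-d/(96M^2))$. For the second estimate I would invoke \cite[Corollary A.2]{Elkaroui2010} on the centered sum $\sum_{k=1}^d \xi_{i,k}\xi_{j,k}$, whose tail at the level $\sigma^2 d/4$ yields the term $\exp(-\sigma^2 d/(5824M^2))$. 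The prefactor $8\exp(4\pi)$ is carried along from El Karoui's inequalities, where it originates in the truncation and moment constants of that reference. The hypothesis $d>d_0 = 2048e^{4\pi}M^2/\sigma^2$ is exactly the largeness condition on $d$ relative to the dispersion ratio $M^2/\sigma^2$ under which those inequalities are valid, and it simultaneously forces the resulting confidence \eqref{confidence} to be strictly positive.

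Finally I would assemble the pieces by a union bound over the two failure events, which produces the additive structure $\exp(-d/(96M^2)) + \exp(-\sigma^2 d/(5824M^2))$ inside the bracket of \eqref{confidence}; the uniform-over-indices statement ``$\forall\, i,j$'' is subsumed in El Karoui's entrywise-maximum formulation, so no extra $m$-dependent factor appears. The conceptual content is therefore entirely routine, and the work reduces to correctly matching the two cited tail bounds to the deviation parameter $\sigma^2 d/4$.

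The step I expect to be the genuine obstacle is reproducing El Karoui's precise constants. Passing from a clean Bernstein exponent to the specific values $96$, $5824$, and the awkward prefactor $8\exp(4\pi)$ requires following the truncation-and-moment argument of \cite{Elkaroui2010} faithfully, and in particular verifying that $d_0$ is the sharp threshold at which the fixed deviation $\sigma^2 d/4$ enters the admissible regime of both inequalities. Everything else follows directly from the two cited results.
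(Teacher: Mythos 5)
Your proposal takes essentially the same route as the paper: the paper offers no independent argument at all, stating only that the lemma ``is a direct consequence of Lemma A.2 and Corollary A.2 of \cite{Elkaroui2010},'' which is precisely the reduction you describe (coordinate-wise decomposition of $\|\xi_i\|_2^2$ and $\xi_i^T\xi_j$, the two cited tail bounds at level $\sigma^2 d/4$, and a union bound over the two events). One small caution: your claim that the ``$\forall\, i,j$'' comes for free with no $m$-dependent factor is at odds with how the paper itself uses the lemma --- in Lemma \ref{Lemma:separation-for-gaussian} the confidence is multiplied by $m^2$ when the bound is imposed on all pairs, so the stated confidence should be read as per-pair.
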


\begin{lemma}\label{Lemma:separation-for-gaussian}
	Let $M>0$ and $\Xi=\{x_i\}_{i=1}^m\subset [-M,M]^d$ be a set of i.i.d. random vectors satisfying $E[x_i]=0$, $E[x_i^2]=\sigma^2$, $i=1,\dots,m$. Assume that inequality (\ref{d0}) holds true. Then we have
	\begin{equation}\label{separation-for-gaussian}
	q_{_\Xi} \geq  \frac{\sigma\sqrt{d}}2,
	\end{equation}
	with confidence at least
	\begin{equation}\label{confidence-Gaussian}
	1- 8m^2\exp(4\pi)\left[\exp\left(-\frac{d}{96M^2}\right)+\exp\left(-\frac{ \sigma^2d}{5824M^2}\right)\right].
	\end{equation}
\end{lemma}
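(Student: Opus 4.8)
The plan is to translate the lower bound on $q_{_\Xi}$ into a uniform lower bound on the pairwise squared distances $\|x_i-x_j\|_2^2$, control each such distance through the concentration estimates of Lemma~\ref{Lemma:quadratic-type}, and close the argument with a union bound over all pairs. First I would recall the defining relation $q_{_\Xi}=\frac12\min_{j\neq k}\|x_j-x_k\|_2$, so that \eqref{separation-for-gaussian} is equivalent to the assertion that $\|x_i-x_j\|_2\geq\sigma\sqrt{d}$ holds simultaneously for all $i\neq j$. Writing $\|x_i-x_j\|_2^2=\|x_i\|_2^2+\|x_j\|_2^2-2\,x_i^Tx_j$ reduces the task to obtaining lower bounds on the norms $\|x_i\|_2^2$ together with an upper bound on the cross terms $x_i^Tx_j$.

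Next I would invoke Lemma~\ref{Lemma:quadratic-type} with $\xi_i=x_i$, which is legitimate precisely because hypothesis \eqref{d0} is assumed. For a fixed pair this yields $\|x_i\|_2^2\geq d\sigma^2-\frac{\sigma^2d}{4}=\frac{3\sigma^2 d}{4}$ (and likewise for $\|x_j\|_2^2$) together with $x_i^Tx_j\leq\frac{\sigma^2 d}{4}$, each on an event of probability at least the quantity in \eqref{confidence}. Substituting these into the expansion gives
\[
\|x_i-x_j\|_2^2\geq \frac{3\sigma^2 d}{4}+\frac{3\sigma^2 d}{4}-2\cdot\frac{\sigma^2 d}{4}=\sigma^2 d,
\]
whence $\|x_i-x_j\|_2\geq\sigma\sqrt{d}$ on that event, which is exactly the per-pair bound we need.

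Finally I would take a union bound. There are fewer than $m^2$ events in play---the $m$ norm bounds and the $\binom{m}{2}$ cross-term bounds---each failing with probability at most $8\exp(4\pi)\left[\exp\left(-\frac{d}{96M^2}\right)+\exp\left(-\frac{\sigma^2 d}{5824M^2}\right)\right]$ by \eqref{confidence}. Summing these failure probabilities produces a complementary bound of $8m^2\exp(4\pi)\left[\cdots\right]$, which is exactly the confidence \eqref{confidence-Gaussian}. I expect the only delicate point to be the bookkeeping in this union bound: one must check that the single-pair failure probability supplied by Lemma~\ref{Lemma:quadratic-type} genuinely controls each of the $\mathcal{O}(m^2)$ constituent events, and that estimating their number crudely by $m^2$ (rather than $m+\binom{m}{2}$) suffices to absorb all of them while leaving the stated constant intact. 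The algebra combining the three concentration inputs is routine once they are in hand, so the substance of the argument is entirely contained in the reduction and the union-bound count.
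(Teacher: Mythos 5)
Your proposal is correct and follows essentially the same route as the paper's own proof: expand $\|x_i-x_j\|_2^2$ into norm and cross terms, apply the concentration bounds \eqref{first} from Lemma~\ref{Lemma:quadratic-type} to each pair, and conclude with a union bound over the at most $m^2$ pairs to obtain the confidence \eqref{confidence-Gaussian}. The only cosmetic difference is that the paper bounds $2d\sigma^2-\|x_i-x_j\|_2^2$ from above rather than bounding $\|x_i-x_j\|_2^2$ from below directly, which is an equivalent rearrangement of the same algebra.
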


\begin{proof}
	For each pair of $1\leq i \ne j\leq m$,   we have
	\begin{eqnarray}\label{separation-1}
	&&2d\sigma^2 -\|x_i-x_j\|_2^2
	=2d\sigma^2 -(x_i^T-x_j^T)(x_i-x_j)\nonumber\\
	&=&
	d\sigma^2  -\|x_i\|_2^2+ d\sigma^2 -\|x_j\|_2^2+x_i^T x_j+x_j^T x_i.
	\end{eqnarray}
	Plugging (\ref{first}) into (\ref{separation-1}), we obtain that
	$   2d\sigma^2 -\|x_i-x_j\|_2^2\leq  d\sigma^2,$
	for each pair of $i \ne j$, with confidence at least
	$$
	1-8\exp(4\pi)\left[\exp\left(-\frac{d}{96M^2}\right)+\exp\left(-\frac{ \sigma^2d}{5824M^2}\right)\right],
	$$
	which implies that with the same amount of confidence, we have
	$$
	\|x_i-x_j\|_2\geq  \sigma\ \sqrt{d}, \quad 1 \le i \ne j \le  m.
	$$
	Considering all such pairs of $1 \le i \ne j \le  m$, we derive
	that $ q_{_\Xi} \geq  {\displaystyle \frac{\sigma\sqrt{d}}2},
	$
	with confidence at least
	$$
	1- 8m^2\exp(4\pi)\left[\exp\left(-\frac{d}{96M^2}\right)+\exp\left(-\frac{ \sigma^2d}{5824M^2}\right)\right],
	$$
	which is the desired result.  
\end{proof}


\bibliographystyle{siamplain}
\bibliography{interpolation}
\end{document}